\numberwithin{equation}{section}
\newtheorem{theorem}{Theorem}
\newtheorem{lemma}{Lemma}
\newtheorem{remark}{Remark}
\newtheorem{proposition}{Proposition}
\newtheorem{corollary}{Corollary}
\newtheorem{question}[theorem]{Question}
\begin{document}
\title[Salem numbers in complex dynamics on threefolds]
{Salem numbers in dynamics on K\"ahler threefolds 
and complex tori}
\author{Keiji Oguiso} 
 \author{Tuyen Trung Truong}
\address{Department of Mathematics, Osaka University, Toyonaka 560-0043, Osaka, Japan and Korea Institute for Advanced Study, Hoegiro 87, Seoul, 133-722, Korea}
 \email{oguiso@math.sci.osaka-u.ac.jp}   
       \address{Department of Mathematics, Syracuse University, Syracuse, NY 13244, USA}
 \email{tutruong@syr.edu}
 
\thanks{}
    \date{May 2013: preliminary version, September 2013: polished version}
    \keywords{Complex Tori, Dynamical degrees, Galois theory, Pseudo-automorphisms, Salem numbers.}
    \subjclass[2000]{37F99, 32H50.}
    \begin{abstract} Let $X$ be a compact K\"ahler manifold of dimension $k\leq 4$ and $f:X\rightarrow X$ a pseudo-automorphism. If the first dynamical degree $\lambda _1(f)$ is a Salem number, we show that either $\lambda _1(f)=\lambda _{k-1}(f)$ or $\lambda _1(f)^2=\lambda _{k-2}(f)$. In particular, if $\mbox{dim}(X)=3$ then $\lambda _1(f)=\lambda _2(f)$. We use this to show that if $X$ is a complex $3$-torus and $f$ is an automorphism of $X$ with $\lambda _1(f)>1$, then $f$ has a non-trivial equivariant holomorphic fibration if and only if $\lambda _1(f)$ is a Salem number. If $X$ is a complex $3$-torus having an automorphism $f$ with $\lambda _1(f)=\lambda _2(f)>1$ but is not a Salem number, then the Picard number of $X$ must be $0,3$ or $9$, and all these cases can be realized.
\end{abstract}
\maketitle
\section{Introduction}       
\label{SectionIntroduction} 

Given $X$ a compact K\"ahler manifold and $f:X\rightarrow X$ a dominant meromorphic map. It is interesting to see whether $f$ has a non-trivial invariant meromorphic fibration. More precisely, the following question was asked by many people 

Question 1: Let $X$ be a compact K\"ahler manifold and $f:X\rightarrow X$ a dominant meromorphic map. Is there a compact K\"ahler manifold $Y$ with $0<dim (Y)<dim (X)$, and dominant meromorphic maps $\pi :X\rightarrow Y$ and $g:Y\rightarrow Y$ such that $\pi \circ f=g\circ \pi$?

D.-Q. Zhang called a map having no non-trivial meromorphic fibration a primitive map. Diller-Favre \cite{diller-favre} classified Question 1 in the case $X$ is a surface and $f$ is bimeromorphic map. For a dominant meromorphic map $f:X\rightarrow X$, there are associated bimeromorphic invariants called dynamical degrees (see Subsection \ref{SubsectionDynamicalDegrees} for definition of this). By the works in \cite{dinh-nguyen} and \cite{dinh-nguyen-truong},  having a non-trivial invariant meromorphic fibration put some constraints on dynamical degrees of $f$. For example, if $X$ has dimension $3$ and $f$ is pseudo-automorphic (we recall here that $f$ is pseudo-automorphic if it is bimeromorphic and both $f$ and $f^{-1}$ have no exceptional divisors, see e.g. \cite{dolgachev-ortland}) then we necessarily have $\lambda _1(f)=\lambda _2(f)$, and if moreover $\pi$ is holomorphic and $dim(Y)=2$ then $\lambda _1(f)$ is a Salem number (see Lemma \ref{LemmaPseudoAutomorphisSalemNumber}). As a first step toward resolving Question 1, we may ask whether the reverse of this is true, that is:

Question 2: Let $X$ be a compact K\"ahler manifold of dimension $3$ and $f:X\rightarrow X$ a pseudo-automorphism with $\lambda _1(f)>1$.

i) If $\lambda _1(f)=\lambda _2(f)$, does $f$ have a non-trivial invariant meromorphic fibration?

ii) If $\lambda _1(f)$ is a Salem number, does $f$ have a non-trivial invariant meromorphic fibration?

Our first main result below shows that for pseudo-automorphisms $f$ in dimensions $3$ and $4$, that $\lambda _1(f)$ being a Salem number has an interesting dynamical consequence. 
\begin{theorem}
Let $f:X \rightarrow X$ be a pseudo-automorphism of a compact K\"ahler manifold. Assume that $\lambda _1(f)$ is a Salem number. 

1) If $\mbox{dim}(X)=4$  then either $\lambda _1(f)=\lambda _{3}(f)$ or $\lambda _1(f)^2=\lambda _{2}(f)$. 

2) If $\mbox{dim}(X)=3$ then $\lambda _1(f)=\lambda _2(f)$.
\label{TheoremSalemNumberAgain}\end{theorem}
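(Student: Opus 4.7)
The strategy rests on the algebraic rigidity that $\lambda_1(f)$ being Salem imposes on the spectrum of $f^{*}|_{H^{2}(X,\mathbb{Z})}$, combined with log-concavity $\lambda_{k-1}(f)\lambda_{k+1}(f)\le\lambda_{k}(f)^{2}$ and Poincar\'e duality $\lambda_{k}(f)=\lambda_{n-k}(f^{-1})$ for pseudo-automorphisms. Since $f$ is pseudo-automorphic, $f^{*}$ acts unimodularly on $H^{2}(X,\mathbb{Z})$, so its characteristic polynomial is an integer polynomial; the Salem polynomial $S(t)$ of $\lambda_{1}(f)$ must divide it, and all Galois conjugates of $\lambda_{1}(f)$ --- namely $1/\lambda_{1}(f)$ and the unit-circle pairs $\alpha_{i},\bar{\alpha}_{i}$ --- appear as eigenvalues of $f^{*}$ on $H^{2}$. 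Using the Hodge decomposition together with reality of $\lambda_{1}(f)$ and $1/\lambda_{1}(f)$, both lift to real eigenvectors $\omega,\omega'\in H^{1,1}(X,\mathbb{R})$ with eigenvalues $\lambda_{1}(f)$ and $1/\lambda_{1}(f)$ respectively.

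For part (1) (dimension 4), I would split on the vanishing of $\omega^{2}\in H^{2,2}(X,\mathbb{R})$. If $\omega^{2}\ne 0$, it is an eigenvector of $f^{*}$ on $H^{2,2}$ with eigenvalue $\lambda_{1}(f)^{2}$, so $\lambda_{2}(f)\ge\lambda_{1}(f)^{2}$; the log-concavity bound $\lambda_{0}(f)\lambda_{2}(f)\le\lambda_{1}(f)^{2}$ then forces equality $\lambda_{2}(f)=\lambda_{1}(f)^{2}$. If instead $\omega^{2}=0$, all higher cup powers $\omega^{k}$ vanish, and the target becomes $\lambda_{3}(f)=\lambda_{1}(f)$. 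Poincar\'e duality gives $\lambda_{3}(f)=\lambda_{1}(f^{-1})$, and the reciprocal-eigenvalue reasoning (since $1/\lambda_{1}(f)$ is an eigenvalue of $f^{*}|_{H^{1,1}}$, $\lambda_{1}(f)$ is an eigenvalue of $(f^{-1})^{*}|_{H^{1,1}}$) yields $\lambda_{3}(f)\ge\lambda_{1}(f)$. The reverse inequality is extracted from the log-concavity chain $\lambda_{1}(f)\lambda_{3}(f)\le\lambda_{2}(f)^{2}\le\lambda_{1}(f)^{4}$ combined with the fact that, when $\omega^{2}=0$, the eigenvalue $\lambda_{1}(f)^{2}$ cannot be realised on $H^{2,2}$ via the Salem eigenspace --- so $\lambda_{2}(f)<\lambda_{1}(f)^{2}$, tightening the bound to $\lambda_{3}(f)=\lambda_{1}(f)$.

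For part (2) (dimension 3), I would reduce to part (1) by passing to the product pseudo-automorphism $g=f\times\mathrm{id}_{E}\colon X\times E\to X\times E$, where $E$ is an elliptic curve. Since $\mathrm{Exc}(f)\times E$ has codimension $\ge 2$ in $X\times E$, $g$ is a pseudo-automorphism of a K\"ahler $4$-fold, and the product formula gives $\lambda_{1}(g)=\lambda_{1}(f)$, $\lambda_{2}(g)=\lambda_{2}(f)$, $\lambda_{3}(g)=\lambda_{2}(f)$. Part (1) applied to $g$ yields either $\lambda_{1}(f)=\lambda_{2}(f)$, which is the desired conclusion, or $\lambda_{2}(f)=\lambda_{1}(f)^{2}$. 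To rule out the latter: $\lambda_{2}(f)=\lambda_{1}(f)^{2}$ forces (via Poincar\'e duality) $\lambda_{1}(f)^{2}$ to be an eigenvalue of the integer matrix $(f^{-1})^{*}|_{H^{2}(X,\mathbb{Z})}$, hence every Galois conjugate of $\lambda_{1}(f)^{2}$ is too; these include $\alpha_{i}^{2}$ for each unit-circle Salem root $\alpha_{i}$, so $\bar{\alpha}_{i}^{2}$ is an eigenvalue of $f^{*}|_{H^{2}}$. But the Galois orbit of $\bar{\alpha}_{i}^{2}$ over $\mathbb{Q}$ contains $\lambda_{1}(f)^{2}$, which would then have to be an eigenvalue of $f^{*}|_{H^{2}}$, contradicting $\lambda_{1}(f)$ being the spectral radius.

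The main obstacle is the $\omega^{2}=0$ branch of part (1): the straightforward powers-of-eigenvector construction breaks down, and one has to force $\lambda_{1}(f^{-1})\le\lambda_{1}(f)$ without knowing a priori that $\lambda_{1}(f^{-1})$ is Salem; the characteristic polynomial of $f^{*}|_{H^{2}(X,\mathbb{Z})}$ need not be reciprocal, and any small-modulus eigenvalues of $f^{*}$ outside the Salem orbit would, upon inversion, threaten to inflate $\lambda_{1}(f^{-1})$ beyond $\lambda_{1}(f)$. The delicate point is showing that the $\omega^{2}=0$ hypothesis, through log-concavity, confines $\lambda_{2}(f)$ to a range too small to accommodate such an inflation.
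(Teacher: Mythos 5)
Your proposal takes a genuinely different route from the paper --- cup powers of the dominant eigenvector and case-splitting on $\omega^2$ --- but it has a gap that affects \emph{both} branches of part (1), not just the $\omega^2=0$ branch you flag as the obstacle.

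The step ``if $\omega^{2}\ne 0$, it is an eigenvector of $f^{*}$ on $H^{2,2}$ with eigenvalue $\lambda_{1}(f)^{2}$'' needs $f^*(\omega\cup\omega)=f^*\omega\cup f^*\omega$, i.e.\ multiplicativity of the pullback on cohomology. This is true for automorphisms, but it fails in general for pseudo-automorphisms: $f^*$ is defined on cohomology by $\pi_{1*}\circ\pi_2^*$ on a resolution $\Gamma$ of the graph, and while the holomorphic pullback $\pi_2^*$ respects cup product, the pushforward $\pi_{1*}$ does not. Indeed, the failure of this multiplicativity is precisely what makes the dynamical degrees $\lambda_p(f)$ a genuinely new invariant (rather than just $\lambda_1(f)^p$) for meromorphic maps, and it is the whole reason the statement is non-trivial. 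So the $\omega^2\ne 0$ branch does not yield $\lambda_2(f)\ge\lambda_1(f)^2$, and the $\omega^2=0$ branch is --- by your own account --- not closed either. Since part (2) is deduced from part (1) by the product reduction, it does not go through either. (A minor additional slip there: for $g=f\times\mathrm{id}_E$ one has $\lambda_2(g)=\max\{\lambda_1(f),\lambda_2(f)\}$, not $\lambda_2(f)$; the dichotomy survives this correction, but it is not what you wrote.)

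The missing idea is the spectral-gap theorem the paper imports from \cite{truong3} (Theorem 1 there), applied to $f^{-1}$: if $\lambda_1(h)^2>\lambda_2(h)$ then $h^*|_{H^{1,1}}$ has exactly one eigenvalue of modulus greater than $\sqrt{\lambda_2(h)}$. Since the Salem hypothesis puts $1/\lambda_1(f)$ among the eigenvalues of $f^*|_{H^{1,1}}$, the operator $(f^{-1})^*|_{H^{1,1}}$ has both $\lambda_1(f^{-1})=\lambda_3(f)$ and $\lambda_1(f)$ as eigenvalues; if moreover $\lambda_1(f)^2>\lambda_2(f)=\lambda_2(f^{-1})$, the spectral gap forces these two to coincide. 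This is exactly the control your $\omega^2=0$ branch was missing, and it avoids any appeal to multiplicativity. Part (2) then follows from the same two-step dichotomy together with Poincar\'e duality on a threefold ($\lambda_2(f^{-1})=\lambda_1(f)$), which immediately kills the $\lambda_1(f)^2=\lambda_2(f^{-1})$ branch, rather than via a product with an elliptic curve.
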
 
Note that since $f$ is pseudo-automorphic, $\lambda _1(f)$ is the spectral radius of  the linear map $f^*:H^{1,1}(X)\rightarrow H^{1,1}(X)$ (see Lemma \ref{LemmaPseudoAutomorphismCompatibility}). Since $f^*$ preserves the cone psef cohomology classes, it follows from Perron-Frobenius theorem that $\lambda _1(f)$ is an eigenvalue of $f^*:H^{1,1}(X)\rightarrow H^{1,1}(X)$.  

In connection with Question 2, Bedford-Kim \cite{bedford-kim4} constructed an example of a pseudo-automorphism  $ f: X \to  X$, where $X$ is a composition of point blowups of ${\mathbf P}^3$.  This map has no non-trivial invariant meromorphic fibration, but its dynamical degrees have the property that $\lambda_1(f) = \lambda_2(f) $ is a Salem number strictly greater than 1.  However,
our next main result shows that the answer to Question 2 ii) is affirmative for the case $X$ is a complex $3$-tori and $\pi$ is an equivariant holomorphic fibration. 
\begin{theorem}
Let $X$ be a complex $3$-torus and $f \in {\rm Aut}\, (X)$ with $\lambda _1(f)>1$. If $\lambda_1(f)$ is a Salem number, then $X$ admits a non-trivial $f$-equivariant holomorphic fibration .
\label{TheoremToriAgain}\end{theorem}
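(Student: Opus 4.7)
The plan is to produce a proper nonzero $A$-invariant complex subspace $W \subset \mathbb{C}^3$ that is rational with respect to the period lattice $\Lambda \subset \mathbb{C}^3$, where $A \in GL(\mathbb{C}^3)$ is the linear lift of $f$ to the universal cover (any automorphism of a complex torus is linear up to translation). Any such $W$ determines an $f$-invariant complex subtorus of $X$ whose quotient provides the required equivariant holomorphic fibration. By part (2) of Theorem~\ref{TheoremSalemNumberAgain} we have $\lambda_1(f) = \lambda_2(f) = s$; combining this with $\lambda_3(f) = |\det A|^2 = 1$ and the standard formula for dynamical degrees of a torus automorphism in terms of the moduli of the complex eigenvalues of $A$, one finds, after reordering the eigenvalues $\alpha_1, \alpha_2, \alpha_3$ of $A$ on $\mathbb{C}^3$, that $|\alpha_1|^2 = s$, $|\alpha_2|^2 = 1$, $|\alpha_3|^2 = 1/s$.

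Let $P(x) \in \mathbb{Z}[x]$ be the characteristic polynomial of $A$ on $H^1(X, \mathbb{Z}) \cong \mathbb{Z}^6$; its roots are $\alpha_i, \bar\alpha_i$ for $i = 1, 2, 3$. I claim it suffices to show $P$ is reducible over $\mathbb{Q}$. Given a nontrivial factorization $P = P_1 P_2$ in $\mathbb{Z}[x]$, the subspace $W_0 := \ker P_1(A) \subset H^1(X, \mathbb{Q})$ is a proper nonzero $A$-invariant $\mathbb{Q}$-subspace. Since $P_1 \in \mathbb{R}[x]$ its complexification is closed under complex conjugation; and since the generalized $A$-eigenspaces in $H^{1,0}(X)$ and $H^{0,1}(X)$ attached to any common eigenvalue intersect trivially (the distinct moduli $\sqrt{s}, 1, 1/\sqrt{s}$ force each eigenvalue of $A|_{H^{1,0}}$ to have multiplicity $1$), the complexification of $W_0$ splits as a direct sum of its $(1,0)$ and $(0,1)$ parts. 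Hence $W_0$ is a sub-$\mathbb{Q}$-Hodge structure, yielding the desired fibration.

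Now suppose for contradiction that $P$ is irreducible. Since $P$ is separable over $\mathbb{Q}$ (characteristic zero), its six roots are pairwise distinct, so $\alpha_i \neq \bar\alpha_i$ and no $\alpha_i$ is real. The number field $K := \mathbb{Q}(\alpha_1)$ therefore has degree $6$ with all six embeddings into $\mathbb{C}$ non-real, i.e., signature $(0, 3)$. On the other hand, $s = \alpha_1 \bar\alpha_1$ lies in $\mathbb{Q}(\alpha_1, \bar\alpha_1)$ and is by hypothesis a Salem number of some even degree $2d \geq 4$, so its Galois conjugates lie in $\{s, 1/s\}$ or strictly on the unit circle. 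Each Galois conjugate of $s$ equals $\sigma(\alpha_1)\sigma(\bar\alpha_1)$ for some $\sigma$, hence is a product of two distinct roots of $P$; an explicit enumeration shows that exactly six pairs of roots of $P$ yield a product of modulus $s$, $1/s$, or on the unit circle and distinct from $\pm 1$, forcing $2d \leq 6$ and thus $\deg s \in \{4, 6\}$.

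The main obstacle is to rule out both values of $\deg s$. The cleanest subcase is $\bar\alpha_1 \in K$: then $K \supset \mathbb{Q}(s)$, so $\deg s$ divides $6$, forcing $\deg s = 6$ and $\mathbb{Q}(s) = K$. But the Salem field $\mathbb{Q}(s)$ of degree $6$ has signature $(2, 2)$ (its two real embeddings being $s \mapsto s$ and $s \mapsto 1/s$), contradicting the signature $(0, 3)$ of $K$. In the remaining subcase $\bar\alpha_1 \notin K$, the compositum $\mathbb{Q}(\alpha_1, \bar\alpha_1)$ has degree $12$ over $\mathbb{Q}$ and carries a nontrivial involution given by complex conjugation; to exclude $\deg s = 6$ and $\deg s = 4$ I plan to track carefully which elements of the Galois group can realize each Galois conjugate of $s$ as a product of specific pairs of roots of $P$, exploiting the rigidity imposed by the Salem structure and the compatibility between the Galois action and complex conjugation. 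This final, intricate Galois-theoretic analysis is the principal difficulty.
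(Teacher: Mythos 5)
Your reduction is the same one the paper uses: you reduce to showing that the characteristic polynomial $P$ of $f_*$ on $H^1(X,\mathbf Z)$ is reducible over $\mathbf Q$, noting (as the paper does via Theorem~\ref{admissible} and Proposition~\ref{twofibrations}) that any nontrivial integral $f_*$-stable factor produces an $f$-admissible submodule and hence the required fibration. Your verification that a rational $A$-invariant subspace is automatically a sub-Hodge structure, because the eigenvalue multiplicities are all $1$ by the modulus pattern $\sqrt s, 1, 1/\sqrt s$, is sound and matches Lemma~\ref{basic}~3).

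However, the proof is not complete. You split the irreducibility contradiction into the two subcases $\overline{\alpha_1}\in K=\mathbf Q(\alpha_1)$ and $\overline{\alpha_1}\notin K$, handle the first by a signature comparison (a genuinely different and quite clean argument from the paper's), and then leave the second as ``the principal difficulty,'' offering only a plan (``I plan to track carefully\ldots''). That second subcase is in fact the generic one: as the paper's Theorem~\ref{TheoremGaloisGroupNonProjectiveTori} shows, the Galois group of $P$ typically has order $12$, $24$, or $48$, so the stabilizer of $\alpha_1$ is nontrivial and $\overline{\alpha_1}$ need not lie in $\mathbf Q(\alpha_1)$. A signature argument does not transfer readily, because one would now be comparing $\mathbf Q(s)$ (of degree $4$ or $6$) against the compositum $\mathbf Q(\alpha_1,\overline{\alpha_1})$, whose degree and archimedean type are not pinned down by the hypotheses alone. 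The paper's Lemma~\ref{galois} sidesteps this case split entirely: working inside the Galois group $G_f$ of the splitting field, it assumes $\sigma(\alpha)=\beta$ for some $\sigma\in G_f$, sets $x=\sigma(\alpha\overline{\alpha})$, and uses the Salem property to force $x\in\mathcal S_2\setminus\{1\}$ while a short modulus computation (with the two cases $\alpha\in\mathbf R$, $\alpha\notin\mathbf R$) shows no element of $\mathcal S_2\setminus\{1\}$ has the required modulus. You should replace your open-ended Galois bookkeeping by an argument of that type; as it stands your proposal establishes the result only when $\overline{\alpha_1}\in\mathbf Q(\alpha_1)$.
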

{\bf Remarks}.  

1) We will prove a stronger conclusion: $X$ admits $f$-equivariant holomorphic fibrations both over a complex $1$-torus $Y_1$ and complex $2$-torus $Y_2$ such that $\lambda_1(f) = \lambda_1(g_2)$ for the induced automorphism $g_2$ of $Y_2$.

2) The converse of Theorem \ref{TheoremToriAgain} is also true, see Corollary \ref{CorollaryNonTrivialFibrationDimension3}.

Our last two main results show that even in the category of complex $3$-tori equipped with automorphisms and equivariant holomorphic fibrations, the answer to Question 2 i) is negative. However the answer is affirmative if some  constraints on the Picard number $\rho$ of the torus $X$ are satisfied.
\begin{theorem}
1) There is a projective $3$-torus $X$ with Picard number $9$ and has an automorphism $f:X\rightarrow X$ with $\lambda _1(f)=\lambda _2(f)>1$ but $f$ has no non-trivial equivariant holomorphic fibrations.  

2) There are non-projective $3$-torus $X$ with Picard number $0$ or $3$ having an automorphism $f:X\rightarrow X$ with $\lambda _1(f)=\lambda _2(f)>1$ but $f$ has no non-trivial equivariant holomorphic fbrations.
\label{TheoremExamples}\end{theorem}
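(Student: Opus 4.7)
The plan is to construct explicit examples in each of the three cases. In every case, a complex $3$-torus $X$ arises from a lattice $\Lambda\cong\mathbb{Z}^{6}$ and a complex structure $J$ on $\Lambda_{\mathbb{R}}$, and an automorphism $f$ of $X$ is induced by a matrix $A\in GL(\Lambda)$ commuting with $J$. If the eigenvalues of $A$ on $V:=H^{1,0}(X)$ are $\alpha_{1},\alpha_{2},\alpha_{3}$, then $\lambda_{1}(f)=(\max_{j}|\alpha_{j}|)^{2}$ and $\lambda_{2}(f)=|\alpha_{i}\alpha_{j}|^{2}$ where $|\alpha_{i}|,|\alpha_{j}|$ are the two largest moduli. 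Combined with $|\alpha_{1}\alpha_{2}\alpha_{3}|=1$ (from $|\det A|=1$), the assumption $\lambda_{1}(f)=\lambda_{2}(f)>1$ forces the six eigenvalues of $A$ on $\Lambda_{\mathbb{R}}$ to have moduli $\{r,r,1,1,r^{-1},r^{-1}\}$ for some $r>1$, with $\lambda_{1}(f)=r^{2}$. By Theorem~\ref{TheoremToriAgain}, the absence of a non-trivial $f$-equivariant holomorphic fibration forces $r^{2}$ to be \emph{not} a Salem number, a condition on the minimal polynomial of $r^{2}$ over $\mathbb{Q}$ which must be verified in each construction.

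The absence of an $f$-equivariant fibration amounts to the non-existence of a proper non-zero $\mathbb{Q}$-subspace of $\Lambda_{\mathbb{Q}}$ that is both $A$-stable and $J$-stable. In each of the three cases I would try to arrange that the characteristic polynomial $P(x)=\det(xI-A)\in\mathbb{Z}[x]$ is $\mathbb{Q}$-irreducible of degree $6$, so that the only $A$-stable $\mathbb{Q}$-subspaces of $\Lambda_{\mathbb{Q}}$ are $0$ and $\Lambda_{\mathbb{Q}}$, and primitivity of $f$ is automatic regardless of $J$. For part (1), in order to force $\rho(X)=9$, I would take $X=E^{3}$ with $E=\mathbb{C}/\mathbb{Z}[i]$, so that $\mathrm{End}(X)\otimes\mathbb{Q}=M_{3}(\mathbb{Q}(i))$; an automorphism is then a matrix $A\in GL_{3}(\mathbb{Z}[i])$ whose characteristic polynomial $Q(x)\in\mathbb{Z}[i][x]$ of degree $3$ is $\mathbb{Q}(i)$-irreducible with root moduli $\{r,1,r^{-1}\}$, chosen so that the norm $P(x)=Q(x)\bar{Q}(x)$ is $\mathbb{Q}$-irreducible of degree $6$ (the generic situation). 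For part (2), I would pick any $\mathbb{Q}$-irreducible reciprocal $P(x)\in\mathbb{Z}[x]$ of degree $6$ with root moduli $\{r,r,1,1,r^{-1},r^{-1}\}$ and let $A$ be, say, the companion matrix.

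For the Picard-number values in part (2), once $P$ is $\mathbb{Q}$-irreducible of degree $6$ with three pairs of complex-conjugate non-real roots, the commutant $\mathbb{R}[A]\cong\mathbb{C}^{3}$ of $A$ in $M_{6}(\mathbb{R})$ determines a finite family of $2^{3}$ complex structures on $\Lambda_{\mathbb{R}}$ commuting with $A$. For each such $J$, $\rho(X)$ equals the rank of the lattice of integer alternating forms on $\Lambda$ of type $(1,1)$ with respect to $J$, i.e.\ those whose complexification lies in $V^{*}\otimes\bar{V}^{*}\subset\wedge^{2}(\Lambda_{\mathbb{C}})^{*}$. For a sufficiently generic $P$ with large Galois group, this rank is $0$, yielding the $\rho=0$ example; for a more special $P$ such that $\mathbb{Q}(A)$ contains a non-trivial CM sub-algebra compatible with $J$, exactly three independent type-$(1,1)$ integer alternating forms appear, giving $\rho=3$. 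The main obstacle throughout is to exhibit explicit polynomials (and, for part (1), a matrix over $\mathbb{Z}[i]$) that simultaneously achieve the modulus pattern $\{r,r,1,1,r^{-1},r^{-1}\}$, the non-Salem condition on $r^{2}$, the irreducibility needed for primitivity, and the prescribed Picard number; this amounts to careful coefficient-selection and Galois-theoretic bookkeeping in each case.
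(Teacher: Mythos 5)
Your overall framework is correct: translate the absence of an equivariant fibration into $\mathbf{Q}$-irreducibility of the degree-$6$ characteristic polynomial $P(x)$ (this is exactly the paper's Theorem \ref{DoubleFibration} 2)), note that the modulus pattern $\{r,r,1,1,r^{-1},r^{-1}\}$ is forced by $\lambda_1=\lambda_2>1$ and $|\det A|=1$, and observe that Theorem \ref{TheoremToriAgain} rules out $\lambda_1(f)$ being Salem. Your observation that $\mathbb{R}[A]\cong\mathbb{C}^3$ gives exactly $8$ complex structures matches the paper's Remark \ref{StadardRemark}. And the idea of taking $X = E^3$ with $E$ a CM elliptic curve for part (1) is a genuinely different, and in one respect cleaner, route than the paper's: for $E^3$ the identity $\rho(E^3)=9$ is a classical fact, so projectivity and the Picard number would come for free, whereas the paper builds $X$ from the module $\mathbb{Z}[t]/(p(t))$ (Proposition \ref{Standard}) and must then prove projectivity via Galois descent and a density argument to produce an integral K\"ahler class, followed by a separate count to get $\rho = 9$.

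However, the proposal stops where the real work begins. Theorem \ref{TheoremExamples} is an existence statement, and the burden of proof is to actually exhibit the objects. You never produce a single polynomial, let alone verify irreducibility, the modulus pattern, or the "not a Salem number'' condition, nor the existence of a suitable $A\in GL_3(\mathbb{Z}[i])$ with $\mathbb{Q}(i)$-irreducible cubic $Q$ whose roots have moduli $\{r,1,r^{-1}\}$ and whose $\mathbf{Q}$-norm is irreducible of degree $6$. You explicitly flag this as "the main obstacle,'' which is another way of saying the argument is incomplete. For comparison, the paper exhibits $p(t)=t^6+3t^5+5t^4+5t^3+5t^2+3t+1$ (built from $z_i/z_j$ where $z_i$ are the roots of $t^3-t-1$) and $p(t)=t^6-5t^5+13t^4-11t^3+13t^2-5t+1$ for part 1), and $p(t)=t^6+t^5+3t^4+t^3+3t^2+t+1$ for the $\rho=0$ case of part 2), and checks each required property.

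The sharpest gap is in part (2), on distinguishing $\rho=0$ from $\rho=3$. You appeal to "sufficiently generic $P$ with large Galois group'' for $\rho=0$ and to "a CM sub-algebra compatible with $J$'' for $\rho=3$, but give no criterion that would let one verify either in a concrete case, nor any argument that both values are achievable. The paper's route is quite specific and not at all generic: it first classifies the possible Galois groups of special degree-$6$ polynomials (Theorem \ref{TheoremGaloisGroupNonProjectiveTori}, finding $G_{48}$, $G_{24}$, $H_{24}$, $G_{12}$, $S_3$ up to $S_6$-conjugacy) and computes their orbits on $\wedge^2{\mathcal S}$ (Lemma \ref{LemmaOrbitGroupActions}); the Picard number is then read off from which orbits sit inside the $(1,1)$-part. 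In particular the $\rho=3$ example is obtained by \emph{re-using the same polynomial} $t^6+3t^5+5t^4+5t^3+5t^2+3t+1$ as in the projective case, but choosing a different ordering of roots (hence a different complex structure $J$) so that the "projectivity condition'' (AP) fails. Your sketch gives no mechanism to find this, and "generic vs.\ special'' is not the right dichotomy: the same $p(t)$ yields $\rho=9$ or $\rho=3$ depending solely on $J$. Without the Galois-orbit analysis, the step from your setup to the actual Picard-number computation does not go through.

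Finally, one small technical point: you argue that $P$ $\mathbf{Q}$-irreducible of degree $6$ forces primitivity "regardless of $J$.'' This is correct (it matches Theorem \ref{DoubleFibration}), but it also silently requires that the six roots be non-real; if $p(t)$ is irreducible with the modulus pattern $\{r,r,1,1,r^{-1},r^{-1}\}$, irreducibility rules out $\pm1$ on the unit circle, but the roots of modulus $r,r^{-1}$ could a priori be real, and then the standard construction of a complex $3$-torus from $\mathbb{Z}[t]/(p(t))$ breaks down. The paper's condition (A) explicitly imposes non-reality; you should too.
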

 
\begin{theorem}
Let $X$ be a complex $3$-tori having an automorphism $f:X\rightarrow X$ with $\lambda _1(f)=\lambda _2(f)>1$ but $f$ has no non-trivial equivariant holomorphic fibrations. 

1) If $X$ is projective then the Picard number of $X$ must be $9$. 

2) If $X$ is non-projective then the Picard number of $X$ must be $0$ or $3$. 
\label{TheoremPicardNumber}
\end{theorem}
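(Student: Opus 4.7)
The plan is as follows. Lift $f$ to a $\mathbb{C}$-linear automorphism $F$ of the universal cover $V \cong \mathbb{C}^3$ preserving the lattice $\Lambda \subset V$, and label its complex eigenvalues $\alpha_1, \alpha_2, \alpha_3$ so that $|\alpha_1| \geq |\alpha_2| \geq |\alpha_3|$. Since $F$ preserves $\Lambda$, its real determinant $|\det_\mathbb{C} F|^2$ equals $\pm 1$, giving $|\alpha_1 \alpha_2 \alpha_3| = 1$. Combining this with the standard formulas $\lambda_1(f) = |\alpha_1|^2$ and $\lambda_2(f) = |\alpha_1 \alpha_2|^2$ obtained from $H^{1,1}(X) = V^* \otimes \bar V^*$ and $H^{2,2}(X) = \wedge^2 V^* \otimes \wedge^2 \bar V^*$, the hypothesis $\lambda_1(f) = \lambda_2(f) > 1$ forces $|\alpha_1| > 1$, $|\alpha_2| = 1$ and $|\alpha_3| = 1/|\alpha_1|$. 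Let $Q(x) \in \mathbb{Z}[x]$ be the characteristic polynomial of $f^*$ on $H^1(X,\mathbb{Z}) \cong \mathbb{Z}^6$; write $K \subset \mathbb{C}$ for its splitting field, $G = \mathrm{Gal}(K/\mathbb{Q})$, and $c \in G$ for complex conjugation (which swaps $\alpha_i$ with $\bar\alpha_i$).

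I would then set up the following dictionary. An $f$-equivariant holomorphic fibration $X \to Y$ with $0 < \dim Y < 3$ corresponds to a proper nonzero $F$-invariant complex subspace $V' \subsetneq V$ such that $V' \oplus \overline{V'} \subset H_1(X, \mathbb{C})$ descends to an $f^*$-invariant $\mathbb{Q}$-subspace of $H_1(X, \mathbb{Q})$. If $V'$ is the sum of the eigenspaces for a subset $S \subsetneq \{\alpha_1, \alpha_2, \alpha_3\}$, this descent condition is precisely that $S \cup c(S)$ be $G$-stable inside the set of six roots of $Q$. Thus the no-fibration hypothesis translates into: for every proper nonempty $S$, the set $S \cup c(S)$ fails to be $G$-stable. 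Combining with the converse of Theorem \ref{TheoremToriAgain} (Corollary \ref{CorollaryNonTrivialFibrationDimension3}), we also obtain that $\lambda_1(f) = \alpha_1 \bar\alpha_1$ is not a Salem number.

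The core of the argument is then a Galois-theoretic classification. The joint constraints (i) $|\alpha_1| > 1 = |\alpha_2| > |\alpha_3|$ with the prescribed $c$-action; (ii) no proper nonempty $S \cup c(S)$ is $G$-stable; and (iii) $\alpha_1 \bar\alpha_1$ is not Salem, restrict $(G, \text{action on the six roots})$ to a short finite list. For each surviving Galois type I would compute $NS(X) \otimes \mathbb{Q} = H^2(X,\mathbb{Q}) \cap H^{1,1}(X,\mathbb{R})$ directly, using $H^2(X,\mathbb{Q}) = \wedge^2 H^1(X,\mathbb{Q})$ and the fact that $f^*$ acts on $H^{1,1}$ with eigenvalues $\alpha_i \bar\alpha_j$; the Picard number is the dimension of the Galois-invariant $\mathbb{Q}$-rational part of the $(1,1)$-eigenspace. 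Finally, projectivity is equivalent to the existence of a positive $(1,1)$-class in $NS(X) \otimes \mathbb{R}$, which is read off from the Hodge-theoretic signature in each case. The outcome should be that only $\rho = 9$ (forcing projectivity) and $\rho = 0$ or $3$ (forcing non-projectivity) occur.

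The main obstacle will be the Galois enumeration and the verification that the non-Salem restriction, combined with the no-fibration condition, rules out every Picard number outside $\{0, 3, 9\}$. Concretely, after listing the transitive (or nearly transitive) subgroups of $S_6$ compatible with the involution $c$ and with the moduli pattern in (i), one must, for each case, compute the Galois orbits of the nine numbers $\alpha_i \bar\alpha_j$ (which determine $\rho$) and the signature of the Hodge intersection form on $NS(X) \otimes \mathbb{R}$ (which determines projectivity), and match these against the statement.
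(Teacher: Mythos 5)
Your proposal follows the paper's route quite closely: read off the eigenvalue moduli from $\lambda_1(f)=\lambda_2(f)>1$, translate ``no equivariant fibration'' into irreducibility of the degree-$6$ characteristic polynomial $Q$ (equivalently, no proper $G$-stable $c$-symmetric subset of the roots), and then compute the Picard number via Galois descent of the $(1,1)$-eigenspaces in $H^2(X,K)=\wedge^2 H^1(X,K)$. Two remarks, one minor and one more serious.

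First, your constraint (iii) (that $\alpha_1\overline{\alpha_1}$ is not a Salem number) is a consequence of irreducibility together with the moduli pattern (this is exactly Lemma \ref{galois}), so it does not prune the enumeration. What does prune it drastically, and what your sketch omits, is the observation that $Q$ is automatically \emph{reciprocal} after relabeling the roots: transitivity of $G$ plus $\beta\overline{\beta}=1$ produces a $\sigma$ with $\sigma(\beta)=\alpha$ and hence $\sigma(\overline{\beta})=1/\alpha$, so $1/\alpha$ is a root; this forces the paper's condition (AR) and places $G$ inside the wreath product $S_2\wr S_3\cong G_{48}\subset S_6$. Combined with the sign of the discriminant and the surjection of $G$ onto the $S_3$ of the trace polynomial, this is what reduces the list to the five groups $G_{48},G_{24},H_{24},G_{12},H_6$ of Theorem \ref{TheoremGaloisGroupNonProjectiveTori}. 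Without this reduction your ``short finite list'' is not short, and the orbit computations you propose are not tractable by hand.

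Second, your plan to decide projectivity by computing ``the Hodge-theoretic signature'' of $NS(X)\otimes\mathbb{R}$ is the genuine departure from the paper, and it is left unspecified at the exact place the statement needs it. For a complex torus $X=V/\Lambda$, projectivity is equivalent to $NS(X)\otimes\mathbb{R}\subset\mathrm{Herm}(V)$ meeting the positive-definite cone; so in each Galois case with $\rho=3$ you would have to exhibit $NS\otimes\mathbb{R}$ as an explicit $3$-dimensional space of Hermitian matrices (via $H^{1,1}=\bigoplus\mathbb{C}\,dz_i\wedge d\overline{z}_j$) and show it misses that cone. The paper sidesteps this by proving the projective direction first with Ueno's Proposition 14.5: projectivity makes $\alpha\beta\gamma$ a root of unity, so after an iterate $\alpha\beta\gamma=1$ (condition (AP)), which pins $G$ down to $H_6\cong S_3$ or $G_{12}$, and the orbit computation then gives $\rho=9$. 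The converse direction $\rho=9\Rightarrow$ projective is trivial ($NS\otimes\mathbb{R}=H^{1,1}(X,\mathbb{R})$ meets the open K\"ahler cone), and non-projective with $\rho\in\{0,3\}$ then follows by elimination. Your signature route could in principle replace Ueno, but as written the implication ``$\rho=3\Rightarrow X$ non-projective'' is asserted, not proved, and that is the step you must actually carry out.
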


Theorems \ref{TheoremExamples} and \ref{TheoremPicardNumber} are proved by analysing the Galois groups of the characteristic polynomials of such maps $f$, similar to the argument used by Voisin \cite{voisin}. We can construct all examples in Theorem \ref{TheoremExamples}.   

{\bf Acknowledgements.} The authors would like to thank Eric Bedford for his kindly sending us the preprint \cite{bedford-kim4}, and for his interest in the topic of the paper. Theorems \ref{TheoremExamples} and \ref{TheoremPicardNumber} are in response to some of his questions. We also would like to thank Paul Reschke for his interest in the paper, and to him and the referee for helpful comments.  

\section{Proof of Theorem \ref{TheoremSalemNumberAgain}}
In the first subsection we recall the definition and some properties of dynamical degrees of meromorphic maps. In the second subsection we prove Theorem \ref{TheoremSalemNumberAgain} together with some other properties of pseudo-automorphisms.

\subsection{Dynamical degrees}
\label{SubsectionDynamicalDegrees}

Let $f:X\rightarrow X$ be a dominant meromorphic map, here $X$ is a compact K\"ahler manifold. There is a well-defined pullback map $f^*:H^{p,p}(X)\rightarrow H^{p,p}(X)$. One way to define this pullback map is as follows. We choose $\Gamma$ to be a resolution of singularities of the graph of $f$, and let $\pi, g:\Gamma\rightarrow X$ be the induced holomorphic maps to the source and the target. If $\theta$ be a smooth closed $(p,p)$ form, then $g^*(\theta )$ is also a smooth closed $(p,p)$ form. Then the pushforward $\pi _*g^*(\theta )$ is well-defined as a closed $(p,p)$ current. The cohomology class of $\pi _*g^*(\theta )$ depends only on the cohomology class of $\theta$. Thus we define $f^*\{\theta\}:=\{\pi _*g^*(\theta)\}$. The pullback $f^*:H^{p,p}(X)\rightarrow H^{p,p}(X)$ is linear.  Let $\omega _X$ be a K\"ahler form on $X$. For $0\leq p\leq k= {\rm dim}, X$, the $p$-th dynamical degree of $f$ is defined by
\begin{eqnarray*}
\lambda _p(f)=\lim _{n\rightarrow\infty}||(f^n)^*(\omega _X^p)||^{1/n}.
\end{eqnarray*}
Dinh and Sibony (\cite{dinh-sibony10} and \cite{dinh-sibony1}), showed that the dynamical degrees are well-defined (i.e. the limits in the definition exist), and are bimeromorphic invariants.

There is one other method to define $\lambda _p(f)$. We let $r_p(f)$ $=$ $\max |\lambda |$, here $\lambda$ runs over all eigenvalues of $f^*:H^{p,p}\rightarrow H^{p,p}(X)$ be the spectral radius of this pullback. Then
\begin{eqnarray*}
\lambda _p(f)=\lim _{n\rightarrow\infty}r_p(f^n)^{1/n}.
\end{eqnarray*}
In particular, if $f$ is holomorphic then $\lambda _p(f)=r_p(f)$.

Some properties of dynamical degrees: $\lambda _p(f)\geq 1$ for all $p\geq 1$, and  $\lambda _p(f)^2\geq \lambda _{p-1}(f)\lambda _{p+1}(f)$ (log-concavity). 

When $f$ is holomorphic, the results by Gromov \cite{gromov} and Yomdin \cite{yomdin} prove that the topological entropy $h_{top}(f)$ of $f$ equals $\max _{0\leq p\leq k}\log \lambda _p(f)$. For a general meromorphic map, we can still define its topological entropy. Dinh and Sibony \cite{dinh-sibony1}, proved that $h_{top}(f)\leq \max _{0\leq p\leq k}\log \lambda _p(f)$.  

Given $0\leq p\leq k$. We say that $f^*$ is $p$-algebraic stable if for any $n\in \mathbb{N}$, $(f^n)^*=(f^*)^n$ as linear maps on $H^{p,p}(X)$ (see Fornaess-Sibony \cite{fornaess-sibony1}). We can define similar notion for the pushforward $f_*$. 

\subsection{Pseudo-automorphisms}

Let $X$ be a compact K\"ahler manifold of dimension $k$ and $f:X \rightarrow X$ be a pseudo-automorphism. 

The following result was given in Propositions 1.3 and 1.4 in \cite{bedford-kim} when $\mbox{dim}(X)=3$.  
\begin{lemma}
1) The maps $f_*$ and $f^*$ are all $1$- algebraic stable. 

2) $(f^{-1})^*=f_*:H^{1,1}(X)\rightarrow H^{1,1}(X)$ is the inverse of $f^*:H^{1,1}(X)\rightarrow H^{1,1}(X)$.

\label{LemmaPseudoAutomorphismCompatibility}\end{lemma}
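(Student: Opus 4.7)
The plan is to exploit the graph resolution of $f$. Fix a smooth resolution $(\tilde\Gamma,\pi,g)$ of the graph, where $\pi,g:\tilde\Gamma\to X$ are the holomorphic projections to source and target. By the very definition recalled in the previous subsection, $f^*\alpha=\pi_*g^*\alpha$ and symmetrically $f_*\alpha=g_*\pi^*\alpha$; interchanging the roles of $\pi$ and $g$ in the graph of $f^{-1}$ gives $(f^{-1})^*=f_*$ immediately.

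The first conceptual step is to translate the pseudo-automorphism hypothesis into a condition on $\tilde\Gamma$. Namely, $f$ has no exceptional divisor if and only if the strict transform under $\pi$ of every prime divisor of $X$ is not $g$-exceptional, which is equivalent to saying every $g$-exceptional divisor in $\tilde\Gamma$ is $\pi$-exceptional; the symmetric statement for $f^{-1}$ gives the reverse inclusion. Thus the crucial consequence of the pseudo-automorphism assumption is that the $\pi$-exceptional and the $g$-exceptional loci in $\tilde\Gamma$ agree as divisors.

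For part (2), I would compute
\begin{eqnarray*}
f_*f^*\alpha=g_*\pi^*\pi_*g^*\alpha.
\end{eqnarray*}
Since $\pi$ is a bimeromorphic holomorphic map between compact K\"ahler manifolds, projection formula gives $\pi_*\pi^*=\mathrm{id}$ on $H^{1,1}(X)$, and for any $\beta\in H^{1,1}(\tilde\Gamma)$ the class $\pi^*\pi_*\beta-\beta$ lies in $\ker(\pi_*)$, hence is supported on $\pi$-exceptional divisors. Applying this with $\beta=g^*\alpha$, write $\pi^*\pi_*g^*\alpha=g^*\alpha+\gamma$ with $\gamma$ a class supported on the $\pi$-exceptional locus. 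By the equality of exceptional loci above, $\gamma$ is also supported on the $g$-exceptional locus, so $g_*\gamma=0$. Hence $f_*f^*\alpha=g_*g^*\alpha=\alpha$, and the symmetric computation yields $f^*f_*\alpha=\alpha$. Combined with $(f^{-1})^*=f_*$, this proves (2).

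For part (1), note that the class of pseudo-automorphisms is closed under iteration: if $D\subset X$ is any prime divisor then $f(D)$ is again a prime divisor, so $f^n(D)$ is a divisor for every $n\in\ZZ$, showing that $f^n$ and $(f^n)^{-1}=f^{-n}$ have no exceptional divisors. Thus all iterates are pseudo-automorphisms. Now I would apply the Fornaess--Sibony criterion for $1$-algebraic stability: $f^*$ (respectively $f_*$) is $1$-stable on $H^{1,1}(X)$ if and only if no $f$-exceptional (respectively $f^{-1}$-exceptional) divisor has an iterate entering the indeterminacy locus of $f$ (respectively $f^{-1}$). Since there are no exceptional divisors at all, the criterion is satisfied vacuously. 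The main obstacle in writing this out cleanly is the bookkeeping of the first step—carefully justifying the equivalence between the pseudo-automorphism condition and the coincidence of $\pi$- and $g$-exceptional loci in $\tilde\Gamma$—after which both (1) and (2) fall out of standard projection-formula manipulations.
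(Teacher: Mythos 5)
Your proof is correct, but it follows a genuinely different path from the one in the paper. The paper's argument for both parts is current-theoretic and very short: one observes that, since $f$ is pseudo-automorphic, the currents $(f^n)^*(\theta)$ and $(f^*)^n(\theta)$ (resp.\ $f^*f_*\theta$ and $\theta$) coincide on the complement of an analytic set of codimension at least $2$, and then appeals to the extension property of closed $(1,1)$-currents with $L^1_{\rm loc}$ potentials to conclude that they coincide everywhere; passing to cohomology classes yields the lemma. You instead work on the resolved graph $\tilde\Gamma$ and isolate, as the key consequence of the pseudo-automorphism hypothesis, the coincidence of the $\pi$-exceptional and $g$-exceptional divisorial loci in $\tilde\Gamma$ --- a useful reformulation the paper does not spell out. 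For (2) you then run a projection-formula computation, reducing to two standard facts: that $\ker\bigl(\pi_*:H^{1,1}(\tilde\Gamma)\to H^{1,1}(X)\bigr)$ is spanned by classes of $\pi$-exceptional prime divisors, and that $g_*[E]=0$ whenever $E$ is $g$-exceptional (since $g(E)$ has codimension $\ge 2$). For (1) you invoke the Fornaess--Sibony stability criterion, which is satisfied vacuously because a pseudo-automorphism contracts no hypersurface. Both routes are sound. Your version is more explicit and avoids the analytic subtleties of pulling back currents, at the price of needing the cohomological structure theorem for $\ker(\pi_*)$ on a modification and the Fornaess--Sibony criterion as black boxes; the paper's version is shorter but leans on potential-theoretic extension and is terser about the codimension bookkeeping. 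One small stylistic point: where you write that a class in $\ker(\pi_*)$ is ``supported on $\pi$-exceptional divisors,'' it would be cleaner to say it is a linear combination of classes of $\pi$-exceptional prime divisors, and to cite the relevant structure result for the cohomology of a modification. Also, the closure of pseudo-automorphisms under iteration that you prove for part (1) is not actually needed once you state the stability criterion purely in terms of the exceptional divisors of $f$ and $f^{-1}$.
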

\begin{proof}
1) Let $\theta$ be a closed smooth $(1,1)$ form. Then since $f$ is pseudo-automorphic, the two currents $(f^n)^*(\theta )$ and $(f^*)^n(\theta )$ differ only on an analytic set of codimension at least $2$. Therefore they are the same current. Taking cohomology classes, we obtain 1). 

2) Proving similarly as in 1), we have that $f^*f_*$ is the identity map on $H^{1,1}(X)$. Because $f$ is bimeromorphic, $f_*=(f^{-1})^*$. From this we obtain 2).  
\end{proof}

The following lemma is essentially known, but we present a proof for the convenience of the readers.  
\begin{lemma}
Let $X$ be a compact K\"ahler manifold of dimension $3$ and $f:X\rightarrow X$ a pseudo-automorphism. Assume that there is a compact K\"ahler manifold $Y$ with $0 < {\rm dim}\, Y <{\rm dim}\, X =3$, and dominant meromorphic maps $\pi :X\rightarrow Y$ and $g:Y\rightarrow Y$ such that $\pi \circ f=g\circ \pi$.

i) Then $\lambda _1(f)=\lambda _2(f)$.

ii) If in addition $\pi$ is holomorphic and ${\rm dim}\, Y =2$,  then $\lambda _1(f)$ is either $1$ or a Salem number. 
\label{LemmaPseudoAutomorphisSalemNumber}\end{lemma}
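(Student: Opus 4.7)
The plan is to deduce both parts from the product formula for dynamical degrees of an equivariant meromorphic fibration: if $\pi\circ f=g\circ\pi$, then
\[
\lambda_p(f)=\max_{\max(0,\,p-k+l)\,\le\, j\,\le\,\min(p,\,l)}\lambda_j(g)\,d_{p-j}(f|\pi),
\]
where $k=\dim X=3$, $l=\dim Y\in\{1,2\}$, and $d_r(f|\pi)$ is the $r$-th relative dynamical degree of $f$ along the fibres of $\pi$. This is the formula of Dinh--Nguyen when $\pi$ is holomorphic, extended to the dominant meromorphic generality in \cite{dinh-nguyen-truong}. The only input about $f$ I shall need is $\lambda_3(f)=1$, which holds because a pseudo-automorphism is bimeromorphic and hence has topological degree $1$.

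For part (i) I would split on $l\in\{1,2\}$. In both cases the index range at $p=3$ degenerates to a single $j$, so $\lambda_3(f)$ is expressed as one product of two factors each $\ge 1$, and the equation $\lambda_3(f)=1$ pins both factors to $1$: this yields $\lambda_1(g)=d_2(f|\pi)=1$ when $l=1$, and $\lambda_2(g)=d_1(f|\pi)=1$ when $l=2$. Substituting these values into the formulae for $\lambda_1(f)$ and $\lambda_2(f)$ collapses both quantities to a common value ($d_1(f|\pi)$ when $l=1$, $\lambda_1(g)$ when $l=2$), proving $\lambda_1(f)=\lambda_2(f)$.

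For part (ii) the hypotheses place us in the $l=2$ holomorphic sub-case just treated, so $\lambda_1(f)=\lambda_1(g)$ and $\lambda_2(g)=1$. The second equality says the topological degree of the meromorphic self-map $g$ of the compact K\"ahler surface $Y$ is $1$, so $g$ is bimeromorphic. By Diller--Favre \cite{diller-favre}, $g$ may be birationally conjugated to an algebraically stable map $\widetilde g$ on a K\"ahler surface $\widetilde Y$; there $\widetilde g^{\ast}$ is an invertible lattice endomorphism of $H^{1,1}(\widetilde Y,\mathbb{Z})$ with inverse $\widetilde g_{\ast}$, and the projection formula $\widetilde g^{\ast}(\alpha)\cdot\beta=\alpha\cdot\widetilde g_{\ast}(\beta)$ upgrades $\widetilde g^{\ast}$ to an isometry of the intersection pairing, which on a compact K\"ahler surface has Lorentzian signature $(1,h^{1,1}(\widetilde Y)-1)$ by the Hodge index theorem. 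The spectral radius of an integral isometry of such a form is classically either $1$ or a Salem number, yielding $\lambda_1(f)=\lambda_1(g)\in\{1\}\cup\{\text{Salem numbers}\}$. The main obstacle I anticipate is this last surface-theoretic step: verifying carefully on the algebraically stable model that $\widetilde g^{\ast}$ is simultaneously invertible with inverse $\widetilde g_{\ast}$ and adjoint to $\widetilde g_{\ast}$ under the intersection pairing --- once both ingredients are in hand, the Salem alternative follows from the standard spectral theory of $O(1,n)(\mathbb{Z})$, while part (i) is a clean bookkeeping exercise with the product formula.
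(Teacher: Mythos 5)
Your part (i) is correct and follows the paper's argument: the product formula for relative dynamical degrees, the equation $\lambda_3(f)=1$, and the observation that all factors are $\ge 1$ force the relevant relative degree and the top degree of $g$ to equal $1$, after which the expressions for $\lambda_1(f)$ and $\lambda_2(f)$ collapse to a common value. The bookkeeping you describe matches the paper's computation exactly.

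Part (ii) has a genuine gap, precisely at the step you flag as the anticipated obstacle. From $\lambda_2(g)=1$ you only get that $g$ is bimeromorphic; that is \emph{not} sufficient to conclude that on an algebraically stable model $\widetilde g$ the pullback $\widetilde g^*$ is invertible on $H^{1,1}(\widetilde Y,\mathbf{Z})$ with inverse $\widetilde g_*$. For a bimeromorphic surface map that contracts curves, $\widetilde g^*\widetilde g_*\neq \mathrm{id}$ (the discrepancy is supported on the exceptional classes), and $\widetilde g^*$ is consequently not an isometry of the intersection form; the standard Cremona involution on $\mathbf{P}^2$ with $\sigma^*H=2H$ already illustrates that the projection formula alone does not give an isometry. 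Indeed, for bimeromorphic surface maps with $\lambda_1>1$ the dynamical degree can fail to be a Salem number (Pisot dynamical degrees occur), so the Lorentzian-lattice argument cannot work with only bimeromorphicity as input. What is missing is the use of the hypothesis that $\pi$ is \emph{holomorphic}: because $f$ is a pseudo-automorphism and $\pi$ is a holomorphic equivariant fibration, the induced map $g$ has no exceptional divisors, i.e.\ it is a pseudo-automorphism of the surface $Y$, hence a genuine automorphism (a bimeromorphic self-map of a surface contracting no curves, with inverse contracting none either, is biregular). This is the paper's route, and only then does the Salem alternative for automorphisms of compact K\"ahler surfaces (Diller--Favre, McMullen) apply to give $\lambda_1(f)=\lambda_1(g)\in\{1\}\cup\{\text{Salem numbers}\}$. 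You should insert the argument that $g$ is a pseudo-automorphism — this is where the holomorphicity of $\pi$ enters, and you currently do not use that hypothesis at all.
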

\begin{proof}

i) There are two cases to consider: 

Case 1: ${\rm dim}\, Y =2$. By results in Dinh-Nguyen \cite{dinh-nguyen} (for projective manifolds) and in \cite{dinh-nguyen-truong} (for general compact K\"ahler manifolds), in the case where the maps $\pi$, $f$ and $g$ are holomorphic see also  Nakayama-Zhang \cite{nakayama-zhang}, there are relative  dynamical degrees $\lambda _0(f|\pi )$ and $\lambda _1(f|\pi )$ with the following properties: $\lambda _0(f|\pi )=1$, $\lambda _1(f|\pi )\geq 1$, and
\begin{eqnarray*}
\lambda _3(f)&=&\lambda _2(g)\lambda _1(f|\pi ),\\
\lambda _2(f)&=&\max \{\lambda _1(g)\lambda _1(f|\pi ),\lambda _2(g)\lambda _0(f|\pi )\},\\
\lambda _1(f)&=&\max \{\lambda _1(g)\lambda _0(f|\pi ), \lambda _0(g)\lambda _1(f|\pi )\}.
\end{eqnarray*}
  
Because $f$ is a pseudo-automorphism, we have $\lambda _3(f)=1$. Since both $\lambda _2(g),\lambda _1(f|\pi )\geq 1$, it follows that $\lambda _2(f)=\lambda _1(f|\pi )=1$. Then it follows that $\lambda _2(f)=\lambda _1(f)=\lambda _1(g)$.

Case 2: ${\rm dim}\, Y =1$. In this case we proceed as in Case 1. 

ii) If $\pi$ is holomorphic and ${\rm dim}\, Y =2$ then $g$ is a pseudo-automorphism of a surface, hence must be an automorphism. Therefore $\lambda _1(g)$ is a Salem number (see e.g. Diller-Favre \cite{diller-favre}). From Case 1 of i) we find that $\lambda _1(f)=\lambda _1(g)$ is a Salem number.
\end{proof}

\subsection{Proof of Theorem \ref{TheoremSalemNumberAgain}} 

We now give the proof of Theorem \ref{TheoremSalemNumberAgain}. By part 2) of Lemma \ref{LemmaPseudoAutomorphismCompatibility}, if $\lambda$ is an eigenvalue of $f^*:H^{1,1}(X)\rightarrow H^{1,1}(X)$ then $1/\lambda$ is an eigenvalue of $(f^{-1})^*:H^{1,1}(X)\rightarrow H^{1,1}(X)$. Hence Theorem \ref{TheoremSalemNumberAgain} follows from the following stronger result.    

\begin{proposition} Assume that  $f:X\rightarrow X$ is bimeromorphic, $\lambda _1(f)>1$, and $1/\lambda _1(f)$ is also an eigenvalue of $f^*:H^{1,1}(X)\rightarrow H^{1,1}(X)$. 

1) If $\mbox{dim}(X)=4$ then either $\lambda _1(f)=\lambda _{3}(f)$ or $\lambda _1(f)^2=\lambda _{2}(f)$. 

2) If $\mbox{dim}(X)=3$ then $\lambda _1(f)=\lambda _2(f)$.
\label{stronger}\end{proposition}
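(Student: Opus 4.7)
The plan is to combine Perron--Frobenius for $f^*$ acting on the pseudo-effective cone of $H^{1,1}(X,\RR)$, the adjointness identity $\int_X f^*\gamma\smile\eta=\int_X\gamma\smile f_*\eta$, and the log-concavity $\lambda_p(f)^2\ge\lambda_{p-1}(f)\lambda_{p+1}(f)$ of dynamical degrees with boundary values $\lambda_0(f)=\lambda_k(f)=1$. Since $f^*$ preserves the psef cone of $H^{1,1}(X,\RR)$, Perron--Frobenius provides a psef eigenvector $\alpha$ with $f^*\alpha=\lambda\alpha$, where $\lambda=\lambda_1(f)$. The hypothesis that $1/\lambda$ is also an eigenvalue of $f^*$, combined with Lemma~\ref{LemmaPseudoAutomorphismCompatibility}(2) in the pseudo-automorphism setting, implies that $\lambda$ is an eigenvalue of $f_*=(f^{-1})^*$; applying Perron--Frobenius to $f_*$ then yields a psef eigenvector $\beta$ with $f_*\beta=\lambda\beta$, equivalently $f^*\beta=\lambda^{-1}\beta$.

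The lower bound follows by iterated adjointness. For a K\"ahler form $\omega$,
\[
\int_X\beta\smile(f^n)^*(\omega^{k-1})=\int_X(f^n)_*\beta\smile\omega^{k-1}=\lambda^n\int_X\beta\smile\omega^{k-1}.
\]
The last integral is strictly positive since $\beta$ is psef nonzero and $\omega^{k-1}$ is strictly positive. The left side is at most a constant times $\|(f^n)^*(\omega^{k-1})\|\sim\lambda_{k-1}(f)^n$, so $\lambda_{k-1}(f)\ge\lambda_1(f)$. For $\dim X=3$ this gives $\lambda_2\ge\lambda_1$, and for $\dim X=4$ it gives $\lambda_3\ge\lambda_1$---already the first alternative of the dichotomy in part~1.

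To finish dim~$3$ I would establish $\lambda_2\le\lambda_1$ by contradiction. If $\lambda_2>\lambda_1$, Perron--Frobenius for $f_*$ produces a psef eigenvector $\delta$ with $f_*\delta=\lambda_2\delta$, equivalently $f^*\delta=\lambda_2^{-1}\delta$ with $\lambda_2^{-1}<\lambda^{-1}$. Comparing
\[
\lambda^n\int_X\alpha\smile\delta\smile\omega=\int_X(f^n)^*\alpha\smile\delta\smile\omega=\int_X\alpha\smile(f^n)_*(\delta\smile\omega)
\]
with the dominant growth of $(f^n)_*(\delta\smile\omega)$, which should asymptotically equal $\lambda_2^{2n}\delta^2$ up to exceptional corrections, yields $\lambda^n\sim\lambda_2^{2n}$, i.e.\ $\lambda_2=\sqrt\lambda<\lambda$, contradicting $\lambda_2\ge\lambda$. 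For $\dim X=4$, a parallel analysis produces the stated dichotomy: either $\lambda_1=\lambda_3$, or degeneracy of the intersection $\int_X\alpha\smile\delta\smile\omega^{k-2}$ forces log-concavity to saturate, giving $\lambda_2=\lambda_1^2$.

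The main obstacle is that $f^*$ fails to be a ring homomorphism on $H^{\ge 2,\ge 2}$, so the asymptotic $(f^n)_*(\delta\smile\omega)\sim\lambda_2^{2n}\delta^2$ needs careful justification---the correction terms are supported on loci of indeterminacy of codimension $\ge 2$ and must be absorbed into the asymptotic estimates. Systematically tracking when these corrections vanish versus when they dominate is what distinguishes the clean equality in dim~$3$ from the dichotomy in dim~$4$.
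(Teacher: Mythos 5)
Your first step --- producing a psef eigenvector $\beta$ for $f_*$ with eigenvalue $\lambda_1(f)$ and pairing against $(f^n)^*(\omega^{k-1})$ to get $\lambda_{k-1}(f)\ge\lambda_1(f)$ --- is sound and corresponds to the paper's opening observation (the paper gets the same inequality more cheaply: $\lambda_1(f)$ is an eigenvalue of $(f^{-1})^*$ on $H^{1,1}$, hence $\le$ the spectral radius $\lambda_1(f^{-1})=\lambda_{k-1}(f)$). However, note that for $\dim X=4$ this gives only $\lambda_3(f)\ge\lambda_1(f)$, which is \emph{not} the alternative $\lambda_1(f)=\lambda_3(f)$ in the dichotomy; you still owe an equality.

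The second step is where there is a genuine gap, and it is not merely a matter of ``careful justification.'' The asymptotic $(f^n)_*(\delta\smile\omega)\sim\lambda_2^{2n}\delta^2$ you need is false. For a bimeromorphic $f$ on a threefold, $(f^n)_*=((f^{-1})^n)^*$ on $H^{2,2}$, and its norm grows like $\lambda_2(f^{-1})^n=\lambda_{1}(f)^n$, not like $\lambda_2(f)^{2n}$: the exponent $\lambda_2^{2n}$ could only arise if $(f^n)_*$ were multiplicative on cohomology, which it is not. Plugging the correct rate back into your comparison gives $\lambda_1(f)^n\int\alpha\smile\delta\smile\omega\lesssim\lambda_1(f)^n$, which is vacuous --- no contradiction arises, so the desired upper bound $\lambda_2(f)\le\lambda_1(f)$ is not established, and the $\dim=4$ dichotomy is even further out of reach. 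The missing ingredient is a spectral simplicity theorem: the paper quotes Theorem~1 of~\cite{truong3}, which asserts that $(f^{-1})^*$ on $H^{1,1}$ has exactly one eigenvalue of modulus greater than $\sqrt{\lambda_2(f^{-1})}$. Combined with log-concavity ($\lambda_1(f)^2\ge\lambda_2(f)=\lambda_2(f^{-1})$ in dimension~$4$, and $\lambda_2(f^{-1})=\lambda_1(f)$ in dimension~$3$), either $\lambda_1(f)^2=\lambda_2(f)$ or both $\lambda_1(f)$ and $\lambda_1(f^{-1})$ lie above the threshold and hence coincide. This spectral input is precisely what substitutes for the asymptotic control you were hoping to extract from intersection numbers, and without it your plan does not close.
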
 

\begin{proof}

1) First, we show the inequality $\lambda _1(f)\leq \lambda _1(f^{-1})=\lambda _{3}(f)$. In fact, since by assumption $1/\lambda _1(f)$ is an eigenvalue of $f^*:H^{1,1}(X)\rightarrow H^{1,1}(X)$, it follows from the first paragraph of the proof that $\lambda _1(f)$ is an eigenvalue of $(f^{-1})^*:H^{1,1}(X)\rightarrow H^{1,1}(X)$. Hence $\lambda _1(f^{-1})\geq \lambda _1(f)$. 

Next we show that either  $\lambda _1(f^{-1})\leq \lambda _1(f)$ or $\lambda _1(f)^2=\lambda _{2}(f)$. {\it Here we can not argue as above since a priori we do not know that $1/\lambda _1(f^{-1})$ is also an eigenvalue of $(f^{-1})^*:H^{1,1}(X)\rightarrow H^{1,1}(X)$.}

We instead proceed as follows. Again $\lambda _1(f)$ is an eigenvalue of $(f^{-1})^*:H^{1,1}(X)\rightarrow H^{1,1}(X)$; in particular 
$$\lambda _1(f^{-1})\geq \lambda _1(f)\,\, .$$ 
Since $f$ is bimeromorphic and $X$ has dimension $4$, we have $\lambda _2(f)=\lambda _2(f^{-1})$. From the log-concavity of dynamical degrees, we obtain
\begin{eqnarray*}  
\lambda _1(f^{-1})^2\geq \lambda _1(f)^2\geq \lambda _{2}(f)=\lambda _2(f^{-1})\,\, .
\end{eqnarray*}
If $\lambda _1(f)^2=\lambda _2(f)$, the proof is complete. Otherwise, then $\lambda _1(f)^2>\lambda _2(f^{-1})$. Theorem 1 in \cite{truong3} applied to $f^{-1}$ shows that $(f^{-1})^*:H^{1,1}(X)\rightarrow H^{1,1}(X)$ has only one eigenvalue larger than $\sqrt{\lambda _{2}(f^{-1})}$.  From the above, both $\lambda _1(f^{-1})=\lambda _3(f)$ and $\lambda _1(f)$ are eigenvalues of $(f^{-1})^*:H^{1,1}(X)\rightarrow H^{1,1}(X)$ larger than $\sqrt{\lambda _{2}(f^{-1})}$. Therefore, we must have $\lambda _1(f)=\lambda _{3}(f)$, as claimed. 

2) Proceed as in 1), we deduce that either $\lambda _1(f)^2=\lambda _2(f^{-1})$ or $\lambda _1(f)=\lambda _1(f^{-1})$. Because $\mbox{dim}(X)=3$, we have $\lambda _2(f^{-1})=\lambda _1(f)>1$. Therefore the first case can not happen, and we conclude that $\lambda _1(f)=\lambda _2(f)$.   
\end{proof}

\section{Equivariant holomorphic fibrations on complex tori}

Throughout this section, we consider a complex torus $X$ of ${\rm dim}\, X = n$, a {\it complex $n$-torus} for short. We write $X$ 
as the quotient of 
the universal covering space ${\mathbf C}^n$ by a discrete ${\mathbf Z}$-submodule $L \subset {\mathbf C}^n$,
 of rank $2n$: 
$$X = {\mathbf C}^n/L\, .$$ 
In this description, $L \simeq {\mathbf Z}^{2n}$ and we have a natural identification 
$${\mathbf C}^n = T_{X, 0} = H^0(X, \Omega_X^1)^*\, , \, 
L = \pi_1(X) = H_1(X, {\mathbf Z}) = H^1(X, {\mathbf Z})^*\,\, ,$$ 
and the weight one Hodge structure
$$H_1(X, {\mathbf Z}) \otimes {\mathbf C} = H^0(X, \Omega_X^1)^* \oplus 
\overline{H^0(X, \Omega_X^1)^*}\, .$$ 
Here $T_{X, 0}$ is the tangent space of $X$ at the origin $0$, $H^0(X, \Omega_X^1)$ is the space of global holomorphic $1$-forms on $X$, $H^0(X, \Omega_X^1)^*$ is the dual vector space of $H^0(X, \Omega_X^1)$ and $\overline{H^0(X, \Omega_X^1)^*}$ is the complex conjugate of $H^0(X, \Omega_X^1)$ with respect to the real structure $H^1(X, {\mathbf Z}) \otimes {\mathbf R}$ of $H^1(X, {\mathbf Z}) \otimes {\mathbf C}$ 
(See for instance \cite[Chapter 2, Section 6]{griffiths-harris}).

Note that the bimeromorphic selfmap of $X$ 
is necessarily biholomorphic, i.e., the group of bimeromorphic selfmaps ${\rm Bir}\, (X)$ coincides with the group of biholomorphic selfmaps ${\rm Aut}\, (X)$. This is because $X$ has no rational curve as complex subvarieties. We denote by ${\rm Aut}_{\rm group}\, (X)$ the group of automorphisms of the complex Lie group $X$. 

Let $f \in {\rm Aut}\, (X)$. We are interested in the following:
\begin{question}
When does $X$ 
admit a non-trivial $f$-equivariant holomorphic fibration 
$\pi :X\rightarrow Y$, 
that is, a surjective holomorphic map onto a complex analytic {\it K\"ahler} space $Y$ with $0 < {\rm dim}\, Y < {\rm dim}\, X$ and with $g \in {\rm Aut}\, (Y)$ such that $\pi \circ f=g\circ \pi$?
\label{QuestionTori}\end{question} 
Recall that the Stein factorization 
map is a unique finite morphism. So, by taking the Stein factorization of $\pi$, we may further assume without loss of generality that $Y$ is normal and $\pi$ has connected fibers. 

{\it In this section, we will use the notations introduced here and always assume that a holomorphic fibration is surjective onto a normal base space and has connected fibers.}

\subsection{$f$-admissible submodules and $f$-equivariant fibrations.}

We call a $\mathbf Z$-submodule $M$ of $L$ {\it admissible} 
if $M$ is primitive, i.e., $L/M$ is torsion free, and $M$ carries the weight one sub-Hodge structure of $L$ in the sense that 
$$M \otimes {\mathbf C} = (M \otimes {\mathbf C} \cap H^0(X, \Omega_X^1)^*) \oplus \overline{(M \otimes {\mathbf C} \cap H^0(X, \Omega_X^1)^*)}\,\, .$$
Each admissible submodule $M$ defines a complex subtorus 
$$Z_M := (M \otimes {\mathbf C} \cap H^1(X, \Omega_X^1)^*)/M$$ 
of $X$ and vice versa. Here by a complex {\it subtorus}, we mean a compact connected complex {\it Lie-subgroup} of the complex Lie group $X$. The primitivity in the converse is as follows. The quotient $X/Z$ by a complex subtorus $Z$ is a compact connected abelian holomorphic Lie group, i.e., a complex torus, and the quotient map $g : X \rightarrow X/Z$ is a locally trivial fibration. Therefore, we have a natural exact sequence
$$\pi_1(Z) \rightarrow \pi_1(X) \rightarrow \pi_1(X/Z) \rightarrow 0$$
in which $\pi_1(X/Z)$ is torsion free. 

The following important lemma is well-known. It is a direct consequence of Lemma 2.1 in \cite{demailly-hwang-peternell}.

\begin{lemma}
Any holomorphic fibration $\pi : X \rightarrow Y$ is the quotient map $X \rightarrow X/Z$ by a suitable subtorus $Z$ of $X$. 
\label{Ueno}\end{lemma}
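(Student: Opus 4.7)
My plan is to produce a subtorus $Z \subset X$ whose translates are exactly the fibers of $\pi$, and then to identify $Y$ with $X/Z$ compatibly with $\pi$. The central input is the cited Lemma~2.1 of \cite{demailly-hwang-peternell}; roughly speaking, this rigidity statement for analytic subvarieties of a complex torus guarantees that in a holomorphic family, the members share a common ``linear part'' (a fixed subtorus of $X$) and, over an open dense parameter set, are individually translates of this subtorus.

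First I would choose a general point $y_0 \in Y$ where $\pi$ is a submersion, set $F_0 := \pi^{-1}(y_0)$, and, after translating $X$, arrange that $0 \in F_0$. Applying the rigidity lemma to the family $\{\pi^{-1}(y)\}_{y \in Y}$ produces a subtorus $Z \subset X$ such that every fiber of $\pi$ over a dense open subset $U \subset Y$ is of the form $a_y + Z$ for some $a_y \in X$. In particular $F_0 = Z$ under our normalization, and the quotient homomorphism $q : X \to X/Z$ is constant on each fiber of $\pi$ over $U$.

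It remains to identify $Y$ with $X/Z$ via these maps. Over $U$ the fibers of $\pi$ and of $q$ coincide set-theoretically, so there is a holomorphic bijection $h : U \to q(\pi^{-1}(U))$ satisfying $q = h \circ \pi$ on $\pi^{-1}(U)$. Since $Y$ is normal, $X/Z$ is smooth, and both $\pi$ and $q$ are proper with connected fibers, $h$ extends to a biholomorphism $Y \cong X/Z$ by the analytic Zariski main theorem, yielding the desired factorization. The main obstacle is the rigidity step, which is where Lemma~2.1 of \cite{demailly-hwang-peternell} does the heavy lifting; without it one would have to rule out the possibility that the stabilizer subtorus of the generic fiber $\pi^{-1}(y)$ varies with $y$, which is not obvious a priori. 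Once that rigidity is granted, the remainder of the argument is essentially formal from the universal property of the quotient $X \to X/Z$.
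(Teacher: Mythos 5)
Your proposal follows essentially the same route as the paper: both use Lemma~2.1 of \cite{demailly-hwang-peternell} to produce the subtorus $Z$, and both use normality of $Y$ plus the Zariski main theorem to upgrade the induced finite bimeromorphic map between $Y$ and $X/Z$ to an isomorphism. The paper reads Lemma~2.1 a little more strongly (it already gives the global factorization $X \to X/Z \xrightarrow{g} Y$ with $g$ finite), so it avoids the step where you extend $h$ from the open set $U$; your extension step is plausible but compressed, whereas the paper gets it for free from the cited lemma.
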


\begin{proof} Let $Z$ be a general fiber of $\pi$. By Lemma 2.1 in \cite{demailly-hwang-peternell}, $Z$ is a subtorus of $X$, $\pi : X \rightarrow Y$ factors through $g : X/Z \rightarrow Y$ and $g$ is a finite map. Since $\pi$ has connected fibers, $g$ is then a bimeromorphic finite map. Since $Y$ is normal, this implies $g$ is an isomorphism by the Zariski main theorem.
\end{proof}

We call an admissible ${\mathbf Z}$-submodule $M$ of $L$ $f$-{\it admissible} 
if $f_*(M) = M$ 
and $0 < {\rm rank}\, M < 2n = {\rm rank}\, L$. 
Here $f_*$ is the automorphism of $L$ naturally induced by $f \in {\rm Aut}\, (X)$.

From this lemma, we obtain the following useful:

\begin{theorem}
The following conditions are equivalent: 

(i) $X$ admits an $f$-equivariant holomorphic fibration.

(ii) $L$ has an $f$-admissible ${\mathbf Z}$-submodule. 
\label{admissible}\end{theorem}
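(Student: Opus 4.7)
The plan is to establish the equivalence directly by exhibiting the natural bijection between holomorphic fibrations on $X$ and complex subtori of $X$ (provided by Lemma \ref{Ueno}), and translating $f$-equivariance on the fibration side into $f$-invariance on the submodule side.

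For the direction (ii) $\Rightarrow$ (i), given an $f$-admissible submodule $M$, I would form the subtorus $Z_M = (M \otimes {\mathbf C} \cap H^0(X, \Omega_X^1)^*)/M$ already described in the text. The rank condition $0 < {\rm rank}\, M < 2n$ translates into $0 < \dim Z_M < n$, so the quotient map $\pi : X \rightarrow X/Z_M$ is a non-trivial holomorphic fibration. To descend $f$, decompose $f = T_c \circ f_0$ into its translation and linear parts with $f_0 \in {\rm Aut}_{\rm group}\, (X)$. The invariance $f_*(M) = M$, combined with the fact that $f_0$ preserves the Hodge decomposition (being holomorphic), shows $f_0(Z_M) = Z_M$; since translations automatically permute cosets of any subtorus, the full $f$ descends to an automorphism $g$ of $X/Z_M$ satisfying $\pi \circ f = g \circ \pi$.

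For the converse (i) $\Rightarrow$ (ii), apply Lemma \ref{Ueno} to identify the given $f$-equivariant fibration $\pi : X \rightarrow Y$ with the quotient map $X \rightarrow X/Z$ by some complex subtorus $Z$, and set $M := \pi_1(Z) \subset \pi_1(X) = L$. Primitivity of $M$ follows from the torsion-freeness of $\pi_1(X/Z)$ (noted just before the lemma), the sub-Hodge structure on $M$ is inherited from the complex torus structure of $Z$, and the rank bounds are immediate from $0 < \dim Z < n$. The crucial point is to verify $f_*(M) = M$: writing $f = T_c \circ f_0$ again, the equivariance $\pi \circ f = g \circ \pi$ forces $f$ to send $Z$-cosets to $Z$-cosets, and for $z \in Z$ the identity $f(z) - f(0) = f_0(z)$ then shows $f_0(Z) \subseteq Z$, i.e., $f_*(M) \subseteq M$. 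Applying the same reasoning to $f^{-1}$ (which is $\pi$-equivariant via $g^{-1}$, since $g$ is automatically an automorphism of $Y$) gives $M \subseteq f_*(M)$, hence equality.

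The main bookkeeping issue to watch is keeping the translation and linear parts of $f$ separate: $f$-admissibility of $M$ is really a statement about $f_0$ alone, and one must confirm that $\pi$-equivariance of the full affine map $f$ forces invariance under $f_0$ --- which it does because subtracting two images of $Z$-cosets kills the translation part. Beyond Lemma \ref{Ueno} and the Hodge-theoretic dictionary between admissible submodules and complex subtori set up in the preceding paragraphs, no substantial new input is required.
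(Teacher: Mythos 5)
Your proof is correct and takes essentially the same approach as the paper: both directions rely on Lemma \ref{Ueno}, the decomposition $f = t_a \circ h$ into a translation and a group-automorphism part, and the dictionary between admissible submodules and complex subtori. The only cosmetic difference is that you take $M := \pi_1(Z)$ and verify $f$-invariance at the level of $Z$-cosets, whereas the paper sets $M := \ker\bigl(\pi_* : H_1(X,{\mathbf Z}) \to H_1(Y,{\mathbf Z})\bigr)$ and appeals directly to $\pi_* \circ f_* = g_* \circ \pi_*$; these two descriptions of $M$ coincide via the exact sequence for $\pi_1$ recalled just before Lemma \ref{Ueno}.
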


\begin{proof} Assume (i). Let $\pi : X \rightarrow Y$ be an $f$-equivariant holomorphic fibration. By Lemma \ref{Ueno}, $Y$ is a complex torus. Then 
$$M := {\rm Ker}\, 
(\pi_* : H_1(X, {\mathbf Z}) \rightarrow H_1(Y, {\mathbf Z}))$$ 
satisfies the requirement of (ii), by $f \circ \pi = \pi \circ g$. 
Note that $\pi_*$ is a homomorphism preserving weight one Hodge structure.

Assume (ii). Let $Z$ be a subtorus of $X$ corresponding to an $f$-admissible 
${\mathbf Z}$-submodule $M$ of $L$ and $q : X \rightarrow Y$ be the quotient map onto the quotient torus $Y = X/Z$. Decompose 
$f$ as $f = t_a \circ h$, where $h \in {\rm Aut}_{\rm group}\, (X)$ 
and $t_a : x \mapsto x+a$ is a translation by $a \in X$. Then $h(Z) =Z$ by the construction.
Thus 
$$f(Z +x) = h(Z +x) +a = h(Z) +h(x) +a = Z +(h(x)+a)$$
for all $x \in X$. Moreover, by $h(Z) = Z$, we have
$$h(x+z) - h(x) = h(z) \in Z$$
for all $z \in Z$ and $x \in X$. Hence the selfmap $f : x \mapsto h(x) +a$ of $X$ descends to the well-defined selfmap $g$ of $Y$ such that $\pi \circ f = g \circ \pi$. Since $\pi$ is a smooth surjective morphism and $Y$ is also smooth, it follows that $g \in {\rm Aut}\, (Y)$. Hence $g : X \rightarrow Y$ is an $f$-equivariant fibration. 
\end{proof}

\subsection{$f$-admissible submodules and the minimal polynomial of $f$.}

Consider the minimal polynomial $m(t)$ and the characteristic polynomial 
$\Phi(t)$ of the action $f_* \in {\rm Aut}\, (L)$ 
of $f \in {\rm Aut}\, (X)$. Then $m(t)$, $\Phi(t)$ are monic polynomials in ${\mathbf Z}[t]$. We have $f_* \otimes id_{\mathbf C} = F \oplus \overline{F}$, where 
$$F = f_* \otimes id_{\mathbf C} \vert H^0(X, \Omega_X^1)^*$$ and 
$\overline{F}$ is the complex conjugate map on 
$\overline{H^0(X, \Omega_X^1)^*}$ :
$$\overline{F} = f_* \otimes id_{\mathbf C} \vert \overline{H^0(X, \Omega_X^1)^*} = \overline{f_* \otimes id_{\mathbf C} \vert H^0(X, \Omega_X^1)^*}\,\, .$$ 
We have the following generalized eigenspace decomposition of the action of $f_*$ on $H^0(X, \Omega_X^1)^*$ and $\overline{H^0(X, \Omega_X^1)^*}$:
$$H^0(X, \Omega_X^1)^* = GV_F(\alpha_1) \oplus \cdots \oplus GV_F(\alpha_s)
\,\, ,$$
$$\overline{H^0(X, \Omega_X^1)^*} = 
\overline{GV_F(\alpha_1)} \oplus \cdots \oplus 
\overline{GV_F(\alpha_s)} = 
GV_{\overline{F}}(\overline{\alpha_1}) \oplus \cdots \oplus GV_{\overline{F}}(\overline{\alpha_s})\,\, ,$$
where $GV_F(\alpha)$ is the genaralized eigenspace of $F$ with eigenvalue $\alpha$ and similarly for $GV_{\overline{F}}(\alpha)$. 
We have then the $f$-stable decomposition
$$L \otimes {\mathbf C} = GV_F(\alpha_1) \oplus \cdots \oplus GV_F(\alpha_s) \oplus \overline{GV_F(\alpha_1)} \oplus \cdots \oplus 
\overline{GV_F(\alpha_s)}\,\, .$$
Note that $GV_F(\alpha_i) \oplus \overline{GV_F(\alpha_i)}$ is the generalized 
eigenspace of the action $f_*$ on $L \otimes {\mathbf C}$ if $\alpha_i$ 
is a real number. 

\begin{theorem}
If $m(t)$ is not irreducible in ${\mathbf Z}[t]$, that is, if there are $k(t), l(t) \in {\mathbf Z}[t] \setminus {\mathbf Z}$ such that $m(t) = k(t)l(t)$, then $X$ has an $f$-equivariant holomorphic fibration.  
\label{decomposition}\end{theorem}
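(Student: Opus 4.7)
My plan is to invoke Theorem \ref{admissible} by producing an $f$-admissible ${\mathbf Z}$-submodule $M$ of $L$ directly from the factorization $m(t) = k(t) l(t)$. The natural candidate is
$$M := L \cap \ker \bigl( k(f_* \otimes \mathrm{id}_{\mathbf Q}) : L \otimes {\mathbf Q} \to L \otimes {\mathbf Q} \bigr).$$
Since $f_*$ commutes with $k(f_*)$, the submodule $M$ is automatically $f_*$-invariant, and it is primitive in $L$ by construction: any element of $L$ some nonzero integer multiple of which lies in $\ker k(f_*)$ itself lies in $\ker k(f_*)$ after tensoring with ${\mathbf Q}$.

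Next I would check that $0 < \mathrm{rank}\, M < 2n$. By minimality of $m$ and the assumption $\deg k < \deg m$, we have $k(f_*) \neq 0$, so $M \otimes {\mathbf Q} = \ker k(f_*) \subsetneq L \otimes {\mathbf Q}$, which gives $\mathrm{rank}\, M < 2n$. On the other hand, $k(f_*) \circ l(f_*) = m(f_*) = 0$ with $l(f_*) \neq 0$ (again by minimality of $m$), so the image of $l(f_*)$ is a nonzero subspace contained in $\ker k(f_*)$, giving $\mathrm{rank}\, M > 0$.

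The point that requires real care is that $M$ carries a weight one sub-Hodge structure. Using the decomposition $f_* \otimes \mathrm{id}_{\mathbf C} = F \oplus \overline{F}$ on $L \otimes {\mathbf C} = H^0(X, \Omega_X^1)^* \oplus \overline{H^0(X, \Omega_X^1)^*}$, one has $k(f_* \otimes \mathrm{id}_{\mathbf C}) = k(F) \oplus k(\overline{F})$ and hence
$$M \otimes {\mathbf C} = \ker k(F) \oplus \ker k(\overline{F}).$$
Because $k(t) \in {\mathbf Z}[t]$ has real coefficients, one has $k(\overline{F}) = \overline{k(F)}$, so $\ker k(\overline{F}) = \overline{\ker k(F)}$. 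Since $\ker k(F) \subset H^0(X, \Omega_X^1)^*$ and the Hodge decomposition is a direct sum, intersecting with $H^0(X, \Omega_X^1)^*$ yields $M \otimes {\mathbf C} \cap H^0(X, \Omega_X^1)^* = \ker k(F)$, and the display above becomes exactly the required decomposition $M \otimes {\mathbf C} = (M \otimes {\mathbf C} \cap H^0(X, \Omega_X^1)^*) \oplus \overline{(M \otimes {\mathbf C} \cap H^0(X, \Omega_X^1)^*)}$. Thus $M$ is $f$-admissible and Theorem \ref{admissible} delivers an $f$-equivariant holomorphic fibration. The only genuine obstacle I anticipate is this Hodge compatibility step, and it is dissolved precisely by the fact that the factor $k$ has real coefficients, so that $k(f_*)$ commutes with complex conjugation on $L \otimes {\mathbf C}$.
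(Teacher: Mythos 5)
Your proof is correct, and it takes a genuinely more uniform route than the paper's. The paper splits into two cases according to whether $m(t)$ has two distinct irreducible factors (Case (a)) or is a power $q(t)^k$ of a single irreducible polynomial (Case (b)). In Case (a) the authors use a B\'ezout identity $F_1 h_1 + F_2 h_2 = N$ for coprime factors $F_i$ of the characteristic polynomial and take primitive closures of the images $F_{3-i}(f)h_{3-i}(f)(L)$; in Case (b) they take $M = \ker(q(f):L\to L)$ and argue via eigenspaces versus generalized eigenspaces. Your construction $M := L \cap \ker k(f_*)$ for an arbitrary nontrivial factor $k$ of $m$ subsumes both cases in one stroke: $f_*$-stability (in the strong sense $f_*(M)=M$, since both $f_*$ and $f_*^{-1}$ commute with $k(f_*)$), primitivity, the rank bounds via $k(f_*)\neq 0$ and $\mathrm{Im}\,l(f_*)\subset\ker k(f_*)$, and the Hodge compatibility via $k(\overline F)=\overline{k(F)}$ all go through without any case distinction. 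What the paper's longer argument in Case (a) buys — and your streamlined proof does not directly deliver — is Proposition \ref{twofibrations}: two $f$-equivariant fibrations $\phi_i:X\to Y_i$ with $\dim Y_1 + \dim Y_2 = \dim X$ and prescribed characteristic polynomials $F_i(t)$ on $H_1(Y_i,{\mathbf Z})$. That strengthening is used later (in the proofs of Theorem \ref{DoubleFibration}, Theorem \ref{TheoremToriAgain}, and Corollary \ref{CorollaryNonTrivialFibrationDimension3}), so the paper's case split is not redundant overhead. For the bare statement of Theorem \ref{decomposition}, your argument is complete, correct, and cleaner.
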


\begin{proof}
Since $m(t)$ is monic and not irreducible in ${\mathbf Z}[t]$, there are two cases:

{\it Case (a).} $m(t)$ has at least two distinct irreducible factors over ${\mathbf Z}[t]$, i.e., there is a decomposition $m(t) = m_1(t)m_2(t)$ such that $m_1(t), m_2(t) \in {\mathbf Z}[t] \setminus {\mathbf Z}$ and $m_1(t)$ and $m_2(t)$ has no common root in ${\mathbf C}$. Recall that the roots of $m(t)$ and $\Phi(t)$ are the same modulo multiplicities. Therefore, we have also a decomposition $\Phi(t) = F_1(t)F_2(t)$ such that $F_1(t), F_2(t) \in {\mathbf Z}[t] \setminus {\mathbf Z}$ and $F_1(t)$ and $F_2(t)$ has no common root in ${\mathbf C}$.

{\it Case (b).} $m(t) = q(t)^k$ where $k \ge 2$ and $q(t) \in {\mathbf Z}[t]$ is irreducible.

First, consider Case (a). In this case, we shall show that the following stronger result which we shall use later:
\begin{proposition}
In Case (a), $X$ has at least two $f$-equivariant holomorphic fibrations 
$\phi_i : X \rightarrow Y_i$ ($i=1$, $2$) such that ${\rm dim}\, Y_1 + {\rm dim}\, Y_2 = {\rm dim}\, X$ and the characteristic polynomial of $(g_i)_* \vert H_1(Y_i, {\mathbf Z})$ is exactly $F_i(t)$. 
Here $g_i$ is the induced automorphism of $Y_i$. In particular, both ${\rm deg}\, F_i(t)$ are even. 
\label{twofibrations}\end{proposition}

\begin{proof} By the assumption, there are $h_1(t), h_2(t) \in {\mathbf Z}[t]$ and an integer $N$ such that 
such that 
$$F_1(t)h_1(t) + F_2(t)h_2(t) = N\,\, .$$ 
Consider the following ${\mathbf Z}$-submodules of $L$ corresponding to the factors $F_i(t)$ ($i=1$, $2$):
$$M_i' := F_{3-i}(f)h_{3-i}(f)(L)\,\, ,\,\, M_i := (M_i')^c\, ,$$
where $(M_i')^c$ is the primitive closure of the ${\mathbf Z}$-submodule $M_i' \subset L$, 
i.e., the smallest ${\mathbf Z}$-submodule such that $M_i' \subset (M_i')^c \subset L$ 
and $L/(M_i')^c$ is torsion free. In other words, $(M_i')^c = M_i' \otimes {\mathbf Q} \cap L$ in $L \otimes {\mathbf Q}$. By definition, $M_i$ are $f$-stable. Thus, by the basic property of the generalized eigenspace decomposition, $M_i \otimes {\mathbf C}$ is the direct sum of 
the generalized eigenspaces of $f_*$ with eigenvalue $\alpha_j$ such that 
$F_{i}(\alpha_j) = 0$. Note that $F_{i}(\alpha_j) = 0$ if and only if $F_{i}(\overline{\alpha_j}) = 0$. This is because 
$F_{i}(t) \in {\mathbf Z}[t]$. Hence, $\{1, 2, \cdots, s\}$ is decomposed into the disjoint union of two proper subsets $J_i$ ($i=1$, $2$) such that
$$M_i \otimes {\mathbf C} = \oplus_{j \in J_i} (VG_F(\alpha_j) \oplus VG_{\overline{F}}(\overline{\alpha_j})) = \oplus_{j \in J_i} VG_F(\alpha_j) \oplus \overline{\oplus_{j \in J_i} VG_F(\alpha_j)}$$
regardless that $\alpha_j$ is real or not (see also the remark before Theorem \ref{decomposition}). Hence, $M_i$ are $f$-admissible and therefore $X$ has 
two $f$-equivariant holomorphic fibrations $\pi_i : X \rightarrow Y_i := X/Z_{M_i}$ by Theorem \ref{admissible}. By construction, 
$${\rm dim}\, Y_i = {\rm dim}\, X - \frac{1}{2}{\rm rank}\,M_i\,\, ,\,\, 
{\rm rank}\, M_1\, +\, {\rm rank}\, M_2\, =\, 2{\rm dim}\, X\,\, .$$
Hence ${\rm dim}\, Y_1 + {\rm dim}\, Y_2 = {\rm dim}\, X$. The last two assertions follow from the generalized eigenspace decomposition above.  
\end{proof}

So, we are done in Case (a). Next, consider Case (b). Let $V_F(\alpha_i)$ (resp. $V_{\overline{F}}(\overline{\alpha_i})$) be the eigenspace of $F$ (resp. $\overline{F}$) with eigenvalue $\alpha_i$ (resp. $\overline{\alpha_i}$). Consider
$$M := {\rm Ker}\, (q(f) : L \rightarrow L)$$
and 
$$V := \oplus_{i=1}^{s} V_{F}(\alpha_i) \subset H^0(X, \Omega_X^1)^*\,\, ,\,\, 
\overline{V} = \overline{\oplus_{i=1}^{s} V_{F}(\alpha_i)} = \oplus_{i=1}^{s} V_{\overline{F}}(\overline{\alpha_i}) \subset \overline{H^0(X, \Omega_X^1)^*}\,\, .$$ 
Then $M$ is primitive and $f$-stable. Since $q(t) \in {\mathbf Z}[t]$, it follows from Gauss's elimination theory or the freeness, hence flatness, of ${\mathbf C}$ as ${\mathbf Z}$-module, we have
$$M \otimes {\mathbf C} = {\rm Ker}\, (q(f) \otimes id_{\mathbf C} : L \otimes {\mathbf C} \rightarrow L \otimes {\mathbf C})\,\, .$$
Now, by considering the Jordan canonical form of $f$, we obtain that
$$M \otimes {\mathbf C} = V \oplus \overline{V}$$
and therefore $M$ is $f$-admissible if $0 < {\rm rank}\, M < {\rm rank}\, L$. Since $\alpha_1$ is of multiplicity greater than or equal to $2$, it follows that $V_F(\alpha_1) \not= VG_F(\alpha_1)$. 
Hence $0 < {\rm rank}\, M < {\rm rank}\, L$ as desired. Therefore $X$ has 
an $f$-equivariant holomorphic fibration by Theorem \ref{admissible}. 
\end{proof}

\begin{remark}
Obviously, the converse of Theorem \ref{decomposition} is not true. For instance, consider $f := -id_{X \times X} \in {\rm Aut}\,(X \times X)$. Then, $m(t) = t+1$ but the second projection $p_2 : X \times X \rightarrow X$ is an $f$-equivariant holomorphic fibration. Here we notice that $-1$ is the eigenvalue of $f_* \vert H^0(X \times X, \Omega_{X \times X}^1)^*$ of multiplicity $2\cdot{\rm dim}\, X 
\ge 2$. 
\label{irreducible}\end{remark}

The following is a partial converse of Theorem \ref{decomposition}. 
Recall that $m(t)$ is the minimal polynomial $f_* \vert L$ over ${\mathbf Z}$.

\begin{proposition}
Assume that

(i) $m(t)$ is irreducible over ${\mathbf Z}[t]$ and;

(ii) $F = f_* \vert H^0(X, \Omega_X^1)^*$ has $n = {\rm dim}\, X = {\rm dim}\, H^0(X, \Omega_X^1)^*$ mutually distinct eigenvalues. 

Then $X$ has no $f$-equivariant holomorphic fibration.
\label{twiceminimal}\end{proposition}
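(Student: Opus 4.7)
My plan is to argue by contradiction: suppose $X$ admits an $f$-equivariant holomorphic fibration. Then Theorem \ref{admissible} supplies a proper nontrivial $f$-admissible ${\mathbf Z}$-submodule $M \subset L$, and I will derive a contradiction from conditions (i) and (ii).

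The first step is to put a number-field structure on the picture. By (i) and Gauss's lemma, $m(t)$ is irreducible over ${\mathbf Q}$, so $K := {\mathbf Q}[t]/(m(t))$ is a number field of degree $d := \deg m(t)$. The relation $m(f_*) = 0$ turns $L \otimes {\mathbf Q}$ into a $K$-vector space of dimension $k := 2n/d$. Because $K$ coincides with the subalgebra ${\mathbf Q}[f_*] \subset \mathrm{End}_{\mathbf Q}(L \otimes {\mathbf Q})$, every $f_*$-stable ${\mathbf Q}$-subspace is automatically $K$-stable. In particular, $M \otimes {\mathbf Q}$ is a proper nontrivial $K$-subspace of $L \otimes {\mathbf Q}$, which forces $k \geq 2$ and hence $d \leq n$.

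Next, I use (ii) to bound $k$ from above. The characteristic polynomial $\Phi(t)$ of $f_*$ on $L$ equals $m(t)^k$ (by irreducibility of $m$), so each root of $m(t)$ is an eigenvalue of $f_* \otimes \mathrm{id}_{\mathbf C}$ on $L \otimes {\mathbf C}$ of multiplicity exactly $k$. Under the Hodge decomposition $L \otimes {\mathbf C} = H^0(X, \Omega_X^1)^* \oplus \overline{H^0(X, \Omega_X^1)^*}$, condition (ii) gives that $F$ has $n$ mutually distinct eigenvalues $\alpha_1, \ldots, \alpha_n$, and correspondingly $\overline{F}$ has the $n$ distinct eigenvalues $\overline{\alpha_1}, \ldots, \overline{\alpha_n}$. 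Hence each root of $m(t)$ contributes multiplicity at most one to $F$ and at most one to $\overline{F}$, so $k \leq 2$. Combining the two bounds yields $k = 2$ and $d = n$, and moreover each root of $m(t)$ must be an eigenvalue of both $F$ and $\overline{F}$.

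Finally, I extract the contradiction from the admissibility of $M$. Since $\dim_K (M \otimes {\mathbf Q}) = 1$, write $M \otimes {\mathbf Q} = K \cdot v$ for some nonzero $v$, and decompose $v = \sum_i v_i$ into its components in the $\alpha_i$-eigenspaces $E_i = V_F(\alpha_i) \oplus V_{\overline{F}}(\alpha_i)$ of $f_* \otimes \mathrm{id}_{\mathbf C}$. By Lagrange interpolation on the distinct $\alpha_i$, $M \otimes {\mathbf C}$ is the ${\mathbf C}$-span of the $v_i$, so it meets each $2$-dimensional $E_i$ in the line ${\mathbf C}\cdot v_i$. The admissibility requirement that $M \otimes {\mathbf C}$ be the direct sum of its holomorphic and antiholomorphic parts forces each $v_i$ to lie entirely in the holomorphic line $V_F(\alpha_i)$ or entirely in the antiholomorphic line $V_{\overline{F}}(\alpha_i)$. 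Combined with the reality condition $\bar v = v$ — interpreted via how complex conjugation acts on the $E_i$ (fixing $E_i$ when $\alpha_i \in {\mathbf R}$, and exchanging $E_i$ with $E_j$ where $\alpha_j = \overline{\alpha_i}$ otherwise) — one reaches the desired contradiction. The main obstacle is precisely this last step: producing the contradiction in the subcase where $\{\alpha_i\}$ coincides with $\{\overline{\alpha_i}\}$ as a set but the individual $\alpha_i$ are non-real will require a careful use of the ${\mathbf Q}$-rationality of $v$ together with the Galois action coming from the irreducibility of $m(t)$, beyond the purely formal algebra of the first two steps.
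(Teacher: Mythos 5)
Your first two steps are sound and take a genuinely different, more module-theoretic route than the paper: you make the algebra ${\mathbf Q}[f_*] \simeq K$ explicit, observe that an $f$-stable proper nontrivial ${\mathbf Q}$-subspace is a $K$-subspace (forcing $k = \dim_K(L\otimes{\mathbf Q}) \ge 2$), and then bound $k\le 2$ by counting each root's contribution to $F$ and to $\overline F$ via hypothesis (ii). This cleanly isolates the only dangerous case: $\deg m = n$, $k=2$, and every $\overline{\alpha_i}$ among the $\alpha_j$. The paper instead writes $M\otimes{\mathbf C}=\oplus_{i\in I}(V_F(\alpha_i)\oplus V_{\overline F}(\overline{\alpha_i}))$ directly and appeals to Galois transitivity on $\mathcal S$ to force $I=\{1,\dots,n\}$.

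The gap you flag in your Step 3 is real, not a missing technicality. Your argument does close when some $\alpha_i$ is real: then $v_i=\overline{v_i}$ while $v_i$ must lie purely in $V_F(\alpha_i)\subset H^0$ or purely in $V_{\overline F}(\alpha_i)\subset\overline{H^0}$, and a nonzero real vector cannot lie in $H^0$ or in $\overline{H^0}$, contradiction. But when $m(t)$ has no real root the reality constraint $\overline{v_i}=v_{\pi(i)}$ (with $\pi$ a fixed-point-free involution) is perfectly consistent with each $v_i$ being purely holomorphic or purely antiholomorphic, and the Galois action on $L\otimes K'$ does not see the transcendental Hodge splitting, so ${\mathbf Q}$-rationality adds nothing further. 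In fact the Proposition as stated fails in exactly this case: take $E={\mathbf C}/{\mathbf Z}[\omega]$ with $\omega=e^{2\pi i/3}$, $X=E\times E$, $f(z_1,z_2)=(\omega z_1,\omega^2 z_2)$. Then $m(t)=t^2+t+1$ is irreducible, $F=\mathrm{diag}(\omega,\omega^2)$ has distinct eigenvalues, yet the projection to the second factor is an $f$-equivariant fibration ($M=H_1(E,{\mathbf Z})\oplus 0$ is $f$-admissible). The paper's own Galois-transitivity step also does not rigorously exclude this (it controls only the \emph{set} of eigenvalues of $f_*$ on $M$, not their multiplicities); the Proposition is applied only where the problematic case cannot arise, namely when $m(t)=\Phi(t)$ has degree $2n$ (Theorem \ref{DoubleFibration}) or when $m(t)$ is a Salem polynomial with a real root $>1$ (Proposition \ref{EvenDimensionTori}), in which settings your Step 3 does go through. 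So: you correctly identified the exact place where the argument is incomplete, and that incompleteness is intrinsic to the statement, not to your approach.
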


\begin{proof} By (i), $m(t)$ has $n$ distinct simple roots. Denote them by $\alpha_i$ ($1 \le i \le n$).  
By (iii), the action $F = f_* \vert H^0(X, \Omega_X^1)$ has the following eigenspace decomposition:
$$H^0(X, \Omega_X^1)^* = \oplus_{i=1}^{n} V_{F}(\alpha_i)$$
and therefore $\overline{F}$-eigenspace decomposition
$$\overline{H^0(X, \Omega_X^1)^*} = \oplus_{i=1}^{n} V_{\overline{F}}(\overline{\alpha_i}) = \oplus_{i=1}^{n} \overline{V_{F}(\alpha_i)}\, .
$$
in which ${\rm dim}\, V(\alpha_i) = 1$ for each $i$ by ${\rm dim}\, H^0(X, \Omega_X^1) = n$. Note that the set 
$${\mathcal S} := \{\alpha_i, \overline{\alpha_i}\, \vert\, 1 \le i \le n\} \,\, ,$$
coincides with the set of roots of $m(t)$. This follows since $m(t)$ is the minimal polynomial of $f_* \otimes id_{\mathbf C} = F \oplus \overline{F}$ also over ${\mathbf C}$. 
Assume that there would be an $f$-admissible submodule $M$ of $L$. By ${\rm dim}\, V(\alpha_i) = 1$ and $f$-admissibility, there would be a non-empty subset $I$ of $\{1,2, \cdots , n\}$ such that
$$M \otimes {\mathbf C} = \oplus_{i \in I} (V_F(\alpha_i) \oplus {V_{\overline{F}}(\overline{\alpha_i}))}\,\, ,$$
regardless that $\alpha_i$ is real or not. 
Since $m(t)$ is irreducible over ${\mathbf Z}$, it follows that any two elements of ${\mathcal S}$ are Galois conjugate to each other over ${\mathbf Q}$. Thus, by $I \not= \emptyset$, we would have
$$M \otimes {\mathbf C} = \oplus_{i=1}^{n} (V_F(\alpha_i) \oplus V_{\overline{F}}(\overline{\alpha_i}))\,\, ,$$
whence $M = L$, by the primitivity, a contradiction. 
Hence there is no $f$-admissible ${\mathbf Z}$-submodule of $L$, and therefore by Theorem \ref{admissible}, $X$ admits no $f$-equivariant holomorphic fibration. 
\end{proof}

Proposition \ref{twiceminimal} can be used to construct a pair $(X, f \in {\rm Aut}\, (X))$ with no $f$-equivariant holomorphic fibration when ${\rm dim}\, X = 2k \ge 4$, even if $\lambda_1(f)$ is a Salem number. We need the following result on Salem numbers

\begin{lemma}
If $\alpha$ is a Salem number, then so is $\alpha^k$ for any positive 
integer $k$.
\label{power}\end{lemma}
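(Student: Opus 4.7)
The plan is to unpack the definition of a Salem number and then track what happens to the full set of Galois conjugates under the map $z \mapsto z^k$. Recall that a Salem number $\alpha$ is a real algebraic integer greater than $1$ whose other Galois conjugates all lie in the closed unit disk, with at least one lying on the unit circle. Equivalently, the minimal polynomial $P(t) \in {\mathbf Z}[t]$ is reciprocal of even degree $2d \ge 4$, with root set $\{\alpha, 1/\alpha, \beta_1, \overline{\beta_1}, \ldots, \beta_{d-1}, \overline{\beta_{d-1}}\}$, where each $\beta_j$ satisfies $|\beta_j| = 1$ and $\beta_j \notin {\mathbf R}$.

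First I would check the two easy conditions for $\alpha^k$: it is clearly a real algebraic integer, and since $\alpha > 1$ we have $\alpha^k > 1$. Next, because the Galois conjugates of $\alpha^k$ form a subset of $\{\alpha^k, \alpha^{-k}, \beta_j^k, \overline{\beta_j}^k : 1 \le j \le d-1\}$, and since $|\alpha^{-k}| < 1$ and $|\beta_j^{\pm k}| = 1$, every conjugate of $\alpha^k$ other than $\alpha^k$ itself has absolute value at most $1$. This already gives everything except the existence of a conjugate strictly on the unit circle and the requirement that $\alpha^k$ is not, say, a Pisot number or a quadratic unit in disguise.

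The crux of the argument, and the main obstacle, is ruling out that the conjugates $\beta_j^k$ collapse onto $\pm 1$, which would leave $\alpha^k$ with no conjugates properly on the unit circle and potentially drop the degree of its minimal polynomial to $2$. Suppose for contradiction that $\beta_j^k \in \{1, -1\}$ for some $j$; then $\beta_j$ would be a root of unity, so it would satisfy some cyclotomic polynomial $\Phi_m(t)$. But $\beta_j$ is a root of the irreducible polynomial $P(t)$, so $P(t) = \Phi_m(t)$ up to a scalar, forcing $\alpha$ itself to be a root of unity, which contradicts $\alpha > 1$. Hence $\beta_j^k$ lies on the unit circle and is not real, so $\beta_j^k$ and $\overline{\beta_j}^k = \overline{\beta_j^k}$ give a genuine non-real pair of conjugates of $\alpha^k$ on the unit circle.

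Putting these pieces together, $\alpha^k$ is a real algebraic integer exceeding $1$, all of whose other conjugates lie in the closed unit disk with at least one (in fact a complex-conjugate pair) on the unit circle, so $\alpha^k$ is a Salem number. I would also note that the reciprocity of the minimal polynomial of $\alpha^k$ follows automatically: the Galois conjugate $1/\alpha$ of $\alpha$ maps to $\alpha^{-k} = 1/\alpha^k$, and on the unit circle $\overline{\beta_j^k} = 1/\beta_j^k$, so the root set of the minimal polynomial of $\alpha^k$ is closed under $t \mapsto 1/t$. This completes the proof plan.
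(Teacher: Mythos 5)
Your proposal is correct and follows essentially the same approach as the paper, which observes in one line that the Galois conjugates of $\alpha^k$ are $k$-th powers of Galois conjugates of $\alpha$. You add the useful detail, omitted in the paper's terse proof, that no unit-circle conjugate $\beta_j$ can collapse to $\pm 1$ under $z\mapsto z^k$ (since $\beta_j$ is not a root of unity, its minimal polynomial being the non-cyclotomic $P(t)$), which guarantees that $\alpha^k$ retains a genuine conjugate on the unit circle and hence is a Salem number rather than a quadratic unit.
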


\begin{proof} This follows that any Galois conjugate of $\alpha^k >1$ 
is $\beta^k$ for some Galois conjugate $\beta$ of $\alpha$ and vice versa. 
\end{proof}

\begin{lemma}
For each even positive integer $2k$, there is a Salem number of degree $2k$.
\label{gm}\end{lemma}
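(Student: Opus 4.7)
The plan is to construct, for each integer $k \geq 2$, a monic reciprocal polynomial $R_k(t) \in \mathbb{Z}[t]$ of degree $2k$ whose largest real root is a Salem number. Under the standard convention no Salem number of degree $2$ exists, and the lemma will only be applied for $2k \geq 4$, so I focus on that range.

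I would use Salem's classical construction from Pisot numbers. Start with a Pisot polynomial $P(t) \in \mathbb{Z}[t]$ of degree $k$, for example the minimal polynomial of the largest root of $t^{k} - t^{k-1} - \cdots - t - 1$, which is well known to be Pisot for every $k \geq 1$. Let $P^{*}(t) := t^{k} P(1/t)$ be its reciprocal. For a suitable choice of sign $\epsilon \in \{\pm 1\}$ the polynomial
$$R(t) := t^{k} P(t) + \epsilon P^{*}(t)$$
is reciprocal monic in $\mathbb{Z}[t]$ of degree $2k$. Step 1 is to verify by a Rouch\'e-type argument on a narrow annular neighbourhood of $\{|t|=1\}$ that $R(t)$ has exactly one root $\alpha > 1$, its reciprocal $1/\alpha$, and $2k-2$ roots on the unit circle; the two key inputs are that $|P^{*}(t)| = |P(t)|$ on $|t|=1$ and that $P$ has exactly one root outside the closed unit disk. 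Step 2 is to show that $R$ is irreducible over $\mathbb{Z}$, so that $\alpha$ has degree exactly $2k$ over $\mathbb{Q}$.

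The main obstacle is Step 2. Since any factorisation of the reciprocal polynomial $R$ groups reciprocal factors together with cyclotomic factors, irreducibility reduces to ruling out cyclotomic divisors (which can be handled by evaluating $R$ at small roots of unity and by a mod-$p$ reduction) and to showing that the Salem factor containing $\alpha$ and $1/\alpha$ exhausts the remaining degree. A clean way to guarantee this is to replace the exponent $k$ in $t^{k} P(t)$ by a sufficiently large integer $N$ and start from a Pisot polynomial of correspondingly adjusted degree so that the total degree is $2k$; for $N$ large the Salem factor of $t^{N} P(t) + \epsilon P^{*}(t)$ is known to have full degree. Failing that, one can simply invoke Salem's original existence theorem, which asserts that Salem numbers occur in every even degree $\geq 4$.
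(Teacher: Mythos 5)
Your route is genuinely different from the paper's. You propose Salem's classical construction from a Pisot polynomial $P$ of degree $k$ via $R(t) = t^{k}P(t) + \epsilon\,P^{*}(t)$, while the paper follows Gross--McMullen: build an irreducible ``Salem trace polynomial'' $R_k(t) = C_{k-3}(t)(t^2-4)(t-a)-1$ (or its even-degree analogue) with $k-1$ roots in $(-2,2)$ and one root $>2$, then pass to $S_{2k}(t)=t^k R_k(t+1/t)$. The crucial advantage of the paper's route is that irreducibility of $R_k$ is established explicitly: as $a\to\infty$ the roots in $(-2,2)$ converge to those of $C_{k-3}(t)(t^2-4)$, so any putative low-degree factor would have to divide $C_{k-3}$, which is impossible since $R_k\equiv 1$ at the roots of $C_{k-3}$.

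Your Step 2 (irreducibility) is a genuine gap, and the two fixes you offer do not close it. The first — ``replace $k$ by a sufficiently large integer $N$ and adjust $\deg P$ so the total degree is $2k$'' — is self-contradictory: with $\deg(t^N P(t)+\epsilon P^*(t)) = N + \deg P$ pinned to $2k$, taking $N$ large forces $\deg P$ down to $0$ or below, so there is no room to ``take $N$ large'' for a fixed target degree. (And it is not a general theorem that $t^N P(t)\pm P^*(t)$ is irreducible for large $N$; cyclotomic factors can and do occur.) The second fix — ``invoke Salem's original existence theorem, which asserts that Salem numbers occur in every even degree $\geq 4$'' — is circular: that assertion \emph{is} the content of the lemma, it is not a theorem of Salem's, and the paper's proof exists precisely to supply the argument (due to Gross--McMullen) that establishes it. A minor secondary point: on $|t|=1$ one has $|t^k P(t)|=|P^*(t)|$ exactly, so Rouch\'e cannot be applied directly on the unit circle; the standard argument for the root distribution in Salem's construction is a winding-number/intermediate-value argument, not a straightforward Rouch\'e comparison.
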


\begin{proof} There are Salem numbers of small degrees, say, $2$, $4$, $6$, $8$. When $k \ge 4$ is odd, this lemma is proved by Gross-McMullen (see Theorem 7.3 in \cite{gross-mcmullen}). They show first that for each even degree $2m$, there is a monic polynomial $C_{2m}(t) \in {\mathbf Z}[t]$ such that zeros of $C_{2m}(t)$ are all distinct, real and in $(-2, 2)$.  Then they show next that for odd $k \ge 5$, the polynomial 
$$R_{k}(t) = C_{k-3}(t)(t^2-4)(t-a) -1$$ is a Salem trace polynomial of degree $k$ for all large integer $a$ and more. This means that the polynomial $R_k(t)$ has $k-1$ roots belong to $(-2,2)$ and the other root is $>2$, and moreover $R_k(t)$ is irreducible. Let us briefly recall their argument.

The former is easy to show by inspecting the graph of the functions $C_{k-3}(t^2-4)$ and $1/(t-a)$. For the latter, we observe that if $P(t)$ is an irreducible factor of $R_k(t)$ having all roots in $(-2,2)$ then $P(t)$ belongs to a finite set of polynomials. Since when $a\rightarrow \infty$ the roots of $R_k(t)$ converge to the roots of $C_{k-3}(t)(t^2-4)$, eventually $P(t)$ must divide $C_{k-3}(t)$. But $R_k(t)=1$ at the roots of $C_{k-3}(t)$ and hence they can not have a nontrivial common factor. Therefore no such $P(t)$ exists, and hence $R_{k}(t)$ is irreducible because it has only one root outside of $(-2,2)$.

The corresponding Salem polynomial $S_{2k}(t)=t^kR_k(t+1/t)$ gives a desired Salem number. The same proof shows that for even $k$, the polynomial 
$$R_{k}(t) = C_{k-2}(t)(x-2)(x-a) -1$$ 
is a Salem trace polynomial of degree $k$ for large integer $a$. So, again, the corresponding Salem polynomial $S_{2k}(t)$ gives a desired Salem 
number. 
\end{proof}
  
Now  we proceed to construct our examples. 

\begin{proposition}
For each integer $k \ge 2$, there are a projective complex $2k$-torus $X$ 
and $f \in {\rm Aut}\, (X)$ such that 

(i) $\lambda _1(f)=\lambda _2(f)=\ldots =\lambda _{k-1}(f)$ and it is 
a Salem number, yet

(ii) $X$ has no $f$-equivariant holomorphic fibration. 
\label{EvenDimensionTori}\end{proposition}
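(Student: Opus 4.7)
The plan is to build the example by starting from a Salem polynomial and imposing a compatible complex structure, then invoking Proposition \ref{twiceminimal}. First, by Lemma \ref{gm} I would pick a Salem number $\alpha$ of degree $2k$, with irreducible minimal polynomial $S_{2k}(t) \in \mathbb{Z}[t]$. Set $K := \mathbb{Q}(\alpha)$, $\mathcal{O} := \mathbb{Z}[\alpha]$, and $L := \mathcal{O} \oplus \mathcal{O}$, a free $\mathbb{Z}$-module of rank $4k$. Let $f_* \in \mathrm{End}(L)$ act as multiplication by $\alpha$ on each summand; its minimal polynomial is $S_{2k}(t)$ and its characteristic polynomial is $S_{2k}(t)^2$. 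Each root $\alpha_j$ of $S_{2k}(t)$ then gives a $2$-dimensional $f_*$-eigenspace $W_j \subset L \otimes_{\mathbb{Z}} \mathbb{C}$.

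Next I would choose a complex structure $J$ on $V := L \otimes_{\mathbb{Z}} \mathbb{R}$ commuting with $f_*$. Since $\alpha$ generates $K$ over $\mathbb{Q}$, the centralizer of $f_*$ in $\mathrm{End}_{\mathbb{R}}(V)$ is $M_2(K \otimes_{\mathbb{Q}} \mathbb{R})$, so such $J$ with $J^2 = -I$ form a real-analytic moduli space $\mathcal{J}$. For $J \in \mathcal{J}$ non-scalar on each $W_j$ (an open and dense condition), the subspace $V^{1,0} := \ker(J - iI) \subset V \otimes \mathbb{C}$ meets each $W_j$ in a line, so $F := f_*|_{V^{1,0}}$ has $2k = \dim_{\mathbb{C}} V^{1,0}$ pairwise distinct eigenvalues, namely the full set of roots of $S_{2k}(t)$. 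Within this open set, I would pick $J$ such that $X := V/L$ is projective. Given this choice, Proposition \ref{twiceminimal} applies directly---$m(t) = S_{2k}(t)$ is irreducible and $F$ has $n = 2k$ distinct eigenvalues---so $X$ has no $f$-equivariant holomorphic fibration, yielding (ii).

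For (i), since $f$ is holomorphic, $\lambda_p(f)$ equals the spectral radius of $f^*$ on $H^{p,p}(X) \cong \Lambda^p H^{1,0}(X) \otimes_{\mathbb{C}} \Lambda^p H^{0,1}(X)$. The eigenvalues of $f^*$ on $H^{1,0}(X)$ are the $2k$ roots of $S_{2k}(t)$: the Salem number $\alpha$, its reciprocal $\alpha^{-1}$, and $2k-2$ roots of modulus $1$. For $1 \le p \le 2k-1$ the maximum modulus of an eigenvalue of $f^*$ on $\Lambda^p H^{1,0}(X)$ is $\alpha$, attained by multiplying $\alpha$ with $p-1$ unit-modulus roots while excluding $\alpha^{-1}$, and the same holds for $\Lambda^p H^{0,1}(X)$. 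Hence $\lambda_p(f) = \alpha \cdot \alpha = \alpha^2$ for all $1 \le p \le 2k-1$, and by Lemma \ref{power}, $\alpha^2$ is a Salem number; in particular $\lambda_1(f) = \cdots = \lambda_{k-1}(f) = \alpha^2$ is Salem, which establishes (i).

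The principal obstacle is the projectivity step. The Salem field $K$ has two real embeddings and $k-1$ pairs of complex embeddings, so it is neither totally real nor CM, and the natural trace-pairing polarizations on $\mathcal{O} \oplus \mathcal{O}$ (or on $\mathcal{O} \oplus \mathcal{O}^*$ via the different) fail to be positive definite at the complex places. I would address this either by a density argument, showing that the locus of polarizable Hodge structures is a dense countable union of real-analytic subvarieties in $\mathcal{J}$ which meets the open set of generic $J$ from the second paragraph, or by a direct construction exploiting the Galois involution $\iota: \alpha \mapsto 1/\alpha$ of $K$ over its totally real subfield $K^+ := \mathbb{Q}(\alpha + 1/\alpha)$ to produce an explicit Riemann form after a suitable twist of the lattice and complex structure.
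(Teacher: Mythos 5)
Your construction starts from the right data: $L = \mathcal{O}\oplus\mathcal{O}$ with $f_*$ acting by $\alpha$ is exactly the lattice $\mathbf{Z}^{2k}\oplus\mathbf{Z}^{2k}$ with $f_*$ given by $M_{2k}\oplus M_{2k}$, the companion matrix of $S_{2k}(t)$. Your reduction to Proposition \ref{twiceminimal}, and your dynamical-degree computation giving $\lambda_1(f)=\cdots=\lambda_{2k-1}(f)=\alpha^2$ (Salem by Lemma \ref{power}), both match the paper's.

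The genuine gap is the projectivity step, which you correctly identify as the principal obstacle but then leave unresolved: you sketch two possible strategies (a density argument for polarizable Hodge structures, or an explicit Riemann form via the involution $\alpha\mapsto 1/\alpha$) without carrying either out. Neither is trivial, since the Salem field $K$ is neither totally real nor CM and the constraint of commuting with $f_*$ and keeping $F$ with $2k$ distinct eigenvalues is a nontrivial restriction on $J$. The paper sidesteps all of this with a much simpler choice: take $X = E^{2k}$ for an elliptic curve $E$, which is automatically projective. Any matrix in ${\rm GL}_{2k}(\mathbf{Z})$ acts as a group automorphism of $E^{2k}$, so $M_{2k}$ directly gives $f\in{\rm Aut}\,(X)$. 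Moreover, with this complex structure the action of $f$ on $H^0(X,\Omega_X^1)^*$ \emph{is} multiplication by $M_{2k}$ (under the obvious dual basis), so $F$ has all $2k$ distinct roots of $S_{2k}(t)$ as eigenvalues by fiat---no genericity argument over a moduli space $\mathcal{J}$ is needed. In short, you rebuilt the right lattice and the right automorphism abstractly but missed that the complex structure coming from $E^{2k}$ simultaneously delivers both projectivity and the eigenvalue-multiplicity hypothesis of Proposition \ref{twiceminimal} for free.
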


\begin{proof}
Let $E$ be an elliptic curve and consider $X = E^{2k}$. Then, we have a natural inclusion ${\rm GL}_{2k}({\mathbf Z}) \subset {\rm Aut}_{\rm group}(X)$. 
Note that this is not true for general complex torus. 

Choose a Salem number $\alpha$ of degree $2k$. Such number exists by Lemma \ref{gm}. Let
$$S_{2k}(t) = \sum_{i=0}^{2k} a_{2k-i}t^i \in {\mathbf Z}[t]$$ 
be the minimal polynomial of $\alpha$. Consider the following matrix
$$M_{2k} = \left(\begin{array}{rrrrrrr}
0 & 1 & 0 & \ldots &0 &  0  & 0 \\
0 & 0  & 1 & \ldots &0&0  & 0 \\
\ldots & \ldots  & \ldots & \ddots &\ldots&\ldots  &\ldots \\
\vdots & \vdots & \vdots &\vdots& 1 & \vdots & \vdots \\ 
0 & 0 & \ldots&\ldots & 0 & 1 & 0 \\
0 & 0 & \ldots&\ldots & 0 & 0 & 1\\
-a_{2k} & -a_{2k-1} & \ldots&\ldots & -a_{3} & -a_{2} & -a_{1}\\
\end{array} \right)\,\, .$$
Then $M_{2k} \in {\rm SL}_{2k}({\mathbf Z})$ and the characteristic polynomial of $M_{2k}$, as well as its minimal polynomial, is the Salem polynomial $S_{2k}(t)$. Let $f$ be the corresponding automorphism of $X$. 
Then the action of $f$ on 
$H^0(X, \Omega_X^1)^*$ is given by the multiplication of $M_{2k}$ under the natural obvious dual basis of $H^0(X, \Omega_X^1)$. Thus $f$ satisfies all the requirements of Proposition \ref{twiceminimal} and the result (ii) follows. The dynamical degrees of $f$ satisfy 
$$\lambda _1(f)=\ldots =\lambda _{2k-1}(f) = \alpha^2\,\, .$$ 
This follows from $H^{2d}(X, {\mathbf Z}) = \wedge^{2d} H^1(X, {\mathbf Z})$ and the fact that $\alpha$ is a Salem number. Since $\alpha$ is a Salem number, so is $\alpha^2$ by Lemma \ref{power}. 
\end{proof}

\section{Automorphisms of complex $3$-tori with $\lambda _1(f) = \lambda_2(f) >1$}
\label{SectionFirstSecondDymanicalDegree}
 
{\it Throughout this section and next section, we consider a $3$-torus 
$$X = {\mathbf C}^3/L = H^1(X, \Omega_X^1)^*/H_1(X, {\mathbf Z})$$ 
and its automorphism $f \in {\rm Aut}\, (X)$ such that 
$$\lambda_1(f) = \lambda_2(f) >1\,\, .$$}

By Theorem \ref{TheoremSalemNumberAgain}, the last condition is satisfied if $\lambda_1(f)$ is a Salem number. We also use the following notations in this section and next section.

\subsection{Notation.}
As in the previous section, we naturally identify 
$$L = H_1(X, {\mathbf Z})\,\, ,\,\, {\mathbf C}^3 = H^0(X, \Omega_X^1)^*
\,\, .$$
We denote the eigenvalues of $f_*\vert  H^0(X, \Omega_X^1)^*$, counted with multiplicities, by 
$$\alpha, \beta, \gamma\,\, .$$
Without loss of generality, we may and will assume that 
$$\vert \alpha \vert \ge \vert \beta \vert \ge \vert \gamma \vert\,\, .$$
Then the set of eigenvalues of the ${\mathbf C}$-linear extension of $f_*\vert  H_1(X, {\mathbf Z}) = f_* \vert L$, counted with multiplicities, is 
$${\mathcal S}_1 := \{\alpha, \beta, \gamma, \overline{\alpha}, \overline{\beta}, \overline{\gamma}\,\}\,\, ,$$
and therefore the set of eigenvalues of $f_*\vert  H_2(X, {\mathbf Z})$, counted with multiplicities, is  
$${\mathcal S}_2 := \{ \alpha\overline{\alpha}, \alpha\overline{\beta}, \alpha\overline{\gamma}, 
\beta\overline{\alpha}, \beta\overline{\beta}, \beta\overline{\gamma}, 
\gamma\overline{\alpha}, \gamma\overline{\beta}, \gamma\overline{\gamma}, 
\alpha\beta, \alpha\gamma, \beta\gamma, 
\overline{\alpha}\overline{\beta}, \overline{\alpha}\overline{\gamma}, 
\overline{\beta}\overline{\gamma}\, \}\,\, .$$ 
This is because $H_2(X, {\mathbf Z}) = \wedge^2 H_1(X, {\mathbf Z})$, and 
therefore, ${\mathcal S}_2 = \wedge^2{\mathcal S}_1$ in an obvious sense. 
Similarly, the set ${\mathcal S}_i$ of eigenvalues of $f_* \vert H_i(X, {\mathbf Z})$ is $\wedge^i{\mathcal S}_1$. 

Under the natural identification $H_i(X, {\mathbf Z}) = H^i(X, {\mathbf Z})^*$, we have
$$f_* \vert H_i(X, {\mathbf Z}) = (f^* \vert H^i(X, {\mathbf Z}))^t\,\, .$$
Thus, the $k$-th $\lambda_k(f)$ is the spectral radius of $f_* \vert H_{2k}(X, {\mathbf Z})$. Under the same identification, one can speak of the Hodge decomposition of $H_i(X, {\mathbf Z})$, naturally induced by the standard one on $H^i(X, {\mathbf Z})$. 
Note however that under th natural identification 
$H_i(X, {\mathbf Z}) = H_{6-i}(X, {\mathbf Z})^*$ by the Poincar\`e duality, i.e., via the cup product, we have
$$f_* \vert H_i(X, {\mathbf Z}) = 
((f^* \vert H_{6-i}(X, {\mathbf Z}))^t)^{-1}\,\, .$$
We denote the characteristic polynomial of $f_*\vert L$ by
$$\Phi(t) := {\rm det}\, (t\cdot id_L -f) \in {\mathbf Z}[t]\,\, .$$
This is a monic polynomial of degree $6$ with integer coefficients. 
The set of roots of $\Phi(t)$, counted with multiplicities, is exactly ${\mathcal S}_1$, and the minimal splitting field of $\Phi(t)$ is 
$$K := {\mathbf Q}(\alpha, \beta, \gamma, \overline{\alpha}, \overline{\beta}, \overline{\gamma})\,\, \subset\,\, {\mathbf C}\,\, .$$
We denote the Galois group of $\Phi(t)$, i.e., the Galois group of the field extension $K/{\mathbf Q}$ by ${\rm G_f}$. ${\rm G_f}$ naturally acts on ${\mathcal S}_1$, ${\mathcal S}_2$. ${\rm G_f}$ also acts on 
$$H_i(X, K) = H_i(X, {\mathbf Z}) \otimes_{\mathbf Z} K$$
by $\sigma \mapsto id_{H_i(X, {\mathbf Z})} \otimes \sigma$. 
We call ${\rm G_f}x$ ($x \in K$) the Galois orbit of $x$ and an element in ${\rm G_f}x$ a Galois conjugate of $x$. 

Needless to say, the action of $f_* \vert H_i(X, {\mathbf Z})$ preserves the Hodge decomposition, but the action of ${\rm G_f}$ on $H_i(X, K)$ does not necessarily preserve the Hodge decomposition. In what follows, both the {\it geometric action} of $f_*$ and the {\it algebraic action} of ${\rm G_f}$ play crucial roles. 
\subsection{General properties.}
We shall use the following basic facts frequently:
\begin{lemma}
1) $\vert\alpha \beta \gamma\vert = 1$. In particular, the constant term of $\Phi(t)$ is $1$. 

2) $\lambda_1(f) = \vert\alpha\vert^2$ and $\lambda_2(f) = \vert\alpha\vert^2\vert\beta\vert^2$. 

3) $\alpha$, $\beta$, $\gamma$ are mutually distinct. More strongly
$$\vert \alpha \vert > \vert \beta \vert = 1 > \vert \gamma \vert = \frac{1}{\vert \alpha \vert}\,\, .$$

4) The $K$-linear extension of $f_* \vert H_i(X, {\mathbf Z})$ is diagonalizable, and so is ${\mathbf C}$-linear extension. 

5) If $X$ is projective, then $\alpha \beta \gamma$ is a root of unity.

\label{basic}\end{lemma}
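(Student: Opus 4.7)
The plan is to dispatch the five claims in order, with (1)--(4) being direct consequences of the Hodge structure of the torus together with the hypothesis $\lambda_1(f)=\lambda_2(f)>1$, while (5) requires leveraging the polarization afforded by projectivity.

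For (1), since $f\in{\rm Aut}\,(X)$ the induced automorphism $f_*$ of $L\simeq\mathbf{Z}^6$ lies in $GL_6(\mathbf{Z})$, so $\det f_*=\pm 1$; the constant term of $\Phi(t)$ equals $(-1)^6\det f_*=\det f_*$, while computing via the eigenvalues gives $\Phi(0)=\prod_{z\in\mathcal{S}_1}z=(\alpha\beta\gamma)(\overline{\alpha\beta\gamma})=|\alpha\beta\gamma|^2\geq 0$, forcing both to equal $1$ and yielding $|\alpha\beta\gamma|=1$. For (2), on a torus $H^{p,q}(X)=\wedge^pH^{1,0}(X)\otimes\wedge^qH^{0,1}(X)$, so the eigenvalues of $f^*|H^{p,p}(X)$ are products of $p$ distinct elements of $\{\alpha,\beta,\gamma\}$ with $p$ distinct elements of $\{\overline\alpha,\overline\beta,\overline\gamma\}$; since $f$ is holomorphic, $\lambda_p(f)$ equals the spectral radius of $f^*|H^{p,p}$ as recalled in \S\ref{SubsectionDynamicalDegrees}, and with $|\alpha|\geq|\beta|\geq|\gamma|$ this maximum is $|\alpha|^2$ for $p=1$ and $|\alpha\beta|^2=|\alpha|^2|\beta|^2$ for $p=2$.

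For (3), the equality $\lambda_1(f)=\lambda_2(f)$ combined with (2) gives $|\beta|^2=1$; then $\lambda_1(f)>1$ gives $|\alpha|>1$, and (1) gives $|\gamma|=1/|\alpha|<1$. The three strict magnitudes immediately force $\alpha,\beta,\gamma$ to be mutually distinct. For (4), this distinctness makes $F=f_*|H^{1,0}(X)^*$ diagonalizable, hence so is $\overline F$, hence so is $f_*|H_1(X,\mathbf{C})=F\oplus\overline F$; exterior powers preserve diagonalizability, so $f_*|H_i(X,\mathbf{C})=\wedge^i(f_*|H_1)$ is diagonalizable for every $i$, and since $K$ contains all these eigenvalues the same holds over $K$.

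For (5), the main obstacle, set $\xi:=\alpha\beta\gamma$; by (1) this is an algebraic integer of absolute value $1$, so Kronecker's theorem reduces the claim to showing $|\sigma(\xi)|=1$ for every $\sigma\in G_f$. Using the magnitudes from (3), a direct check shows that a product of three elements of $\mathcal{S}_1$ has absolute value $1$ exactly when those elements form a transversal of the three complex-conjugation pairs $\{\alpha,\overline\alpha\},\{\beta,\overline\beta\},\{\gamma,\overline\gamma\}$; the task therefore reduces to proving that $\sigma\{\alpha,\beta,\gamma\}$ is such a transversal for every $\sigma\in G_f$, equivalently that $\sigma^{-1}c\sigma$ interchanges $\{\alpha,\beta,\gamma\}$ with $\{\overline\alpha,\overline\beta,\overline\gamma\}$, where $c$ denotes complex conjugation. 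The plan is to exploit the Riemann form $E:L\times L\to\mathbf{Z}$ attached to a polarization (which exists for projective $X$ but is not assumed $f_*$-invariant) together with the positivity of its associated Hermitian form on $H^{1,0}(X)^*$ in order to force $G_f$ to respect the Hodge splitting in this precise sense. Carrying out this Hodge--Galois analysis is where I expect the principal difficulty to lie.
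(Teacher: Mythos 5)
Your arguments for parts (1)--(4) are correct and essentially match the paper's: part (1) computes $\Phi(0)=\det f_*=|\alpha\beta\gamma|^2\in\{\pm 1\}$ in the same way; (2) and (3) extract the magnitude chain $|\alpha|>|\beta|=1>|\gamma|=1/|\alpha|$ from $\lambda_1=\lambda_2>1$ via the exterior-power description of eigenvalues (you restrict attention to $H^{p,p}$ rather than all of $H_{2p}$ as the paper does, which is a slightly sharper bookkeeping but yields the same conclusion); and (4) is the same diagonalizability argument via distinct eigenvalues of $F$ plus $f_*|L\otimes\mathbf{C}=F\oplus\overline{F}$ and exterior powers.

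Part (5) is where there is a genuine gap. Your reduction is sound --- by Kronecker it suffices to show every Galois conjugate of $\xi=\alpha\beta\gamma$ has modulus $1$, and your combinatorial analysis correctly shows this is equivalent to every $\sigma\in G_f$ sending $\{\alpha,\beta,\gamma\}$ to a transversal of the conjugation pairs. But you stop there, explicitly deferring the ``Hodge--Galois analysis,'' and that is precisely the nontrivial content. In fact the paper itself warns that ``the action of $G_f$ on $H_i(X,K)$ does not necessarily preserve the Hodge decomposition,'' so your hoped-for constraint that $\sigma^{-1}c\sigma$ always interchanges $\{\alpha,\beta,\gamma\}$ with $\{\overline\alpha,\overline\beta,\overline\gamma\}$ would have to come from a polarization argument that is not spelled out and whose feasibility is unclear. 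The paper sidesteps all of this: it observes that $\alpha\beta\gamma$ is the eigenvalue of $f_*$ on $H^0(X,\Omega_X^3)^*$ and then invokes Proposition 14.5 of Ueno, which states directly that for a projective variety the eigenvalue of an automorphism on the canonical line is a root of unity. You should replace your incomplete plan with that citation, or at minimum recognize that this is the input your argument would ultimately be re-deriving.
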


\begin{proof} Since $f_* \vert L$ is invertible over ${\mathbf Z}$ and $L$ is of even rank, it follows that 
$$0 < \vert\alpha\beta\gamma\vert^2 = \alpha\beta\gamma\overline{\alpha\beta\gamma} = \Phi(0) = {\rm det}\, (f) \in \{\pm 1\}\,\, ,$$
and the first assertion of 1) follows. 

Note that $\alpha \beta \gamma$ is 
the eigenvalue of $f_* \vert H^0(X, \Omega_X^3)^*$. Then, by Proposition 14.5 in \cite{Ue}, it is a root of unity when $X$ is projective, and 5) holds. 

By $\vert\alpha\vert \ge \vert\beta\vert \ge \vert\gamma\vert$ and by the facts that ${\mathcal S}_2 = \wedge^2{\mathcal S}_1$ and ${\mathcal S}_4 = \wedge^4{\mathcal S}_1$, 2) follows. Since $\lambda_1(f) = \lambda_2(f) >1$, 
it follows from 2) that $\vert\beta\vert = 1$ and $\vert \alpha \vert >1$. 
Hence, by 1), $\vert \gamma \vert = 1/\vert \alpha \vert$, and 3) follows. 

By 3), the eigenvalues of $f_* \vert H^0(X, \Omega_X^1)^*$ are mutually distinct. It follows that $f_* \vert H^0(X, \Omega_X^1)^*$ is diagonalizable. Since the ${\mathbf C}$-linear extension of $f_* \vert L$ is 
$$f_* \vert H^0(X, \Omega_X^1)^* \oplus \overline{f_* \vert H^0(X, \Omega_X^1)^*}\,\, ,$$ 
it follows that $f_* \vert L$ is diagonalizable over ${\mathbf C}$. 
Since all the eigenvalues are in $K$, it is already diagonalizable over 
$K$. 
Hence $f_* \vert H_i(X, {\mathbf Z}) = \wedge^i f_*\vert L$ is diagonalizable also over $K$. 
\end{proof}

\begin{theorem}

1) If $\Phi(t)$ admits a decomposition $\Phi(t) = F_1(t)F_2(t)$ such that $F_1(t), F_2(t) \in {\mathbf Z}[t]$, $1 \le {\rm deg}\, F_1(t) \le {\rm deg}\, F_2(t)$ and $F_1(t)$, $F_2(t)$ have no common root in ${\mathbf C}$, then $X$ admits $f$-equivariant holomorphic fibrations both over a complex $1$-torus $Y_1$ and complex $2$-torus $Y_2$ such that the characteristic polynomial of $(g_i)_* \vert H_1(Y_i, {\mathbf Z})$ is exactly $F_i(t)$ ($i = 1, 2$). 
Here $g_i$ is the induced automorphism of $Y_i$. In particular, ${\rm deg}\, F_1(t) =2$ and ${\rm deg}\, F_2(t) = 4$. 

2) $X$ admits no $f$-equivariant holomorphic fibration if and only if  $\Phi(t)$ is irreducible in ${\mathbf Z}[t]$. Moreover, in this case, all roots of $\Phi(t)$ are non-real. 

\label{DoubleFibration}
\end{theorem}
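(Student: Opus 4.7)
The plan for Part 1 is to apply Proposition \ref{twofibrations} directly to the given factorization $\Phi(t) = F_1(t) F_2(t)$. That proposition yields two $f$-equivariant holomorphic fibrations with ${\rm dim}\, Y_1 + {\rm dim}\, Y_2 = {\rm dim}\, X = 3$, where the dimension of each base is $3 - \tfrac{1}{2}{\rm deg}\, F_i$ for an appropriate indexing, and with both ${\rm deg}\, F_i$ even. Combined with ${\rm deg}\, F_1 + {\rm deg}\, F_2 = 6$, $1 \le {\rm deg}\, F_1 \le {\rm deg}\, F_2$, and parity, this forces ${\rm deg}\, F_1 = 2$ and ${\rm deg}\, F_2 = 4$. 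After relabeling the two fibrations if necessary so that the induced action $(g_i)_* \vert H_1(Y_i, \mathbf{Z})$ has characteristic polynomial exactly $F_i(t)$, we obtain ${\rm dim}\, Y_1 = 1$ and ${\rm dim}\, Y_2 = 2$, as required.

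For the $(\Leftarrow)$ direction of Part 2, suppose $\Phi(t)$ is irreducible over $\mathbf{Z}$. Then the minimal polynomial $m(t)$ is also irreducible and equal to $\Phi(t)$, and by Lemma \ref{basic}(3) the three eigenvalues $\alpha, \beta, \gamma$ of $F = f_* \vert H^0(X, \Omega_X^1)^*$ are pairwise distinct. Both hypotheses of Proposition \ref{twiceminimal} therefore hold, and $X$ admits no $f$-equivariant holomorphic fibration. For the \emph{moreover} clause, every irreducible polynomial in $\mathbf{Z}[t]$ is separable over $\mathbf{Q}$ (characteristic zero), so $\Phi$ has no repeated roots; if some $\lambda \in \{\alpha, \beta, \gamma\}$ were real, then $\lambda = \bar\lambda$ would appear at least twice in the multiset $\{\alpha, \beta, \gamma, \bar\alpha, \bar\beta, \bar\gamma\}$ of roots of $\Phi$, a contradiction.

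For the $(\Rightarrow)$ direction of Part 2 I argue contrapositively. Write $\Phi(t) = m_1(t)^{a_1} \cdots m_r(t)^{a_r}$ with distinct monic $\mathbf{Z}[t]$-irreducibles $m_j$. If $r \ge 2$ then $F_1 := m_1^{a_1}$ and $F_2 := \prod_{j \ge 2} m_j^{a_j}$ have no common complex root, so Part 1 yields a fibration. The main obstacle is the pure-power case $r = 1$, i.e., $\Phi = m^a$ with $m$ irreducible and $a \ge 2$; I will show this case is impossible by appealing to the moduli inequalities of Lemma \ref{basic}(3). Since $a \mid 6$ and $a \ge 2$, one has $a \in \{2, 3, 6\}$. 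For $a \in \{3, 6\}$, $\Phi$ would have at most two distinct roots, contradicting $|\alpha| > 1 = |\beta| > |\gamma|$. For $a = 2$, ${\rm deg}\, m = 3$ and the set of roots of $m$ is exactly $\{\alpha, \beta, \gamma\}$; this set is closed under complex conjugation since $m \in \mathbf{Z}[t]$. If all three are real then $\beta = \pm 1 \in \mathbf{Q}$ contradicts the irreducibility of $m$, while otherwise two of $\alpha, \beta, \gamma$ form a non-real conjugate pair of equal modulus, contradicting the strict inequalities. This completes the proof.
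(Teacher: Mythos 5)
Your proposal is correct and follows essentially the same route as the paper: Part 1 is a direct application of Proposition \ref{twofibrations} with a small bookkeeping step to match degrees and indices, the $(\Leftarrow)$ direction of Part 2 combines Proposition \ref{twiceminimal} with the separability of an irreducible polynomial in characteristic zero, and the $(\Rightarrow)$ direction reduces to ruling out a prime-power characteristic polynomial using Lemma \ref{basic}(3). One minor remark: in the $a=2$ subcase, the moduli constraints already force the three roots of $m$ to be real (a non-real root would be matched by its distinct conjugate of equal modulus, contradicting the three distinct moduli), so your ``otherwise'' branch is vacuous; but since you derive a contradiction from it in any event, the argument stands.
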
 

\begin{proof} 1) follows from Proposition \ref{twofibrations}. 
If $\Phi(t)$ is irreducible in ${\mathbf Z}[t]$, then it is also the minimal 
polynomial. Hence if part of 2) follows from Proposition \ref{twiceminimal} and Lemma \ref{basic} 3). If $X$ has no $f$-equivariant holomorphic fibration 
but $\Phi(t)$ would not be irreducible, then $\Phi(t) = F(t)^2$ for some irreducible $F(t) \in {\mathbf Z}[t]$ by 1) and Lemma \ref{basic} 3). However, then $\beta = \overline{\beta}$, whence $\beta = \pm 1$ again by Lemma \ref{basic} 3). Thus $F(t) = (t \pm 1) h(t)$ for some $h(t) \in {\mathbf Z}[t]$, a contradiction. 
So, if $X$ has no $f$-equivariant holomorphic fibration, then $\Phi(t)$ is irreducible in ${\mathbf Z}[t]$. In particular, the roots $\alpha$, $\beta$, $\gamma$ $\overline{\alpha}$, $\overline{\beta}$, $\overline{\gamma}$ are mutually distinct, whence, all are non-real. 
\end{proof}

\subsection{Proof of Theorem \ref{TheoremToriAgain}} 

In this subsection, we prove Theorem \ref{TheoremToriAgain} in the Introduction. We use the same notation as before. By the assumption of 
Theorem \ref{TheoremToriAgain}, $\lambda_1(f) = \vert\alpha\vert^2$ is a Salem number. We reduce the proof to Theorem \ref{DoubleFibration} by studying the action of the Galois group ${\rm G_f}$ on ${\mathcal S}_1$ and ${\mathcal S}_2$.

\begin{lemma}\label{galois}
$\beta$ is not a Galois conjugate of $\alpha$, i.e., there is no $\sigma \in {\rm G_f}$ such that $\beta = \sigma(\alpha)$.
\end{lemma}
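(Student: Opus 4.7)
The plan is to argue by contradiction and use the arithmetic of the Salem number $\lambda_1(f) = |\alpha|^2$ to rule out having $\beta$ in the Galois orbit of $\alpha$. Suppose, for contradiction, that there exists $\sigma \in \mathrm{G_f}$ with $\sigma(\alpha)=\beta$. The key object to consider is $\alpha\overline{\alpha}$, which by Lemma \ref{basic} 2) equals $\lambda_1(f)$, i.e. a Salem number.

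First I would analyze $\sigma(\alpha\overline{\alpha}) = \beta \cdot \sigma(\overline{\alpha})$. Since $\sigma(\overline{\alpha})$ is an eigenvalue of $f_*\vert L$, i.e. an element of $\mathcal{S}_1$, Lemma \ref{basic} 3) gives $|\sigma(\overline{\alpha})|\in\{|\alpha|,\,1,\,1/|\alpha|\}$, so $|\sigma(\alpha\overline{\alpha})|\in\{|\alpha|,\,1,\,1/|\alpha|\}$. On the other hand, $\sigma(\alpha\overline{\alpha})$ is a Galois conjugate of the Salem number $\lambda_1(f)$, so its absolute value is either $|\alpha|^2$, $1/|\alpha|^2$, or $1$. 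Comparing the two lists and using $|\alpha|>1$, the only possibility is $|\sigma(\alpha\overline{\alpha})|=1$ and $|\sigma(\overline{\alpha})|=1$, forcing $\sigma(\overline{\alpha})\in\{\beta,\overline{\beta}\}$. Since $\sigma$ is injective and $\sigma(\alpha)=\beta$, we must in fact have $\sigma(\overline{\alpha})=\overline{\beta}$.

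Consequently $\sigma(\alpha\overline{\alpha})=\beta\overline{\beta}=|\beta|^2=1$. So $1$ would be a Galois conjugate of the Salem number $\lambda_1(f)$, i.e. a root of the minimal polynomial of $\lambda_1(f)$ over $\mathbb{Q}$. This is the step that yields the contradiction: the minimal polynomial of a Salem number is, by definition, an irreducible reciprocal polynomial of even degree $\geq 4$ whose roots are $\lambda_1(f)$, $1/\lambda_1(f)$, and pairs of complex-conjugate unit-modulus roots, so $\pm 1$ cannot be a root (otherwise $t\mp 1$ would divide the minimal polynomial, contradicting irreducibility). Equivalently, if $1$ were a conjugate of $\lambda_1(f)$, then Kronecker's theorem together with the existence of the real conjugate $\lambda_1(f)>1$ would be violated.

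The main potential pitfall is the interplay between the complex-analytic conjugation $\alpha\mapsto\overline{\alpha}$ and the abstract Galois action of $\sigma$; I would emphasize that the whole argument only uses the identity $\alpha\overline{\alpha}=\lambda_1(f)$ and the multiplicativity $\sigma(\alpha\overline{\alpha})=\sigma(\alpha)\sigma(\overline{\alpha})$, together with the fact that $\overline{\alpha}\in\mathcal{S}_1$ because $\Phi(t)\in\mathbb{Z}[t]$, so no commutativity between $\sigma$ and complex conjugation is needed. Once the two constraints on $|\sigma(\overline{\alpha})|$ are set up correctly, the contradiction is immediate from the arithmetic of Salem polynomials.
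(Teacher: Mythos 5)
There is a genuine gap. Your argument uses injectivity of $\sigma$ to pass from $\sigma(\overline{\alpha})\in\{\beta,\overline{\beta}\}$ and $\sigma(\alpha)=\beta$ to $\sigma(\overline{\alpha})=\overline{\beta}$, but this deduction is only valid when $\alpha\neq\overline{\alpha}$. Nothing in the hypotheses of the lemma rules out $\alpha\in\mathbf R$ (indeed, Remark~\ref{Special} shows all roots are non-real only \emph{after} one knows there is no $f$-equivariant fibration, which is exactly what Theorem~\ref{TheoremToriAgain} is trying to establish via this lemma). If $\alpha=\overline{\alpha}$, then $\sigma(\overline{\alpha})=\sigma(\alpha)=\beta$ is perfectly compatible with injectivity, and $\sigma(\alpha\overline{\alpha})=\beta^2$, which need not equal $1$. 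At this point your contradiction (``$1$ would be a Galois conjugate of the Salem number'') does not materialize.

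The paper's proof handles precisely this by splitting into the two cases $\alpha\in\mathbf R$ and $\alpha\notin\mathbf R$. In the real case, one observes that $x:=\sigma(\alpha\overline{\alpha})=\beta^2$ lies in $\mathcal{S}_2\setminus\{1\}$ with $\vert x\vert=1$, which forces $x\in\{\alpha\gamma,\alpha\overline{\gamma}\}$; writing $x=\alpha y$ with $y\in\{\gamma,\overline{\gamma}\}$ and applying $\sigma$ once more gives $\sigma(x)=\beta\sigma(y)$, another Galois conjugate of the Salem number. Since $y\neq\alpha$ forces $\sigma(y)\neq\beta$, the modulus analysis (the same one you carried out) forces $\sigma(y)=\overline{\beta}$ and hence $\sigma(x)=\beta\overline{\beta}=1$, which contradicts the fact that $1$ is not a Galois conjugate of $\vert\alpha\vert^2$. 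Your treatment of the non-real case matches the paper's Case $\alpha\notin\mathbf R$ up to phrasing, so you need only supply the missing second application of $\sigma$ for the real case to complete the proof.
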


\begin{proof} Since $\vert \alpha \vert^2$ is a Salem number, it follows from the description of the set ${\mathcal S}_2$ 
that $\vert \gamma \vert^2$ and $\vert \alpha \vert^2$ are Galois conjugates of $\vert \alpha \vert^2 = \alpha\overline{\alpha}$ and all other Galois conjugates $\delta$ of $\vert \alpha \vert^2$ satisfy $\delta \in {\mathcal S}_2$, $\vert \delta \vert = 1$ and $\delta \not\in {\mathbf Q}$. In particular, $\beta\overline{\beta} = 1$ is not a Galois conjugate of $\vert \alpha \vert^2$. 

{\it Assume to the contrary that there is $\sigma \in {\rm G_f}$ such that $\beta =\sigma ( \alpha )$}. Set $\epsilon := \sigma (\overline{\alpha})$ and 
$$x := {\sigma}(\alpha\overline{\alpha}) = {\sigma}(\alpha ) {\sigma}(\overline{\alpha}) = \beta\epsilon\, .$$ 
Then $x \in {\mathcal S}_2 \setminus \{\beta \overline{\beta} = 1\}$.

First consider the case where $\alpha \in {\mathbf R}$, i.e., the case where 
$\alpha = \overline{\alpha}$. Then $x = \beta^2$ and therefore 
$\vert x \vert = 1$. 
Thus $x$ is either $\alpha\gamma$ or $\alpha\overline{\gamma}$ by the description of ${\mathcal S}_2$ and the fact that $\alpha$ is real. So, we can write $x = \alpha y$, where $y$ is either $\gamma$ or $\overline{\gamma}$. Then ${\sigma}(x) = {\sigma}(\alpha y) = \beta  {\sigma}(y)$ is also Galois conjugate of the Salem number $\vert \alpha \vert^2$. Hence $\vert {\sigma}(y ) \vert$ is either $\vert \gamma \vert^2$, $\vert \alpha \vert^2$ or $1$. By $\alpha \not= y$, we have ${\sigma}(y) \in {\mathcal S}_1 \setminus \{\beta\}$. Hence by Lemma \ref{basic} 3), we have ${\sigma}(y) = \overline{\beta}$. However, then ${\sigma}(x) = \beta \overline{\beta} = 1$, 
a contradiction. 
 
Next consider the case where $\alpha \not\in {\mathbf R}$, i.e., the case 
where 
$\alpha \not= \overline{\alpha}$. Since ${\mathcal S}_1$ is preserved by 
${\rm G_f}$ and $\alpha \not= \overline{\alpha}$, it follows that $\epsilon = {\sigma}(\overline{\alpha}) \in {\mathcal S}_1 \setminus \{\beta\}$. Therefore, by the description of 
${\mathcal S}_2 \setminus \{\beta \overline{\beta} = 1\}$, our $x$ is either one of 
$$\beta\alpha\, ,\, \beta\gamma\, , \, \beta\overline{\alpha}\, ,\, \beta\overline{\gamma}\, .$$
However, then $\vert x \vert \not= 1$, $\vert x \vert \not= \vert \alpha \vert^2$ and $\vert x \vert \not= \vert \gamma \vert^2$ again by 
Lemma \ref{basic} 3), 
a contradiction to the fact that $x$ is a Galois conjugate of the Salem number $\vert \alpha \vert^2$. 

Hence $\beta$ is not a Galois conjugate of $\alpha$. 
\end{proof}

Hence we have a decomposition $\Phi(t) = F_1(t)F_2(t)$, $F_1(t), F_2(t) \in {\mathbf Z}[t] \setminus {\mathbf Z}$ such that $F_1(t)$ and $F_2(t)$ have no common root. Now Theorem \ref{TheoremToriAgain} follows from Theorem \ref{DoubleFibration}, 1). 

\begin{remark} By Proposition \ref{EvenDimensionTori}, a similar result does not hold for even dimensional complex tori. It is interesting to see whether a similar statement is true for higher odd dimensional complex tori $X$ or not. \label{HigherDimensionalOddTori}
\end{remark}

\subsection{Some consequences.} 

\begin{corollary}
Let $X$ be a complex $3$-torus with $f \in {\rm Aut}\, (X)$ such that 
$\lambda_1(f)>1$. Here we do NOT assume $\lambda_1(f) = \lambda_2(f)$ 
from the beginning. Nevertheless, the following conditions are equivalent: 

(i) $X$ has an $f$-equivariant holomorphic fibration.

(ii) $\lambda _1(f)$ is a Salem number.
\label{CorollaryNonTrivialFibrationDimension3}\end{corollary}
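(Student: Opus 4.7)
The plan splits along the equivalence. The direction (ii) $\Rightarrow$ (i) is exactly Theorem \ref{TheoremToriAgain}, so I would dispose of it in one sentence.

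For (i) $\Rightarrow$ (ii), I would start from a non-trivial $f$-equivariant holomorphic fibration $\pi : X \to Y$. By Lemma \ref{Ueno}, $Y$ is automatically a complex torus, so $\dim Y \in \{1, 2\}$. The first step is to invoke Lemma \ref{LemmaPseudoAutomorphisSalemNumber}(i) to upgrade the hypothesis to $\lambda_1(f) = \lambda_2(f) > 1$; this places us in the standing setup of Section \ref{SectionFirstSecondDymanicalDegree}, so Theorem \ref{DoubleFibration} and Lemma \ref{basic} become available. When $\dim Y = 2$, Lemma \ref{LemmaPseudoAutomorphisSalemNumber}(ii) applies directly: it forces $\lambda_1(f)$ to be $1$ or a Salem number, and since $\lambda_1(f) > 1$ it must be Salem.

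The case $\dim Y = 1$ I would handle by a bootstrap: I would produce a second $f$-equivariant fibration $\pi_2 : X \to Y_2$ with $\dim Y_2 = 2$, then apply the case just settled. For this, the contrapositive of Theorem \ref{DoubleFibration}(2) already gives that $\Phi(t)$ is reducible in $\mathbb{Z}[t]$; what remains is to promote this reducibility to a \emph{coprime} factorization $\Phi = F_1 F_2$, after which Theorem \ref{DoubleFibration}(1) delivers fibrations over both a $1$-torus and a $2$-torus, and I take $\pi_2$ to be the latter.

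The only real (minor) obstacle is to exclude the possibility $\Phi(t) = F(t)^n$ for some irreducible $F \in \mathbb{Z}[t]$ with $n \ge 2$, and this is essentially the same verification already made inside the proof of Theorem \ref{DoubleFibration}(2). Concretely, Lemma \ref{basic}(3) gives $|\alpha| > |\beta| = 1 > |\gamma|$, so among the six roots $\alpha, \beta, \gamma, \overline{\alpha}, \overline{\beta}, \overline{\gamma}$ of $\Phi$ no multiplicity exceeds $2$, which kills $n \ge 3$ at once. For $n = 2$, i.e.\ $\deg F = 3$ with every root of $\Phi$ double, all of $\alpha, \beta, \gamma$ would have to be real, forcing $\beta = \pm 1$ (by $|\beta|=1$) and so $(t \mp 1) \mid F$, contradicting the irreducibility of $F$. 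With the coprime factorization secured, Theorem \ref{DoubleFibration}(1) yields $\pi_2$, and Lemma \ref{LemmaPseudoAutomorphisSalemNumber}(ii) applied to $\pi_2$ closes the argument.
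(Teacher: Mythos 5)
Your proof is correct, and its skeleton matches the paper's: (ii)\,$\Rightarrow$\,(i) is Theorem~\ref{TheoremToriAgain}, and for (i)\,$\Rightarrow$\,(ii) both arguments split on $\dim Y$, dispose of $\dim Y = 2$ via Lemma~\ref{LemmaPseudoAutomorphisSalemNumber}(ii), and handle $\dim Y = 1$ by manufacturing a second fibration over a $2$-torus. The difference is how that $2$-torus fibration is produced. The paper's route is more direct: it observes that the eigenvalue of $g_*$ on $H^0(Y,\Omega_Y^1)^*$ is a root of unity whose minimal polynomial over $\mathbf Z$ has degree at most $2$, that this eigenvalue is inherited by $f_*$ (since $\pi_*$ is $f$-equivariant, surjective, and compatible with weight-one Hodge structures), and that it cannot be Galois conjugate to an eigenvalue $\alpha$ with $|\alpha|>1$; hence the minimal polynomial $m(t)$ already falls in Case~(a) of Theorem~\ref{decomposition}, and Proposition~\ref{twofibrations} gives the $2$-torus fibration, without ever invoking $\lambda_1(f)=\lambda_2(f)$ or the characteristic polynomial. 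Your route instead first upgrades to $\lambda_1(f)=\lambda_2(f)>1$ via Lemma~\ref{LemmaPseudoAutomorphisSalemNumber}(i), uses the contrapositive of the ``if'' direction of Theorem~\ref{DoubleFibration}(2) to get $\Phi$ reducible, and re-verifies the exclusion of $\Phi=F(t)^n$ (the same moduli computation that appears inside the paper's proof of Theorem~\ref{DoubleFibration}(2)) before applying Theorem~\ref{DoubleFibration}(1). This is sound, though slightly more indirect: you are effectively re-deriving a fragment of Theorem~\ref{DoubleFibration}(2) rather than exploiting the rigidity of the one-dimensional base, which is what lets the paper cut straight to a coprime factorization of $m(t)$.
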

\begin{proof}

That (ii) implies (i) is proved in Theorem \ref{TheoremToriAgain}.

We prove that (i) implies (ii). Let $\pi : X \rightarrow Y$ be an $f$-equivariant holomorphic fibration and $g$ be the automorphism of $Y$ induced by $f$. 
We have two cases : 

{\it Case (a)} ${\rm dim}\, Y = 2$, {\it Case (b)} ${\rm dim}\, Y = 1$. 

{\it Case (a).} This follows from part ii) of Lemma \ref{LemmaPseudoAutomorphisSalemNumber}.  

{\it Case (b).} It suffices to show that $X$ has an $f$-equivariant holomorphic fibration also over complex $2$-torus. 

Let $\alpha$, $\beta$, $\gamma$ be the eigenvalues of $f_* \vert H^0(X, \Omega_X^1)^*$ counted with multiplicities. Since $Y$ is a complex $1$-torus, the eigenvalue of $g_*$ on $H^0(Y, \Omega_Y^1)^{*}$ are one of $\exp(2\pi i/k)$ with $k = 1, 2, 4, 6$. Their minimal polynomials are of degree $\le 2$ over $\mathbf Z$. It is also an eigenvalue of $f_* \vert H^0(X, \Omega_X^1)^*$, because the homomorphism $\pi_* : H_1(X, {\mathbf Z}) \rightarrow H_1(Y, {\mathbf Z})$ is $f$-equivariant, surjective and preserves Hodge structures of weight one. 

On the other hand, some eigenvalue, say $\alpha$, of $f_* \vert H^0(X, \Omega_X^1)^*$ satisfies $\vert \alpha \vert >1$. This is because $\lambda_1(f) >1$. In particular, $\alpha$ is not a Galois conjugate 
of $\beta$. Hence, the minimal polynomial $m(t) \in {\mathbf Z}[t]$ of $f_*$ 
satisfies Case (a) in the proof of Theorem \ref{decomposition}. Thus, by Proposition \ref{twofibrations}, we have an $f$-equivariant holomorphic fibration $X \rightarrow Y'$ also over a complex $2$-torus, as desired.
\end{proof}

The following corollary classifies the Salem first dynamical degrees of automorphisms of complex $3$-tori:

\begin{corollary}
Let ${\it Sal}(n)$ be the set of Salem first dynamical degrees of automorphisms of complex $n$-tori. Then ${\it Sal}(3) = {\it Sal}(2)$. 

\label{Corollary3TorusSalemNumber}\end{corollary}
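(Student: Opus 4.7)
The plan is to verify both inclusions ${\it Sal}(2) \subseteq {\it Sal}(3)$ and ${\it Sal}(3) \subseteq {\it Sal}(2)$ directly.

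For ${\it Sal}(2) \subseteq {\it Sal}(3)$, I would use a product construction. Given a complex $2$-torus $Y$ with $g \in {\rm Aut}(Y)$ such that $\lambda_1(g) = \lambda$ is a Salem number, pick any elliptic curve $E$ and set $X := Y \times E$ and $f := g \times {\rm id}_E \in {\rm Aut}(X)$. Then $X$ is a complex $3$-torus. Applying the relative dynamical degree formula recalled in the proof of Lemma \ref{LemmaPseudoAutomorphisSalemNumber} to the trivial projection $\pi : X \to Y$ (so that $\lambda_i(f|\pi) = \lambda_i({\rm id}_E)$) yields
$$\lambda_1(f) = \max\{\lambda_1(g)\lambda_0({\rm id}_E),\, \lambda_0(g)\lambda_1({\rm id}_E)\} = \max\{\lambda, 1\} = \lambda,$$
since $\lambda > 1$. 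Hence $\lambda \in {\it Sal}(3)$.

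For the reverse inclusion ${\it Sal}(3) \subseteq {\it Sal}(2)$, the heavy lifting has already been done in the paper. Suppose $\lambda = \lambda_1(f)$ is a Salem number (so in particular $\lambda > 1$) for some $f \in {\rm Aut}(X)$ with $X$ a complex $3$-torus. By the stronger conclusion recorded in Remark 1 immediately following Theorem \ref{TheoremToriAgain}, $X$ admits an $f$-equivariant holomorphic fibration $\pi : X \to Y_2$ onto a complex $2$-torus $Y_2$, with induced automorphism $g_2 \in {\rm Aut}(Y_2)$ satisfying $\lambda_1(f) = \lambda_1(g_2)$. Therefore $\lambda = \lambda_1(g_2) \in {\it Sal}(2)$.

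I do not anticipate a genuine obstacle here: the forward inclusion is a bookkeeping exercise with the product formula, and the reverse inclusion is essentially a restatement of the strengthened version of Theorem \ref{TheoremToriAgain}. The two points worth double-checking are, first, that the relative dynamical degree formula applies to the trivial product fibration to give $\lambda_1(g \times {\rm id}_E) = \lambda_1(g)$ (this is the Dinh--Nguyen / Dinh--Nguyen--Truong result already invoked in the paper), and second, that Remark 1 truly supplies the equality $\lambda_1(f) = \lambda_1(g_2)$ with $Y_2$ a $2$-torus --- which is precisely its content.
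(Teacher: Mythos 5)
Your proof is correct and follows essentially the same route as the paper: the forward inclusion via the product $X = Y \times E$ with $f = g \times \mathrm{id}_E$, and the reverse inclusion via the strengthened form of Theorem \ref{TheoremToriAgain} (Remark 1) giving an $f$-equivariant fibration onto a $2$-torus with the same first dynamical degree. The only cosmetic difference is that you explicitly invoke the relative dynamical degree formula to justify $\lambda_1(g\times\mathrm{id}_E)=\lambda_1(g)$, where the paper takes this for granted; both are fine.
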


\begin{remark} 
Note that the first dynamical degree of an automorphism of complex $2$-tori 
is a Salem number if and only if it is greater than $1$. Reschke 
gives a complete description of ${\it Sal}(2)$ in terms of the corresponding 
Salem polynomials (\cite{reschke}).
\label{Reschke}\end{remark}

\begin{proof}

Proof of ${\it Sal}(2) \subset  {\it Sal}(3)$: Let $E$ be a complex $1$-torus 
and $Y$ be a complex $2$-torus, Then, we have $\lambda_1(f) = \lambda_1(g)$ for $X=Y\times E$ and $f=(g,id_E) \in {\rm Aut}\, (X)$, and the result follows. 

Proof of ${\it Sal}(3) \subset {\it Sal}(2)$: Assume that $\lambda_1(f) \in {\it Sal}(3)$ for an automorphism $f$ of a complex $3$-torus $X$. Then, by Theorem 
\ref{TheoremToriAgain}, 
$X$ admits an $f$-equivariant holomorphic fibration $\pi : X \rightarrow Y$ 
over complex $2$-tori $Y$ 
such that $\lambda_1(g) = \lambda(f)$ for the induced automorphism $g$ of $Y$. 
This implies the result.
\end{proof}

\section{Special polynomials of degree $6$ - Geometry and Algebra}

In this section we introduce the notion of a {\it special polynomial} of degree $6$ 
and study their properties in both geometric representation of automorphisms of complex $3$-tori and algebraic representation of the Galois group. This is crucial in our proof of Theorems \ref{TheoremExamples} and 
\ref{TheoremPicardNumber} in Introduction.

\subsection{Special polynomials and complex $3$-tori}

Let $p(t)$ be a monic irreducible polynomial in $\mathbf{Z}[t]$ of degree $6$ 
such that $p(0)=1$. Let 
$${\mathcal S} := \{\zeta _1\,\, , \,\,\zeta _2\,\, , \,\,\zeta _3\,\, , \,\, \zeta _4\,\, ,\,\, \zeta _5\,\, ,\,\, \zeta _6\}$$ 
be the set of roots of $p(t)$ in ${\mathbf C}$. Here $\zeta_i$ 
are mutually distinct, $\zeta_i \not= \pm 1$, because $p(t)$ is irreducible 
in ${\mathbf Z}[t]$. We call $p(t)$ {\it special} if in addition the following 
condition (A) is satisfied (after re-ordering $\zeta_i$ suitably):

(A) $|\zeta _1|=|\zeta _2|=1$, $|\zeta _3|=|\zeta _6|>1$ and $|\zeta _4|=|\zeta _5|<1$ and all $\zeta_i$ are non-real.

\begin{remark}
1) $|\zeta _1|=|\zeta _2|=1$ already implies $\zeta_1$, $\zeta_2$ are non-real; otherwise some of them is $\pm 1$, a contradiction to the irreducibility of 
$p(t)$ in ${\mathbf Z}[t]$. 

2) $\zeta_2 = \overline{\zeta_1}$, $\zeta_6 = \overline{\zeta_3}$ and $\zeta_4 = \overline{\zeta_5}$ because $p(t)$ is of real coefficients so that if non-real $\zeta_i$ is a root of $p(t)$, then so is $\overline{\zeta_i}$. By $\zeta_2 = \overline{\zeta_1}$ and $|\zeta _1|=1$, we have $\zeta_1\zeta_2 = 1$. This will be very important in the next subsection. 

3) Let $f$ be an automorphism of a complex $3$-tori $X$. If $\lambda_1(f) = \lambda_2(f) > 1$ and $X$ has no $f$-equivariant holomorphic fibration, then the characteristic polynomial $\Phi(t)$ of $f_* \vert L$ is a special polynomial by Lemma \ref{basic} 3) and Theorem \ref{DoubleFibration} 2). Note that the fact that all roots are non-real also follows from Theorem \ref{DoubleFibration} 2). 
\label{Special}\end{remark}

Let $p(t)$ be a special polynomial with the roots $\zeta_i$ satisfying condition (A). We re-name $\zeta_i$ so that
$$\beta := \zeta_1\,\, ,\,\, \alpha := \zeta_3\,\, ,\,\, \gamma := \zeta_5\,\, .$$
Then $\zeta_2 = \overline{\beta}$, $\zeta_4 = \overline{\alpha}$ and 
$\zeta_6 = \overline{\gamma}$.

\begin{proposition}
Given a special polynomial $p(t)$, there are complex $3$-tori $X = {\mathbf C}^3/L$ and its automorphism $f$ such that the characteristic polynomial of $f_* \vert L$ is $p(t)$, $\lambda_1(f) = \lambda_2(f) >1$ and $X$ has no $f$-equivariant holomorphic fibration.
\label{Standard}\end{proposition}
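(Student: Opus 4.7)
The plan is to build $(X, f)$ by realizing $p(t)$ as the characteristic polynomial of an integer matrix acting on a lattice, and then equipping $L\otimes\mathbf{R}$ with an ad hoc complex structure so that exactly the ``right'' three eigenvalues lie on the holomorphic tangent space. Let $M\in \mathrm{SL}_6(\mathbf{Z})$ be the companion matrix of $p(t)$; since $\deg p = 6$ and $p(0)=1$ we have $\det M = 1$. Put $L:=\mathbf{Z}^6$ with $M$-action. Because $p(t)$ is irreducible of degree $6$, its six roots are simple, and by condition (A) they pair as conjugate pairs $\{\beta,\overline{\beta}\}$, $\{\alpha,\overline{\alpha}\}$, $\{\gamma,\overline{\gamma}\}$ with $|\alpha|>1$, $|\beta|=1$, $|\gamma|=1/|\alpha|$. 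Pick eigenvectors $u_\beta,u_\alpha,u_\gamma\in L\otimes\mathbf{C}$ of $M$ for $\beta,\alpha,\gamma$; their complex conjugates $\overline{u_\beta},\overline{u_\alpha},\overline{u_\gamma}$ are eigenvectors for $\overline{\beta},\overline{\alpha},\overline{\gamma}$.

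Set $V := \mathbf{C}u_\beta\oplus \mathbf{C}u_\alpha\oplus \mathbf{C}u_\gamma\subset L\otimes\mathbf{C}$. The six eigenvectors form a basis of $L\otimes\mathbf{C}$, so $V\cap\overline{V}=0$ and $L\otimes\mathbf{C}=V\oplus\overline{V}$; this is a weight-one Hodge structure on $L$. The projection $L\otimes\mathbf{R}\to V$ along $\overline{V}$ is an $\mathbf{R}$-linear isomorphism, embedding $L$ as a rank-$6$ lattice in $V\cong \mathbf{C}^3$. Define $X := V/L$, a complex $3$-torus with $H_1(X,\mathbf{Z})=L$ and $H^0(X,\Omega_X^1)^*=V$. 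Since $V$ is $M$-invariant, $M|_V$ is $\mathbf{C}$-linear and descends to an automorphism $f\in\mathrm{Aut}\,(X)$ with $f_*|_L=M$. By construction the characteristic polynomial of $f_*|_L$ is exactly $p(t)$, and the eigenvalues of $f_*|_{H^0(X,\Omega_X^1)^*}$ are $\alpha,\beta,\gamma$.

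It then remains to verify $\lambda_1(f)=\lambda_2(f)>1$ and that $X$ admits no $f$-equivariant holomorphic fibration. Since $f$ is holomorphic, $\lambda_p(f)$ equals the spectral radius of $f^*\vert H^{p,p}(X)$; for a complex torus $H^{p,p}(X)=\wedge^p H^{1,0}(X)\otimes\wedge^p H^{0,1}(X)$, so its eigenvalues are products of $p$ elements from $\{\alpha,\beta,\gamma\}$ and the complex conjugates of $p$ such elements. Using $|\alpha|>1=|\beta|>|\gamma|=1/|\alpha|$, a direct inspection gives $\lambda_1(f)=|\alpha\overline{\alpha}|=|\alpha|^2$ and $\lambda_2(f)=|\alpha\beta|^2=|\alpha|^2$, so $\lambda_1(f)=\lambda_2(f)>1$. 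The non-existence of an $f$-equivariant holomorphic fibration then follows immediately from Theorem \ref{DoubleFibration}(2), because $\Phi(t)=p(t)$ is irreducible in $\mathbf{Z}[t]$. The only mildly delicate step is the Hodge-theoretic one: we must pick exactly one eigenvector from each conjugate pair to form $V$, and the requirement $V\cap\overline{V}=0$ (without which $V$ would not define a complex structure on $L\otimes\mathbf{R}$) is automatic from the simplicity of the spectrum of $M$, which is in turn a consequence of the irreducibility of $p(t)$.
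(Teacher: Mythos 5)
Your construction is essentially the paper's: the companion matrix of $p(t)$ on $\mathbf{Z}^6$ is precisely the matrix of multiplication by $t$ on $\mathbf{Z}[t]/(p(t))$ in the basis $1,t,\dots,t^5$, and choosing $V = \mathbf{C}u_\alpha\oplus\mathbf{C}u_\beta\oplus\mathbf{C}u_\gamma$ reproduces the paper's choice of $V(\alpha)\oplus V(\beta)\oplus V(\gamma)$; the remaining steps (dynamical degrees via $|\alpha|>1=|\beta|>|\gamma|$, non-existence of fibration via Theorem \ref{DoubleFibration}(2)) are identical. One small imprecision in your closing sentence: $V\cap\overline{V}=0$ is \emph{not} automatic from simplicity of the spectrum alone — a simple \emph{real} eigenvalue $\alpha=\overline{\alpha}$ would force $\mathbf{C}u_\alpha=\mathbf{C}\overline{u_\alpha}$ and ruin the decomposition. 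What makes the choice work is that condition (A) forces all six roots to be non-real, so that $\{\alpha,\beta,\gamma\}$ and $\{\overline{\alpha},\overline{\beta},\overline{\gamma}\}$ are disjoint and complex conjugation genuinely swaps $V$ with $\overline{V}$. You do invoke (A) earlier when pairing the roots, so the argument is sound; only the attribution at the end should be corrected to cite non-realness (from (A)) together with simplicity.
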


\begin{proof} Let us consider the quotient ${\mathbf Z}$-module:
$$M := {\mathbf Z}[t]/(p(t))\,\, .$$
$M$ is a free ${\mathbf Z}$-module of rank $6$ and we have an automorphism $F \in {\rm Aut}\, (M)$ induced by the natural multiplication of $t$:
$$c(t)\, {\rm mod}\, (p(t))\,\, \mapsto\, tc(t)\, {\rm mod}\, (p(t))\,\, .$$
Here we note that the constant term of $p(t)$ is $1$ so that $F$ is an automorphism of $M$. The minimal polynomial of $F$ is $p(t)$ by definition. Let $V(\delta)$ be the eigenspace of the ${\mathbf C}$-linear extension of $F$ of the eigenvalue $\delta \in {\mathcal S}$. Then ${\rm dim}\, V(\delta) = 1$ and 
$$M \otimes {\mathbf C} = (V(\alpha) \oplus V(\beta) \oplus V(\gamma)) 
\oplus \overline{(V(\alpha) \oplus V(\beta) \oplus V(\gamma))}\,\, .$$
Since none of $\delta$ is real by (A), it follows that 
$$M/ (M \cap \overline{(V(\alpha) \oplus V(\beta) \oplus V(\gamma))}) \simeq M$$ is discrete and of rank $6$ in 
$$(M \otimes {\mathbf C})/
\overline{(V(\alpha) \oplus V(\beta) \oplus V(\gamma))} \simeq (V(\alpha) \oplus V(\beta) \oplus V(\gamma))\,\, .$$
Hence 
$$X := (M \otimes {\mathbf C})/(\overline{(V(\alpha) \oplus V(\beta) \oplus V(\gamma))} + M) \simeq (V(\alpha) \oplus V(\beta) \oplus V(\gamma))/M$$
is a complex $3$-torus. By construction, the action of $F$ on $M$ naturally descends to an automorphism $f$ of $X$ with $f_* = F$ on $H_1(X, {\mathbf Z}) = M$. For this $f$, we have $\lambda_1(f) = \vert \alpha \vert^2$, $\lambda_2(f) =  \vert \alpha \vert^2 \vert\beta \vert^2$ and therefore $1 < \lambda_1(f) = \lambda_2(f)$. Since $p(t)$ is irreducible in ${\mathbf Z}[t]$, $X$ has no $f$-equivariant holomorphic fibration by Theorem \ref{DoubleFibration} 2).
\end{proof}

\begin{remark}
1) In general, there are several ways to construct $(X, f)$ in Proposition \ref{Standard}. We call the construction in the proof a {\it standard construction} from $p(t)$. 
We note that any $(X, f)$ satisfying the condition of Proposition \ref{Standard} is isogenous to the standard one from $p(t)$, up to the $8$ possible choices of $\alpha$, $\beta$, $\gamma$ from ${\mathcal S}$. In fact $L = H_1(X, {\mathbf Z})$ is 
torsion free $R$-module of rank $1$. Here the ring $R$ is defined by 
$$R = {\mathbf Z}[t]/(p(t)) \simeq {\mathbf Z}[f_{*}] \subset 
{\rm End}_{\mathbf Z}\, (L)\,\, .$$ 
Hence $L \otimes{\mathbf Q }$ is isomorphic to 
$$M \otimes {\mathbf Q} = {\mathbf Q}[t]/(p(t)) \simeq {\mathbf Q}[f_{*}]$$ as ${\mathbf Q}[t]/(p(t))$-modules. This is because ${\mathbf Q}[t]/(p(t))$ is a field. Hence $(X, f)$ is isogenous to one of the $8$ standard ones. If in addition $R$ is PID, then $L$ is isomorphic to ${\mathbf Z}[t]/(p(t))$ as $R$-modules, so that $(X, f)$ is isomorphic to one of the $8$ standard ones.  

2) Geometric properties of $(X, f)$, for instance, projectivity, 
Picard number, are not so apparent even for the standard ones from $p(t)$. In order to clarify this, we need to study the Galois groups of special polynomials. This is the main topics of the next subsection. Note that projectivity, 
Picard number are invariant under isogenies. So, we may study them only for standard ones. 
\label{StadardRemark}\end{remark}

\subsection{Galois group of some special polynomials of degree $6$}
Let $p(t)$ be a special polynomial with the roots $\zeta_i$ with condition (A).
In this subsection we study the Galois group $G$ of the splitting field 
$$K = {\mathbf Q}(\zeta _1,\zeta _2,\zeta _3, \zeta _4, \zeta _5,\zeta _6)$$
of $p(t)$ over ${\mathbf Q}$. $G$ naturally acts on the set of roots 
${\mathcal S}$ and it is transitive by the irreducibility of $p(t)$. 
Hence $G$ is a transitive subgroup of 
$S_6 = {\rm Aut}_{\rm set}({\mathcal S})$.  

In addition to the condition (A), which is always assumed, we also consider the following conditions occasionally: 

(AR) $\zeta _3\zeta _4=\zeta _5\zeta _6=1$.  

(AP) $\zeta _1\zeta _3\zeta _5=1$. 

(AP), "{\it projectivity assumption}", is a condition closely related to 
the fact in Lemma \ref{basic} 5). 

The condition (AR), "{\it reciprocal assumption}", is an algebraic condition. 
Recall that $\zeta_1\zeta_2 = 1$. Hence (AR) implies that $p(t)$ is {\it reciprocal}, i.e., $p(t)$ is written as
\begin{equation}
p(t)=t^3q(t+\frac{1}{t})\,\, ,
\label{EquationReciprocal}\end{equation}
$q(t)$ is unique determined from $p(t)$ with (AR), and it is a monic polynomial of degree $3$, necessarily in ${\mathbf Z}[t]$. Since $p(t)$ is irreducible in ${\mathbf Z}[t]$, so is $q(t)$. We call $q(t)$ the {\it special trace polynomial} of a special polynomial $p(t)$ with (AR). Later, we will see that (AP) implies (AR) for special polynomials. 
\begin{lemma}
Assume the condition (AR) and define $x=\zeta _1 +\zeta _2$, $y=\zeta _3 +\zeta _4 $ and $z=\zeta _5+\zeta _6$. Then $q(t)=(t-x)(t-y)(t-z)$, $x$ is real, 
$\vert x \vert < 2$, $x \not= 0, \pm 1$ but $y$ and $z=\overline{y}$ are non-real. Moreover, the Galois group of $q(t)$, i.e., the Galois group of the field extension ${\mathbf Q}(x, y, z)/{\mathbf Q}$ 
is $S_3 = {\rm Aut}_{{\rm set}}\, (\{x, y, z\})$. 
\label{trace}
\end{lemma}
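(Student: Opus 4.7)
The plan is to first verify that $x$, $y$, $z$ are precisely the three roots of $q(t)$. Condition (AR), together with the identity $\zeta_1\zeta_2=1$ (already noted, since $\zeta_2=\overline{\zeta_1}$ and $|\zeta_1|=1$), pairs the six roots of $p$ into three reciprocal pairs $\{\zeta_1,\zeta_2\}$, $\{\zeta_3,\zeta_4\}$, $\{\zeta_5,\zeta_6\}$. Plugging $t=\zeta_i$ into the relation $p(t)=t^3q(t+1/t)$ and dividing by $\zeta_i^3\neq 0$ gives $q(\zeta_i+1/\zeta_i)=0$, so $x$, $y$, $z$ exhaust the three roots of the monic cubic $q(t)$, and the factorization $q(t)=(t-x)(t-y)(t-z)$ follows.

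Next I would read off the reality and magnitude constraints. From $\zeta_2=\overline{\zeta_1}$ we get $x=2\,{\rm Re}(\zeta_1)\in {\mathbf R}$, and $|\zeta_1|=1$ with $\zeta_1$ non-real forces $|x|<2$. To exclude $x\in\{0,\pm 1\}$, each choice would force $\zeta_1$ to satisfy $t^2+1=0$, $t^2-t+1=0$, or $t^2+t+1=0$ respectively (using $\zeta_1\zeta_2=1$), producing a proper factor of $p(t)$ in ${\mathbf Z}[t]$ and contradicting irreducibility. For $y$: by (AR), $\zeta_4=1/\zeta_3$, so $y=\zeta_3+1/\zeta_3$; were $y$ real, $\zeta_3$ would satisfy the real quadratic $t^2-yt+1=0$ and combined with $|\zeta_3|>1$ this would force $\zeta_3\in{\mathbf R}$, violating condition (A). Applying complex conjugation gives $z=\overline{y}$, which is then also non-real.

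Finally, for the Galois group of $q(t)$: since $q$ is irreducible of degree $3$ over ${\mathbf Q}$, its Galois group is a transitive subgroup of $S_3$, hence either $A_3\cong {\mathbf Z}/3{\mathbf Z}$ or the full $S_3$. My plan to rule out the cyclic case is a complex-conjugation argument: if the group were ${\mathbf Z}/3{\mathbf Z}$, the splitting field would coincide with the degree-$3$ extension ${\mathbf Q}(y)$ and be Galois over ${\mathbf Q}$. Complex conjugation would restrict to a field automorphism of this splitting field of order dividing $2$, but ${\mathbf Z}/3{\mathbf Z}$ has only the identity of such order, which would force $y=\overline{y}$, contradicting the previous paragraph. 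Hence the Galois group of $q(t)$ is $S_3={\rm Aut}_{{\rm set}}\,(\{x,y,z\})$.

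The main obstacle I anticipate is not conceptual but one of bookkeeping: the exclusions $x\notin\{0,\pm 1\}$ must be correctly tied back to the irreducibility of $p(t)$ via the cyclotomic quadratic factors they would force, and one must carefully track which identities come from (AR) versus those built into condition (A) itself. Everything else — the factorization of $q$, the reality of $x$, the non-reality of $y$ and $z$, and the Galois-group computation — follows cleanly from the pairing structure and a single conjugation argument.
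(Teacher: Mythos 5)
Your proof follows essentially the same approach as the paper's: the factorization of $q$ from the defining relation $p(t)=t^3q(t+\tfrac1t)$, the constraints on $x$ from $\zeta_1$ being a non-real unimodular number and from the irreducibility of $p$, the non-reality of $y,z$ from the reciprocal pairing, and $S_3$ by combining irreducibility (transitivity) with the existence of a non-real root (complex conjugation supplies a transposition) — your conjugation argument is an equally standard variant of the paper's degree count $[\mathbf{Q}(x,y,z):\mathbf{Q}]\geq[\mathbf{Q}(x,y,z):\mathbf{Q}(x)]\cdot[\mathbf{Q}(x):\mathbf{Q}]\geq 2\cdot 3=6$. One small step worth tightening: substituting $t=\zeta_i$ into $p(t)=t^3q(t+\tfrac1t)$ shows only that $x,y,z$ lie \emph{among} the roots of $q$; to conclude the factorization $q(t)=(t-x)(t-y)(t-z)$ before you have shown $x,y,z$ to be distinct, one should instead expand
$p(t)=(t^2-xt+1)(t^2-yt+1)(t^2-zt+1)=t^3\bigl(t+\tfrac1t-x\bigr)\bigl(t+\tfrac1t-y\bigr)\bigl(t+\tfrac1t-z\bigr)$
and read off $q$ by uniqueness (this is what the paper's phrase ``by the definition of $q(t)$'' encodes), or else move your reality/non-reality observations earlier so that distinctness of $x,y,z$ is available.
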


\begin{proof} By the definition of $q(t)$, we have $q(t)=(t-x)(t-y)(t-z)$. 
Since $\vert\zeta_1\vert = 1$ and $\zeta_2 = \overline{\zeta_1}$, $\zeta_1 \not= \pm 1$, it follows that $x$ is a real number with $-2 < x < 2$. 
If $x = 0, \pm 1$, then $\zeta$ would be a root of unity of degree $\le 2$, a contradiction to the irreducibility of $p(t)$ in ${\mathbf Z}[t]$. By definition $z = \overline{y}$. Since $q(t)$ is irreducible in ${\mathbf Z}[t]$, it follows that $y \not= z = \overline{y}$, whence they are non-real. 
Since $q(t) \in {\mathbf Z}[t]$ is an irreducible cubic polynomial with non-real root, it follows that 
$$[{\mathbf Q}(x, y, z) : {\mathbf Q}] = [{\mathbf Q}(x,y,z) : {\mathbf Q}(x)][{\mathbf Q}(x) : {\mathbf Q}] \ge 2 \times 3 = 6 = \vert S_3 \vert\,\, ,$$
whence the Galois group is the whole $S_3$.
\end{proof}

\begin{theorem}
Assume (AR). Then the Galois group $G$ conjugates in the symmetric group $S_6 
= {\rm Aut}_{\rm set}({\mathcal S})$ to one of the following $4$ groups (Here and hereafter we denote $\zeta_i$ by $i$ in the description):

$G_{48} = \langle (12)\, ,\, (13)(24)\, ,\, (135)(246) \rangle$. This group has $48$ elements.

$G_{24} = \langle (12)\, ,\, (135)(246) \rangle$. It is a subgroup of $G_{48}$, has $24$ elements and is isomorphic to $S_4$.

$H_{24}$ which is another subgroup of $G_{48}$ having $24$ elements and isomorphic to $S_4$ but not conjugates to $G_{24}$ in $S_6$. For a description of this group please see \cite{hagedorn}. For our purpose in this paper we need to know only that $H_{24}$ contains the group $G_{12}$ below.

$G_{12} = \langle (135)(246)\, , \, (13)(24)\, , \, (12)(34)(56) \rangle$. This is a subgroup of $H_{24}$ and has $12$ elements.

$H_6 = \langle (135)(246)\, , \, (12)(36)(45) \rangle$. This group is 
isomorphic to the symmetric group $S_3$.

\label{TheoremGaloisGroupNonProjectiveTori}\end{theorem}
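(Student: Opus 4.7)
The plan is to exploit the reciprocal structure imposed by assumption (AR). Combined with (A)---which forces $\zeta_2 = \overline{\zeta_1}$, $|\zeta_1| = 1$, and hence $\zeta_1\zeta_2 = 1$---condition (AR) partitions the six roots into three ``reciprocal pairs'' $\{\zeta_{2i-1},\zeta_{2i}\}$ ($i = 1, 2, 3$), each with product $1$. Since the relation $\zeta\zeta' = 1$ is Galois-invariant, every $\sigma \in G$ permutes these three pairs. Encoding $\sigma$ by the induced permutation of pair-indices together with the three ``within-pair flip'' bits yields a canonical embedding $G \hookrightarrow (\mathbf{Z}/2)^3 \rtimes S_3 = G_{48}$, realising $G$ as a subgroup of the hyperoctahedral group acting on $\{1,\dots,6\}$ and respecting the pair partition.

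Set $N := G \cap (\mathbf{Z}/2)^3$, the elements of $G$ preserving each pair setwise, equivalently those fixing $x = \zeta_1 + \zeta_2$, $y = \zeta_3 + \zeta_4$, $z = \zeta_5 + \zeta_6$. Then $N = \mathrm{Gal}(K/\mathbf{Q}(x,y,z))$, and by Lemma \ref{trace} the quotient $G/N \cong \mathrm{Gal}(\mathbf{Q}(x,y,z)/\mathbf{Q})$ is the full $S_3$. Because conjugation by any $S_3$-lift in $G_{48}$ acts on $(\mathbf{Z}/2)^3$ via the tautological coordinate permutation, $N$ must be $S_3$-stable; a direct check shows the $S_3$-stable subgroups of $(\mathbf{Z}/2)^3$ are exactly $\{0\}$, $\Delta := \langle(1,1,1)\rangle$, the sum-zero subspace $V := \{(a,b,c) \mid a+b+c = 0\}$, and the whole $(\mathbf{Z}/2)^3$. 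This forces $|G| \in \{6, 12, 24, 48\}$.

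For each $N$, the $G_{48}$-conjugacy classes of subgroups $G \subset G_{48}$ with $G \cap (\mathbf{Z}/2)^3 = N$ and $G/N = S_3$ are in bijection with the sections of $(G_{48}/N) \to S_3$, equivalently with $H^1(S_3, (\mathbf{Z}/2)^3 / N)$; this cohomology group equals $\mathbf{Z}/2$ precisely when $(\mathbf{Z}/2)^3/N$ contains a trivial $S_3$-summand, i.e.\ in the cases $N = 0$ and $N = V$, and vanishes otherwise. Adding the transitivity constraint on the action on six roots (forced by the irreducibility of $p(t)$) eliminates the ``untwisted'' section in the case $N = 0$, which has two orbits of size $3$; the ``twisted'' section in that case produces $H_6$. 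The remaining survivors match the listed groups $G_{12}$, $G_{24}$, $H_{24}$, and $G_{48}$, for a total of five $S_6$-conjugacy classes. The main obstacle is proving that $G_{24}$ and $H_{24}$ are not $S_6$-conjugate; this reduces, via $N_{S_6}(G_{48}) = G_{48}$ (since $G_{48}$ is the full $S_6$-stabilizer of the pair partition), to showing the two sections of $(G_{48}/V) \to S_3$ are not $G_{48}$-conjugate, which is the statement that $H^1(S_3, (\mathbf{Z}/2)^3/V) = H^1(S_3, \mathbf{Z}/2) \neq 0$. The explicit generator descriptions in the statement are then read off by verifying that each listed group realises the appropriate pair $(N,\text{section})$.
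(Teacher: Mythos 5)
Your approach is genuinely different from the paper's. The paper invokes Hagedorn's pre-existing classification of the eight transitive subgroups of $S_6$ contained in the wreath product $G_{48}$, then eliminates $\Gamma_{12}$, $\Gamma_{24}$ by the sign of the discriminant $\Delta = \prod_{i<j}(\zeta_i - \zeta_j)^2$ (negative because all six roots are non-real), and finally eliminates $C_6$ by the fact that $G$ surjects onto $\mathrm{Gal}(\mathbf{Q}(x,y,z)/\mathbf{Q}) = S_3$. You instead classify directly from scratch, taking $N := G \cap (\mathbf{Z}/2)^3$ and observing that $G/N \cong S_3$ forces $N$ to be an $S_3$-stable subgroup of $(\mathbf{Z}/2)^3$ (one of $\{0\}$, $\Delta$, $V$, or everything), then counting sections via $H^1(S_3, (\mathbf{Z}/2)^3/N)$. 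This is cleaner and more self-contained; in particular, your systematic use of the surjection $G \twoheadrightarrow S_3$ is stronger than the paper's ad hoc use of it (which only eliminates $C_6$). Indeed, your argument forces the image of $G$ in $S_3$ to be all of $S_3$, and this already eliminates $\Gamma_{12} \cong A_4$ \emph{and} the group $\langle (12), (135)(246)\rangle = (\mathbf{Z}/2)^3 \rtimes \mathbf{Z}/3 \cong \mathbf{Z}/2 \times A_4$ (whose image in $S_3$ is only $\mathbf{Z}/3$, and which, incidentally, has elements of order $6$ and is not isomorphic to $S_4$ as the paper asserts for $G_{24}$).

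However, there is a genuine gap: you never use the parity constraint. Condition (A) forces all six roots to be non-real, so $\Delta < 0$, so $\sqrt{\Delta} \notin \mathbf{Q}$, so $G \not\subset A_6$. In your classification the \emph{untwisted} section in the case $N = V$ --- the subgroup generated by the standard $S_3$-lift $\langle(13)(24),(135)(246)\rangle$ and $V = \langle(12)(34),(34)(56)\rangle$ --- consists entirely of even permutations, so it lies inside $A_6$ and must be discarded. You keep it. (This is precisely $\Gamma_{24}$ in Hagedorn's list, which the paper eliminates by the discriminant argument.) Consequently the identification in your last sentence does not go through: the two sections in the case $N = V$ are $\Gamma_{24}$ and $H_{24}$, not $G_{24}$ and $H_{24}$; the paper's explicit generators for $G_{24}$ yield a group with $N = (\mathbf{Z}/2)^3$ and quotient $\mathbf{Z}/3$, which your $S_3$-surjection criterion rightly excludes but your proposed matching assigns to the $N = V$ slot. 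To close the gap, add the parity observation to kill the untwisted $N = V$ section, which then leaves four groups: $H_6$, $G_{12}$, $H_{24}$ (the twisted $V \rtimes S_3 \cong S_4$), and $G_{48}$.
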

\begin{proof}

From the assumption (AR), we see that if $\sigma \in G$ then it permutes the sets $\{\zeta _1,\zeta _2\}$, $\{\zeta _3,\zeta _4\}$ and $\{\zeta _5,\zeta _6\}$. Therefore $G$ conjugates to an imprimitive transitive subgroup of the group 
$G_{48}$, the latter being the semi-direct product $S_2^{\oplus 3}\rtimes S_3$, called the wreath product of $S_3$ and $S_2$. From the classification of transitive subgroups of $S_6$, we find that $G$ conjugates with one of the following $8$ groups : $G_{48}$, $G_{24}$, $\Gamma _{12}$, $G_{12}$, $C_6$ (the cyclic group of order $6$), $S_3$, $\Gamma _{24}$, $H_{24}$. Here we follow the notations in the paper Hagedorn \cite{hagedorn}.

It can be checked that the discriminant 
\begin{eqnarray*}
\Delta =\prod _{i<j}(\zeta _i-\zeta _j)^2
\end{eqnarray*}  
is negative. Therefore by Theorem 3 in \cite{hagedorn}, $G$ can not conjugate to the groups $\Gamma _{12}$ and $\Gamma _{24}$. It remains to check that $G$ can not conjugate to the cyclic group $C_6$. This is easy to see since if it was so, then $G$ has $6$ elements and hence it must be the Galois group $S_3$ of the splitting field over $\mathbf{Q}$ of the special trace polynomial $q(t)$, hence can not be $C_6$.
\end{proof}

We next compute the orbits under the actions of the groups $S_3$, $G_{12}$ and $G_{24}$ of the set 
$$\wedge^2 \mathcal{S} := \{\zeta _i\zeta _j,~1\leq i\not= j\leq 6\}\,\, .$$Here the numbers appear in $\wedge^2 \mathcal{S}$ with multiplicity, for example the number $1$ appears $3$ times in $\wedge^2 \mathcal{S}$ as numbers $\zeta _1\zeta _2$, $\zeta _3\zeta _4$ and $\zeta _5\zeta _6$; 
$\zeta _1\zeta _2 = \zeta _3\zeta _4 = \zeta _5\zeta _6 = 1$.

\begin{lemma}
1) $\wedge^2 \mathcal{S}$ decomposes under the action of the group 
$$H_6 = \langle(135)(246),(12)(36)(45) \rangle \simeq S_3$$ into the following four 
$S_3$-orbits:
\begin{eqnarray*}
Z_1&=&\{\zeta _1\zeta _4,\zeta _2\zeta _5,\zeta _3\zeta _6\},\\
Z_2&=&\{\zeta _1\zeta _6,\zeta _2\zeta _3,\zeta _4\zeta _5\},\\
Z_3&=&\{\zeta _1\zeta _2,\zeta _3\zeta _4,\zeta _5\zeta _6\},\\
Z_4&=&\{\zeta _1\zeta _3,\zeta _1\zeta _5,\zeta _2\zeta _4,\zeta _2\zeta _6,\zeta _3\zeta _5,\zeta _4\zeta _6\}.
\end{eqnarray*}

2) $\wedge^2 \mathcal{S}$ decomposes under the action of the group 
$$G_{12}= \langle (135)(246), (13)(24), (12)(34)(56) \rangle$$ into the following three 
$G_{12}$-orbits: 
\begin{eqnarray*}
Z_1&=&\{\zeta _3\zeta _6,\zeta _5\zeta _2,\zeta _1\zeta _6,\zeta _4\zeta _5,\zeta _1\zeta _4,\zeta _2\zeta _3\},\\
Z_2&=&\{\zeta _1\zeta _3,\zeta _3\zeta _5,\zeta _2\zeta _4,\zeta _5\zeta _1,\zeta _4\zeta _6,\zeta _6\zeta _1\},\\
Z_3&=&\{\zeta _1\zeta _2,\zeta _3\zeta _4,\zeta _5\zeta _6\}.
\end{eqnarray*}

3) $\mathcal{S}$ decomposes under the action of the group 
$$G_{24}= \langle (12), (135)(246) \rangle$$ into the following two $G_{24}$-orbits: 
\begin{eqnarray*}
Z_1&=&\{\zeta _3\zeta _6,\zeta _5\zeta _2,\zeta _1\zeta _6,\zeta _4\zeta _5,\zeta _1\zeta _4,\zeta _2\zeta _3,\zeta _1\zeta _3,\zeta _3\zeta _5,\zeta _2\zeta _4,\zeta _5\zeta _1,\zeta _4\zeta _6, \zeta _6\zeta _1\},\\
Z_2&=&\{\zeta _1\zeta _2,\zeta _3\zeta _4,\zeta _5\zeta _6\}.
\end{eqnarray*}
\label{LemmaOrbitGroupActions}\end{lemma}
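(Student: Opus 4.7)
The statement is a purely combinatorial claim: each of the subgroups $H_6 \simeq S_3$, $G_{12}$, $G_{24}$ of $S_6$ acts on the $15$-element set $\wedge^2\mathcal{S} = \{\zeta_i\zeta_j : 1 \le i < j \le 6\}$ via $\sigma \cdot (\zeta_i\zeta_j) = \zeta_{\sigma(i)}\zeta_{\sigma(j)}$, and the lemma describes the resulting orbit partitions. My plan is a direct verification using the given generators of each group. A sanity check of counts is immediate: $3+3+3+6 = 15$ for part 1, $6+6+3 = 15$ for part 2, and $12+3 = 15$ for part 3, so for each group it suffices to show that the listed subsets $Z_i$ are stable under every generator; then the counts force the list to be the full orbit decomposition.

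For part 1, I take the generators $\sigma = (135)(246)$ of order $3$ and $\tau = (12)(36)(45)$ of order $2$, apply them to the representatives $\zeta_1\zeta_4$, $\zeta_1\zeta_6$, $\zeta_1\zeta_2$, $\zeta_1\zeta_3$ of $Z_1,Z_2,Z_3,Z_4$, and iterate. For instance, $\sigma(\zeta_1\zeta_4) = \zeta_3\zeta_6$ and $\tau(\zeta_1\zeta_4) = \zeta_2\zeta_5$, which together with further iteration generate exactly $Z_1$. The only nontrivial orbit-size check is for $Z_4$, which has $6$ elements; since $|H_6|=6$, orbit-stabilizer shows the orbit divides $6$, and one verifies by direct computation that the orbit of $\zeta_1\zeta_3$ has size exactly $6$.

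Parts 2 and 3 then follow by observing the effect of the additional generators. The extra generator $(13)(24)$ of $G_{12}$ over $H_6$ sends $\zeta_1\zeta_4 \in Z_1^{(1)}$ to $\zeta_3\zeta_2 \in Z_2^{(1)}$, so it merges $Z_1^{(1)}$ and $Z_2^{(1)}$ of part 1 into the size-$6$ orbit $Z_1^{(2)}$ of part 2; a parallel check shows $Z_4^{(1)}$ remains internally closed and becomes $Z_2^{(2)}$, while $Z_3^{(1)} = Z_3^{(2)}$ is clearly preserved. Finally, the extra generator $(12)$ of $G_{24}$ over $G_{12}$ sends, say, $\zeta_1\zeta_4 \in Z_1^{(2)}$ to $\zeta_2\zeta_4 \in Z_2^{(2)}$, so it fuses the two $6$-element $G_{12}$-orbits into the size-$12$ orbit $Z_1^{(3)}$, while again fixing $Z_3$. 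The only minor bookkeeping point is that under the assumption (AR) used earlier, the three elements $\zeta_1\zeta_2$, $\zeta_3\zeta_4$, $\zeta_5\zeta_6$ all equal $1$; but as index pairs they remain distinct, and it is only the action on indexed pairs that is relevant here, so this causes no difficulty. The entire argument is a short set of explicit permutation calculations with no real obstacle.
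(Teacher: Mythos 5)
Your approach — direct verification that each listed set is stable under the generators, plus a count — is the same as the paper's (the paper just says ``direct calculation''), and parts 1 and 2 of your proposal are correct. The check that $H_6 \subset G_{12}$ implicitly underlying part 2 does hold: for example $(12)(36)(45) = \bigl[(135)(246)\bigr]^2\cdot(13)(24)\cdot(12)(34)(56) \in G_{12}$, so the $G_{12}$-orbits are indeed unions of the $H_6$-orbits, and adding the generator $(13)(24)$ fuses them exactly as you describe.

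However, your argument for part 3 has a genuine gap: it rests on the assumption that $G_{12} \subseteq G_{24}$, so that the $G_{24}$-orbits are fusions of the $G_{12}$-orbits, and this containment is false. Both $G_{12}$ and $G_{24}$ sit inside $G_{48}$, which maps onto $S_3$ by its action on the blocks $\{1,2\},\{3,4\},\{5,6\}$. The image of $G_{12}$ in this $S_3$ is all of $S_3$ (the generator $(13)(24)$ transposes two blocks), whereas the image of $G_{24}=\langle(12),(135)(246)\rangle$ is only the cyclic group $\mathbf{Z}/3$ (the generator $(12)$ is trivial on blocks and $(135)(246)$ is a $3$-cycle). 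Hence $(13)(24)\in G_{12}\setminus G_{24}$ and $G_{12}\not\subseteq G_{24}$; in fact $H_6\not\subseteq G_{24}$ either, for the same reason. So one cannot simply ``add the generator $(12)$ to $G_{12}$.'' The conclusion of part 3 is still correct, but it needs to be established directly — e.g., note that $G_{24}\subset G_{48}$ stabilizes $Z_3=\{\zeta_1\zeta_2,\zeta_3\zeta_4,\zeta_5\zeta_6\}$ and acts transitively on it via $(135)(246)$, and then check by hand (or by building up from the orbit of $\zeta_1\zeta_3$ under $(12)$ and $(135)(246)$, which already gives $12$ distinct pairs) that the remaining $12$ unordered pairs form a single $G_{24}$-orbit.
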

\begin{proof}  
This follows from a direct calculation. 
\end{proof}

\begin{theorem}
Assume the condition (AP). Then the condition (AR) is satisfied and the Galois group $G$ is exactly either  
$$G_{12}= \langle (135)(246), (13)(24), (12)(34)(56) \rangle$$ 
or 
$$H_6 = \langle (135)(246),(12)(36)(45) \rangle \simeq S_3$$
as the subgroup of $S_6 = {\rm Aut}_{\rm set}({\mathcal S})$ (without taking conjugate in $S_6$).   
\label{TheoremGaloisGroupProjectiveTori}\end{theorem}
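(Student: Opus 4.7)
The plan is to derive (AR) from (AP) first, then locate $G$ as a specific subgroup of $S_6$ by combining preservation of two combinatorial structures on $\mathcal{S}$ with the presence of complex conjugation.

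First I show that under (A) and (AP), the only unordered triples $\{i,j,k\} \subset \{1,\ldots,6\}$ with $\zeta_i\zeta_j\zeta_k = 1$ are $\{1,3,5\}$ and its complex conjugate $\{2,4,6\}$. Any triple containing both members of a complex-conjugate pair $\{1,2\}$, $\{3,6\}$, or $\{4,5\}$ contributes the real factor $1$, $\vert\zeta_3\vert^2$, or $\vert\zeta_5\vert^2$ respectively, forcing the remaining root to be a specific real number, contradicting the non-reality in (A). Among the $8$ triples picking one root from each conjugate pair, (AP) and its conjugate already produce $\{1,3,5\}$ and $\{2,4,6\}$; any further such identity, divided against (AP) or its conjugate, forces two distinct $\zeta_i$'s to coincide, contradicting irreducibility of $p(t)$. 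Since Galois maps preserve products, $G$ permutes these two triples, hence preserves the partition $P_1:=\{\{1,3,5\},\{2,4,6\}\}$.

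Next I deduce (AR). Irreducibility of $p(t)$ makes $G$ act transitively on $\mathcal{S}$, so there is $\sigma\in G$ with $\sigma(\zeta_1)=\zeta_3$; since $G$ preserves $P_1$ and $\zeta_1,\zeta_3$ lie in the same part, $\sigma$ fixes each part, giving $\sigma(\zeta_2)\in\{\zeta_2,\zeta_4,\zeta_6\}$. Applying $\sigma$ to the always-valid relation $\zeta_1\zeta_2=1$ yields $\sigma(\zeta_2)=1/\zeta_3$, whose absolute value $1/\vert\zeta_3\vert$ singles out $\sigma(\zeta_2)=\zeta_4$, whence $\zeta_3\zeta_4=1$. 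The analogous choice $\sigma(\zeta_1)=\zeta_5$ gives $\zeta_5\zeta_6=1$, so (AR) holds. Combined with $\zeta_3\zeta_5=1/\zeta_1=\zeta_2\neq 1$ (from (AP)) and $\zeta_4\zeta_6=\zeta_1\neq 1$, an absolute-value check shows the only pairs $\{i,j\}$ with $\zeta_i\zeta_j=1$ are $\{1,2\},\{3,4\},\{5,6\}$, so $G$ also preserves $P_2:=\{\{1,2\},\{3,4\},\{5,6\}\}$. A direct enumeration shows that any element of $S_6$ preserving both $P_1$ and $P_2$ must permute the three pairs of $P_2$ and either leave each part of $P_1$ intact (no internal swap) or exchange them (simultaneous swap in every pair), yielding $2\cdot 6=12$ elements; since the generators $(135)(246)$, $(13)(24)$, $(12)(34)(56)$ of $G_{12}$ visibly lie in this intersection, it coincides with $G_{12}$, and so $G\subset G_{12}$.

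Finally, transitivity forces $6\mid\vert G\vert$, so $\vert G\vert\in\{6,12\}$, and $G$ always contains the complex conjugation $c=(12)(36)(45)$. If $\vert G\vert=12$ then $G=G_{12}$. Otherwise $\vert G\vert=6$, and among the three order-$6$ subgroups of $G_{12}\cong S_3\times C_2$ --- namely $S_3\times\{e\}$, $C_3\times C_2$, and a ``twisted'' $S_3$ --- a direct computation, using the identity $c=a^2bd$ in the generators $a=(135)(246)$, $b=(13)(24)$, $d=(12)(34)(56)$ of $G_{12}$, shows that $c$ lies only in the twisted $S_3$, which is exactly $H_6$. Hence $G=H_6$. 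The main obstacle is the triple-count in the first step; once $P_1$-preservation is secured, the rest reduces to transitivity, absolute-value bookkeeping, and a finite subgroup enumeration.
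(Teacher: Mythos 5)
Your proof is correct, and it takes a genuinely different route from the paper's. The paper first derives (AR) from transitivity and (AP), then builds the exact sequence
\[
1 \longrightarrow N \longrightarrow G \longrightarrow {\rm Gal}(q(t)/{\mathbf Q}) \cong S_3 \longrightarrow 1
\]
using the special trace polynomial $q(t)$, proves a parity lemma (Lemma~\ref{Parity}) to bound $\vert N \vert \le 2$, and then pins down generators case by case. You instead prove a stronger triple-count lemma up front (using the complex-conjugate pairs $\{1,2\},\{3,6\},\{4,5\}$ rather than the reciprocal pairs, which lets the argument run before (AR) is known), use it both to derive (AR) and to show $G$ preserves the two partitions $P_1 = \{\{1,3,5\},\{2,4,6\}\}$ and $P_2 = \{\{1,2\},\{3,4\},\{5,6\}\}$, identify the joint stabilizer of $P_1, P_2$ in $S_6$ as exactly $G_{12} \cong S_3 \times C_2$, and finish by enumerating the three index-$2$ subgroups of $G_{12}$ and checking which contains the complex conjugation $c = (12)(36)(45)$. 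The decomposition $c = a^2 b d$ you cite is correct ($a^2 b = (35)(46)$, and $(35)(46)\cdot(12)(34)(56) = (12)(36)(45)$), and $c$ indeed lies in the diagonal ``twisted'' $S_3 = H_6$ but in neither $S_3 \times \{e\}$ (wrong $C_2$-component) nor $C_3 \times C_2$ (wrong $S_3$-component). The paper's argument buys slightly more explicit information (it identifies the lifts of the $S_3$-generators directly, which is reused elsewhere), while yours is cleaner as a standalone subgroup enumeration and makes the containment $G \le G_{12}$ conceptually transparent as a ``stabilizer of two invariant partitions.'' Both ultimately hinge on the same three inputs: transitivity of $G$, the multiplicative relations forced by (AP), and the fact that $G$ contains complex conjugation.
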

\begin{proof} Since $G$ acts transitively on ${\mathcal S}$, there is $\sigma \in G$ such that $\sigma (\zeta _1)=\zeta _3$. Then $\sigma (\zeta _2)=\zeta _4$ or $\zeta _5$ by $\zeta_1\zeta_2 =1$. First we observe that $\sigma (\zeta _2)$ can not be $\zeta _5$. Otherwise,  
\begin{eqnarray*}
\zeta _3 \zeta _5 =\sigma (\zeta _1 )\sigma (\zeta _2)=1=\zeta _1\zeta _3\zeta _5 ,
\end{eqnarray*}
which would imply that $\zeta _1 =1$, a contradiction. Therefore $\sigma (\zeta _2)=\zeta _4$, and moreover $\zeta _3\zeta _4=1$ again by $\zeta_1\zeta_2 =1$. 
Since $p(0)=1$ we have $\zeta _1\zeta _2\zeta _3\zeta _4\zeta _5\zeta _6=1$, which combined with $\zeta _1\zeta _2=1$ and $\zeta _3\zeta _4=1$ implies also that $\zeta _5\zeta _6=1$.  Hence (AR) is satisfied. Now we can speak of the 
special trace polynomial $q(t)$ of $p(t)$. 

Since the splitting field ${\mathbf Q}(x, y, z)$ of $q(t)$ under the notation of Lemma \ref{trace}, is a subfield of $K$, we have a surjective group 
homomorphism
$$q : G \to {\rm Aut}_{{\rm set}}(\{\{1,2\}\, , \, \{3, 4\}\, , \{5, 6\}\,\}) \simeq S_3\,\, .$$
Here the target group is naturally regarded as the Galois group of the trace polynomial $q(t)$ under the identification $x = \{1, 2\}$, $y = \{3, 4\}$, $z = \{5, 6\}$, and it is generated by $a_1 = (xyz)$ and $a_2 = (yz)$. 

\begin{lemma}
If $\zeta_i\zeta_j\zeta_k =1$($i \not= j \not= k \not= i$), then $\{i,j,k\}$ 
is either $\{1,3,5\}$ or $\{2, 4, 6\}$.
\label{Parity}
\end{lemma} 
\begin{proof} By (AP), we have $\zeta_1\zeta_3\zeta_5 = 1$ and $\zeta_2\zeta_4\zeta_6 = 1$. If some two of $i, j, k$ would be in $\{\{1,2\}, \{3, 4\}\, \{5, 6\}\}$, then 
$\zeta_i\zeta_j\zeta_k$ is non-real by (AR). So, we may assume that $i \in \{1,2\}$, $j \in \{3, 4\}$, $k \in \{5, 6\}$. If $i$, $j$, $k$ were not the same parity, then there would be unique $l \in \{i, j, k\}$, say $2$, 
such that the parity of $l$ is different from other two. For instance if $l =2$, Then, $\zeta_2\zeta_3\zeta_5 = 1$. Combining this with $\zeta_1\zeta_3\zeta_5 = 1$, we would have $\zeta_1 = \zeta_2$, a contradiction. 
\end{proof}

Now consider $N := {\rm Ker}\, q$. Let $\sigma \in N$. Then $\sigma(\zeta_1) 
\in \{\zeta_1, \zeta_2\}$, $\sigma(\zeta_3) 
\in \{\zeta_3, \zeta_4\}$ and $\sigma(\zeta_5) 
\in \{\zeta_5, \zeta_6\}$. Thus by Lemma \ref{Parity} and by
$$\sigma(\zeta_1)\sigma(\zeta_3)\sigma(\zeta_5) = 
\sigma(\zeta_1\zeta_3\zeta_5) 
= \sigma(1) = 1$$, 
it follows that $\sigma = id$ or $\sigma = (12)(34)(56)$. Hence $\vert N \vert \le 2$ and if $\vert N \vert = 2$, then $N = \langle (12)(34)(56)\rangle$ and $\vert G \vert = 12$.

If $N = \{id\}$, then $G \simeq S_3$, and the unique lifts $\sigma_1$, $\sigma_2$ of $a_1$ and $a_2$ generate $G$. We have 
$\sigma_1(\zeta_1), \sigma_1(\zeta_2)
\in \{\zeta_3, \zeta_4\}$, $\sigma_1(\zeta_3), \sigma_1(\zeta_4) 
\in \{\zeta_5, \zeta_6\}$ and $\sigma_1(\zeta_5), \sigma_1(\zeta_6)  
\in \{\zeta_1, \zeta_2\}$. By Lemma \ref{Parity}, $\zeta_1\zeta_3\zeta_5 = 1$ 
and $\zeta_2\zeta_4\zeta_6 = 1$, it follows that $\sigma_1 = (135)(246)$ or $\sigma_1 = (145236)$. Since $\sigma_1$ is of order $3$, it follows that $\sigma_1 = (135)(246)$. Since the complex conjugation map $\tau = (12)(36)(54)$ in $G$ and it is one of the lift of $a_2$. Thus $\sigma_2 = \tau$ and we are done. 

Consider the case $N = \langle (12)(34)(56) \rangle$. Set $\sigma_0 := (12)(34)(56)$. There are two choices of the lift of $a_1$ and $a_2$ respectively. The are transformed by $\sigma_0$. As in the case above, the complex conjugation map $\tau = (12)(36)(54)$ in $G$ and it is one of the lift of $a_2$. the possible lift of $a_1$ are $(135)(246)$ and $(145236)$ and they are transformed by $\sigma_0$. Hence we choose $(135)(246)$ as one of the lift of $a_1$.  
Hence the group $G$ is generated by $(12)(34)(56)$, $(12)(36)(54)$, $(135)(246)$, as claimed. This completes the proof.
\end{proof}

\subsection{Projective examples}  

\begin{theorem}
There is a projective complex $3$-torus of Picard number $9$ having an automorphism $f \in {\rm Aut}\, X$ 
such that $\lambda_2 (f) = \lambda_1(f)>1$ but $X$ has no $f$-equivariant holomorphic fibration. Moreover if $p(t)$ is the characteristic polynomial of $f^*:H^1(X,\mathbb{Z})\rightarrow H^1(X,\mathbb{Z})$ and $G$ is the Galois group of the splitting field of $p(t)$ over $\mathbb{Q}$, then both cases $G=S_3$ and $G=G_{12}$ exist.
\label{TheoremProjective3Torus}
\end{theorem}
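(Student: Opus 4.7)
The plan is to construct explicit examples inside $X = E^3$ for a CM elliptic curve $E$. Let $E = \mathbb{C}/\mathbb{Z}[\omega]$ with $\omega = e^{2\pi i/3}$, so that $E$ has complex multiplication by the Eisenstein integers $\mathcal{O}_F = \mathbb{Z}[\omega]$, and set $F := \mathbb{Q}(\omega)$ and $X := E^3$. Then $X$ is a projective complex $3$-torus with maximal Picard number $\rho(X) = h^{1,1}(X) = 9$, since the classical description $\mathrm{NS}(X) \otimes \mathbb{Q} \simeq \{A \in M_3(F) : A^{*} = A\}$ gives $\mathbb{Q}$-dimension $3 + 2\binom{3}{2} = 9$. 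Each matrix $A \in {\rm GL}_3(\mathcal{O}_F)$ induces an automorphism $f_A \in {\rm Aut}_{\rm group}(X)$, and via the embedding $\mathcal{O}_F \hookrightarrow \mathbb{C}$ the action $(f_A)_*$ on $H^0(X, \Omega_X^1)^* \simeq \mathbb{C}^3$ is simply multiplication by $A$. Hence, writing $p_F(t) \in \mathcal{O}_F[t]$ for the characteristic polynomial of $A$ over $F$, the characteristic polynomial of $(f_A)_*$ on $H^1(X, \mathbb{Z}) \simeq \mathbb{Z}^6$ is $p(t) = p_F(t) \cdot \overline{p_F(t)} \in \mathbb{Z}[t]$, where the bar denotes the non-trivial Galois automorphism of $F/\mathbb{Q}$ (equivalently, complex conjugation under our embedding).

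I will then choose $A \in {\rm SL}_3(\mathcal{O}_F)$, so that $\alpha \beta \gamma = \det A = 1$, i.e., condition (AP) holds, and additionally require that $p_F(t)$ is irreducible over $F$ and $p_F \ne \overline{p_F}$; by unique factorization in $F[t]$ this forces $p(t)$ to be irreducible over $\mathbb{Z}$. I further arrange that the eigenvalues $\alpha, \beta, \gamma$ of $A$ satisfy $|\alpha| > 1 = |\beta| > |\gamma| = 1/|\alpha|$ with all six roots of $p(t)$ non-real. Then $p(t)$ is a special polynomial satisfying (AP) in the sense of Remark \ref{Special} 3); Lemma \ref{basic} yields $\lambda_1(f_A) = |\alpha|^2 = \lambda_2(f_A) > 1$, and Theorem \ref{DoubleFibration} 2) yields the absence of an $f_A$-equivariant holomorphic fibration. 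Projectivity and Picard number $9$ are automatic from $X = E^3$.

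To achieve both Galois groups, I use Theorem \ref{TheoremGaloisGroupProjectiveTori}, which tells us that under (AP) the group $G$ is either $H_6$ or $G_{12}$, distinguished by $[K : \mathbb{Q}] = 6$ versus $12$ where $K = \mathbb{Q}(\alpha, \overline{\alpha})$. In the CM set-up, $p(t)$ being irreducible of degree $6$ forces $F \subset \mathbb{Q}(\alpha) = F(\alpha)$, and the case $G = H_6$ holds exactly when $F(\alpha)$ is Galois over $\mathbb{Q}$ -- via the Galois theory of cubic extensions, this is equivalent to $p_F(t)$ having cyclic $\mathbb{Z}/3$-Galois group over $F$ together with its splitting field being closed under complex conjugation; the case $G = G_{12}$ is the generic alternative. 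A practical recipe: start from a degree-$3$ special trace polynomial $q(s) \in \mathbb{Z}[s]$ (one real root in $(-2,2)$, two non-real complex conjugate roots), form the reciprocal degree-$6$ polynomial $p(t) = t^3 q(t + 1/t)$, factor $p(t) = p_F(t) \overline{p_F(t)}$ over $F$, and take $A$ to be the companion matrix of $p_F(t)$, which lies in ${\rm SL}_3(\mathcal{O}_F)$ because $p_F(0) = -1$ from $\alpha\beta\gamma = 1$. Different choices of $q(s)$ will realize the two Galois-group cases.

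The hardest step will be producing small-coefficient explicit $q(s)$ (equivalently $p_F$) simultaneously satisfying irreducibility of $p(t)$ over $\mathbb{Z}$, condition (A) on root moduli, and the prescribed Galois group. This is a finite computation: irreducibility and (A) can be verified numerically, while the Galois-group determination reduces, via the discriminant of $p_F$ over $F$, to the Hagedorn criteria from \cite{hagedorn} that we already invoked in the proof of Theorem \ref{TheoremGaloisGroupNonProjectiveTori}.
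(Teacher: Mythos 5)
Your approach is genuinely different from the paper's. The paper constructs the torus via the ``standard construction'' of Proposition~\ref{Standard} from an explicit degree-$6$ polynomial $p(t)$ (Example~1: $p(t)=t^6+3t^5+5t^4+5t^3+5t^2+3t+1$ coming from $h(x)=x^3-x-1$; Example~2: $p(t)=t^6-5t^5+13t^4-11t^3+13t^2-5t+1$), and then proves projectivity and Picard number $9$ \emph{a posteriori} by a Galois descent argument identifying $N\otimes\mathbf{R}$ with $H^{1,1}(X,\mathbf{R})$ inside $H^2$. You instead propose to hardwire projectivity and $\rho=9$ into the geometry by taking $X=E^3$ for a CM elliptic curve $E$ with $\mathcal{O}_F=\mathbf{Z}[\omega]$, $F=\mathbf{Q}(\omega)$, and $f_A$ with $A\in\mathrm{SL}_3(\mathcal{O}_F)$. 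That part of the plan is sound: $\rho(E^3)=9$ automatically, $(f_A)_*|H_1(X,\mathbf{Z})$ has characteristic polynomial $p_F\cdot\overline{p_F}$, and the irreducibility argument over $\mathbf{Z}$ from $p_F\neq\overline{p_F}$ irreducible over $F$ is correct.

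The gap is in the ``practical recipe''. You write: start from any special trace polynomial $q(s)$, set $p(t)=t^3q(t+1/t)$, and \emph{factor} $p=p_F\overline{p_F}$ over $F$. But for a special polynomial $p$ with Galois group $G$ of order $6$ or $12$, such a factorization into two conjugate irreducible cubics over $F=\mathbf{Q}(\omega)$ exists if and only if $\mathbf{Q}(\omega)$ is a subfield of $\mathbf{Q}(\alpha)$ (equivalently, $\mathbf{Q}(\omega)\subset K$ and $\mathrm{Gal}(K/F)$ acts on the $6$ roots with two orbits of size $3$). This is an extra arithmetic constraint that a generic $q(s)$ will not satisfy. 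Indeed, the paper's own Example~1 fails it: there $K$ is the splitting field of $x^3-x-1$, whose unique quadratic subfield is $\mathbf{Q}(\sqrt{-23})$, not $\mathbf{Q}(\omega)$, so $p(t)$ does \emph{not} factor over $\mathbf{Z}[\omega]$ and no $A\in M_3(\mathcal{O}_F)$ realizes that $p(t)$. For the $G_{12}$ case one additionally has to ensure $F$ is the fixed field of the \emph{specific} index-$2$ subgroup $\langle(135)(246),(13)(24)\rangle$ (the other two index-$2$ subgroups of $G_{12}\simeq S_3\times\mathbf{Z}/2$ act transitively on the roots, so do not give a cubic/cubic factorization). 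You acknowledge this as ``the hardest step,'' but it is not merely a matter of tuning coefficients inside an already-valid recipe: the recipe as stated does not produce any valid $p_F$ until the field constraint is imposed, and you neither prove existence nor exhibit explicit instances for either Galois group. The paper's proof closes exactly this gap by giving the two explicit polynomials and checking, in Example~1 by the Galois-descent computation and in Example~2 via Maple, that the Galois group and the Picard number come out as claimed.

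A secondary point: in your set-up $\alpha\beta\gamma=\det A$ must equal $1$ to invoke condition (AP) and Theorem~\ref{TheoremGaloisGroupProjectiveTori}. You get $p_F(0)=-1$ only after assuming $\alpha\beta\gamma=1$; a priori $p_F(0)$ could be any unit of $\mathcal{O}_F$ (a sixth root of unity), giving $A\in\mathrm{GL}_3(\mathcal{O}_F)\setminus\mathrm{SL}_3(\mathcal{O}_F)$ and $\alpha\beta\gamma$ a nontrivial root of unity, for which (AP) fails as stated and the theorem you want to quote would need a (power-of-$f$) normalization step as in the proof of Theorem~\ref{TheoremPicardNumberProjective}. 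This is fixable but should be flagged.
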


\begin{proof} 

We give two example satisfying $G=S_3$ and $G=G_{12}$.

1) Example 1: $G=S_3$. Let $z_1$, $z_2$, $z_3$ be the zeros of 
$$h(x) := t^3 - t -1\,\, ,\,\, \in {\mathbf Z}[x]\,\, .$$ 
Then $z_i$ and $1/z_i = z_i^2 -1$ are all algebraic integers. So $z_i/z_j$ ($i \not= j$) are also algebraic integers. $h(x)$ has one real root and two non-real roots. Call $z_1$ the real root, then $z_3$ is the complex conjugate of $z_2$ and $z_2 \not= z_3$.. Then $z_1 >1$ and in fact it is the smallest Pisot number. Hence none of $z_i/z_j$ ($i \not= j$) is real. By the relation of zeros and coefficients for cubic polynomials, we have $z_1z_2z_3 = 1$. Hence
$$\vert z_1 \vert > 1 > \vert z_2 \vert = \vert z_3 \vert > 0\,\, .$$
The discriminant of the cubic polynomial $h(x)$ is 
$$-4\cdot(-1)^3 - 27\cdot(-1)^2 = -23 < 0$$
and this is not a square of a rational number. It follows that 
the Galois group $G$ of $h(x)$ over ${\mathbf Q}$ is $S_3$, the symmetric group of degree $3$. In particular, $G$ doubly transitively acts on 
$\{z_1, z_2, z_3\}$. Hence $G$ acts transitively on 
$\{z_i/z_j \vert i \not= j\}$. 
Hence the polynomial
$$p(t) := \prod_{i \not= j} (t - \frac{z_i}{z_j})=t^6+3t^5+5t^4+5t^3+5t^2+3t+1$$
is irreducible. It can be checked easily that $p(t)$ has zeros $${\mathcal S} := \{\alpha\,\, ,\,\, \beta\,\, ,\,\, \gamma\,\, , \overline{\alpha}\,\, ,
\,\, \overline{\beta}\,\, ,\,\, \overline{\gamma}\,\,\}\,\, ,$$
such that none of them are real and
$$\vert \alpha \vert > \vert \beta \vert = 1 > \vert \gamma \vert\,\, .$$

It remains to show the projectivity of $X$. We order the zeros of $p(t)$ as $\beta ,\overline{\beta},\alpha ,\overline{\gamma},\gamma ,\overline{\alpha}$ such that the condition $(AP)$ is satisfied. Let $K$ be the minimal splitting field of $p(t)$. We first show that the Galois group $G$ of $K/{\mathbf Q}$ is the group $S_3$ given in Theorem \ref{TheoremGaloisGroupProjectiveTori}. In fact, by Theorem \ref{TheoremGaloisGroupProjectiveTori} it suffices to show that $G$ is not the group $G_{12}$. Assume otherwise that $G=G_{12}$. Then by the computation in Lemma \ref{LemmaOrbitGroupActions}, $\{\alpha \overline{\alpha},\alpha \overline{\beta},\overline{\alpha}\beta , \beta \overline{\gamma},\gamma \overline{\beta},\gamma \overline{\gamma} \}$ is a $G$-orbit. Note that all of the numbers in $\{\alpha \overline{\alpha},\alpha \overline{\beta},\overline{\alpha}\beta , \beta \overline{\gamma},\gamma \overline{\beta},\gamma \overline{\gamma} \}$ are distinct: First, by using the absolute values of these numbers we see that the $4$ numbers $\alpha \overline{\alpha},\alpha \overline{\beta}, \beta \overline{\gamma},\gamma \overline{\gamma}$ are distinct, and then it follows easily that all the $6$ numbers are distinct since they are in a $G$-orbit. We conclude that the minimal polynomial of $\alpha \overline{\alpha}$ has degree $6$. But this gives a contradiction since $\alpha \overline{\alpha}=z_1^3$, and the minimal polynomial of $z_1^3$ has degree $3$. Therefore we showed that $G=S_3$. 

The computation in Lemma \ref{LemmaOrbitGroupActions} shows that $\{\alpha \overline{\alpha},\alpha \overline{\beta},\overline{\alpha}\beta\} , \{\beta \overline{\gamma},\gamma \overline{\beta},\gamma \overline{\gamma} \}$ are $G$-orbits.

Consider the natural action of $f$ and the $G$-action on
$$H^2(X, K) = H^2(X, {\mathbf Z}) \otimes K = (\wedge^2 H_1(X, {\mathbf Z})) 
\otimes K)^* = \wedge^2 ((H_1(X, {\mathbf Z}) \otimes K))^*$$
naturally induced by the action on $K$. Note that the eigenvalues of $f$ are in $K$. We denote by $W_K(\delta) \subset H^2(X, K)$, the eigenspace of the action of $f$ with eigenvalues $\delta$ over $K$. Note that 
$$\beta \overline{\beta}=\alpha \overline{\gamma}=\overline{\alpha}\gamma=1\,\, .$$
Then 
$$W = (W_K(\alpha \overline{\alpha}) \oplus W_K(\gamma \overline{\beta}) \oplus W_K(\beta\overline{\gamma}) \oplus W_K(\gamma \overline{\gamma}) \oplus W_K(\alpha \overline{\beta}) \oplus W_K(\overline{\alpha}\beta)) \oplus W_K(1)$$
is $f$-stable $K$-linear subspace of $H^2(X, K)$ of dimension $9$. (To see this we only need to check that none of the numbers $\alpha \beta ,\alpha \gamma , \beta \gamma$, $\overline{\alpha}\overline{\beta},\overline{\alpha}\overline{\gamma},\overline{\beta}\overline{\gamma}$ can be of the form $\zeta _i\overline{\zeta _j}$ for some $\zeta _i,\zeta _j\in \{\alpha ,\beta ,\gamma\}$. For example, if $\alpha \beta$ has this form, then by considering the absolute values, we conclude that $\alpha\beta$ must be either $\alpha \overline{\beta}$ or $\overline{\alpha}\beta$. But none of these are possible since both $\alpha$ and $\beta$ are non-real.)

By the description of the action of $G$ on $K$ in Theorem \ref{TheoremGaloisGroupProjectiveTori}, $W$ is also $G$-stable $K$-linear subspace of $H^2(X, K)$. Hence, by the theory of Galois descent (see e.g. Gille-Szamuelly \cite{gille-szamuelly}), there is a ${\mathbf Q}$-linear subspace $Q$ of $H^2(X, {\mathbf Q})$ such that 
$W = Q \otimes K$. Let $N := Q \cap H^2(X, {\mathbf Z})$. Then $N$ is a primitive ${\mathbf Z}$-submodule of rank $9$ of $H^2(X, {\mathbf Z})$ such that 
$$W = N \otimes K\,\, .$$
Then we have 
$$N \otimes {\mathbf C} = W \otimes {\mathbf C} = (W(\alpha \overline{\alpha}) \oplus W(\gamma \overline{\beta}) \oplus W(\beta\overline{\gamma}) \oplus W(\gamma \overline{\gamma}) \oplus W(\alpha \overline{\beta}) \oplus W(\overline{\alpha}\beta)) \oplus W(1)\,\, ,$$
where $W(\delta) \subset H^2(X, {\mathbf C})$ is the eigenspace of $f$ with eigenvalue $\delta$ over ${\mathbf C}$. 

Recall that our $X$ is $X = (V(\alpha) \oplus V(\beta) \oplus V(\gamma))/L$. 
Let $z_1$, $z_2$, $z_3$ be the global linear coordinate of $X$ corresponding 
to $V(\alpha)$, $V(\beta)$, $V(\gamma)$ respectively. Then, by the description of the action of $f$ on $H_1(X, {\mathbf Z})$, we see that each direct summand of the above decomposition is in $H^{1, 1}(X, {\mathbf C})$. For instance,  
$$W(\alpha \overline{\alpha}) = {\mathbf C} dz_1 
\wedge d\overline{z}_1\,\, ,$$ 
$$W(1) = {\mathbf C} \langle dz_2 \wedge d\overline{z}_2\,\, ,\,\, dz_1 \wedge d\overline{z}_3\,\, ,\,\, dz_3 \wedge d\overline{z}_1\, \rangle\,\, .$$
Hence
$$N \otimes {\mathbf C} = W \otimes {\mathbf C} = H^{1,1}(X, {\mathbf C})$$
by ${\rm rank}\, N = {\rm dim}\, H^{1,1}(X, {\mathbf C}) = 9$. Thus, again by the Galois descent applied for the extension ${\mathbf C}/{\mathbf R}$, we have
$$N \otimes {\mathbf R} = H^{1,1}(X, {\mathbf R})\,\, .$$
On one hand, the K\"ahler cone in $H^{1,1}(X, {\mathbf R})$ is open. On the other hand, $N \otimes {\mathbf Q}$ is dense in $N \otimes {\mathbf R}$. Thus, by the equality above, there is an element of $N$ which is also in the K\"ahler cone of $X$, that is, there is an integral K\"ahler class of $X$. Hence by the fundamental theorem of Kodaira, $X$ is projective as claimed. 

2) Example 2: We let $q(t)=t^3-5t^2+10t-1$. Using Maple, we can check that the discriminant of $q(t)$ is $<0$, $q(2)q(-2)<0$, and $q(0)q(1)q(-1)\not= 0$. Let $p(t)=t^3q(t+\frac{1}{t})$ $=t^6-5t^5+13t^4-11t^3+13t^2-5t+1$. Then $p(t)$ is irreducible and all of its solutions are non-real. By Maple we can check that the Galois group of $p(t)$ has order $12$. We can also check that $p(t)$ has $3$ roots $\alpha ,\beta ,\gamma$ with $|\alpha |>|\beta |=1>|\gamma |$ and $\alpha \beta \gamma =1$. If we order the roots of $p(t)$ as $\beta ,\overline{\beta},\alpha ,\overline{\gamma},\gamma ,\overline{\alpha}$, then Theorem \ref{TheoremGaloisGroupProjectiveTori} gives that $G$ is exactly the group $G_{12}$. Lemma \ref{LemmaOrbitGroupActions} shows that $\{\alpha \overline{\alpha},\alpha \overline{\beta},\overline{\alpha}\beta ,\beta \overline{\gamma},\overline{\beta}\gamma ,\gamma \overline{\gamma}\}$ is a $G$-orbit. If we construct the torus $X$ using the roots $\alpha ,\beta ,\gamma $ of $p(t)$, then we can show that $X$ is projective and has Picard number $9$ and the induced automorphism $f:X\rightarrow X$ satisfies the desired properties.  

\end{proof}

Conversely, if $X$ is a projective $3$-torus having $f:X\rightarrow X$ is an automorphism with $\lambda _1(f)=\lambda _2(f)>1$ but $f$ has no non-trivial equivariant holomorphic fibration, then the Picard number of $X$ must be $9$.
\begin{theorem}
Let $X$ be a projective $3$-torus having an automorphism $f:X\rightarrow X$ with $\lambda _1(f)=\lambda _2(f)>1$ but $f$ has no non-trivial equivariant holomorphic fibration. Then the Picard number of $X$ is $9$.
\label{TheoremPicardNumberProjective}\end{theorem}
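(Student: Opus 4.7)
The plan is to mimic, in reverse, the Galois-descent construction used in the proof of Theorem \ref{TheoremProjective3Torus}: I would exhibit a primitive $\mathbf{Z}$-submodule $N \subset H^2(X, \mathbf{Z})$ of rank $9$ with $N \otimes_{\mathbf{Z}} \mathbf{C} = H^{1,1}(X, \mathbf{C})$. Since $\dim_{\mathbf{C}} H^{1,1}(X) = 9$ on a complex $3$-torus, this immediately yields $\rho(X) = 9$.

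By Remark \ref{Special} 3), the characteristic polynomial $\Phi(t) \in \mathbf{Z}[t]$ of $f_{*}\vert L$ is a special polynomial of degree $6$; label its roots according to condition (A) so that $\beta = \zeta_1$, $\alpha = \zeta_3$, $\gamma = \zeta_5$ with $|\alpha|>1=|\beta|>|\gamma|$. By Lemma \ref{basic} 5), projectivity of $X$ ensures that $\omega := \alpha\beta\gamma$ is a root of unity. I would first show that, by a suitable choice of labeling within each conjugate pair $\{\zeta_{2i-1},\zeta_{2i}\}$, condition (AP), i.e., $\zeta_1\zeta_3\zeta_5 = 1$, can be arranged. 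Then Theorem \ref{TheoremGaloisGroupProjectiveTori} identifies the Galois group $G$ of the splitting field $K/\mathbf{Q}$ of $\Phi(t)$ as being either $H_6 \simeq S_3$ or $G_{12}$ inside $\mathrm{Aut}_{\mathrm{set}}(\mathcal{S}_1) = S_6$.

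With the Galois group so determined, the orbit decompositions of Lemma \ref{LemmaOrbitGroupActions} show that in either case, the nine $(1,1)$-eigenvalues of $f^*$ on $H^2(X,\mathbf{C})$, those of the form $\zeta_i\bar\zeta_j$ with $i \in \{1,3,5\}$ and $j \in \{2,4,6\}$, form a union of complete $G$-orbits (namely $Z_1 \cup Z_2 \cup Z_3$ in the $H_6$ case, and $Z_1 \cup Z_3$ in the $G_{12}$ case, in the notation of Lemma \ref{LemmaOrbitGroupActions}) disjoint from the orbit accounting for the six $(2,0)$- and $(0,2)$-eigenvalues. Let $W \subset H^2(X,K)$ be the direct sum of the corresponding nine one-dimensional $f^*$-eigenspaces; it is both $f^*$-stable and $G$-stable by construction. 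By Galois descent (cf.\ \cite{gille-szamuelly}), $W = Q \otimes_{\mathbf{Q}} K$ for a unique $9$-dimensional $\mathbf{Q}$-subspace $Q \subset H^2(X,\mathbf{Q})$, and setting $N := Q \cap H^2(X,\mathbf{Z})$ produces the desired primitive rank-$9$ submodule. Since each summand eigenspace lies in $H^{1,1}(X, \mathbf{C})$ by Lemma \ref{basic} 4) together with the Hodge type of its eigenvalue, we have $N \otimes_{\mathbf{Z}} \mathbf{C} \subseteq H^{1,1}(X, \mathbf{C})$; the matching dimensions force equality, and the Picard number conclusion follows after a further descent from $\mathbf{C}$ to $\mathbf{R}$ identifying $N \otimes \mathbf{R}$ with $H^{1,1}(X,\mathbf{R})$.

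The crucial and most delicate step is the opening one, arranging condition (AP). While Lemma \ref{basic} 5) yields only that $\omega \in \mu_\infty$, the required statement is that one of the eight products $\alpha^{\pm 1}\beta^{\pm 1}\gamma^{\pm 1}$ equals $1$ exactly. Each such product is $\omega^{\pm 1}$ times a ratio of the form $\bar{x}/x$ for some $x \in \{\alpha,\beta,\gamma\}$; the ratio $\bar\beta/\beta = 1/\beta^2$ cannot be a root of unity (else $\beta$ itself would be, contradicting the fact that $\Phi(t)$ is not cyclotomic, possessing the root $\alpha$ with $|\alpha|>1$), while the ratios $\bar\alpha/\alpha$ and $\bar\gamma/\gamma$ need to be controlled using the transitivity of $G$ on $\mathcal{S}_1$ together with the irreducibility of $\Phi(t)$.
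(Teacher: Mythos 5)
Your overall strategy — recognize $\Phi(t)$ as a special polynomial, pin down the Galois group via Theorem \ref{TheoremGaloisGroupProjectiveTori}, read off the $(1,1)$-orbit structure from Lemma \ref{LemmaOrbitGroupActions}, and produce a rank-$9$ primitive sublattice of $NS(X)$ by Galois descent — is exactly the one the paper runs, and the descent portion of your sketch is sound. The difficulty, as you yourself flag, is the opening step of securing condition~(AP), and that is where your argument genuinely breaks down. The paper does \emph{not} try to arrange $\alpha\beta\gamma=1$ by relabeling roots within conjugate pairs; instead, invoking Proposition~14.5 of \cite{Ue}, it \emph{replaces $f$ by a suitable iterate $f^m$}, which kills the root of unity $\omega=\alpha\beta\gamma$ outright and gives $\alpha\beta\gamma=1$ on the nose. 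Your relabeling route requires one of the eight products $\alpha^{\pm1}\beta^{\pm1}\gamma^{\pm1}$ to equal~$1$, i.e.\ that $\omega=1$ already, or that $\bar x/x=\omega^{\mp1}$ for some $x\in\{\alpha,\beta,\gamma\}$. You correctly eliminate $x=\beta$ (since $\bar\beta/\beta=\beta^{-2}$ cannot be a root of unity), but there is no reason $\bar\alpha/\alpha$ or $\bar\gamma/\gamma$ should be a root of unity of precisely the order of $\omega$, and you give no mechanism — Galois-theoretic or otherwise — to force this. As stated, the step does not close, so the proof does not go through without borrowing the iterate trick.

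One further caution, independent of the gap above: if you \emph{do} switch to the paper's iterate argument, you should check (or at least note) that after passing from $f$ to $f^m$ the hypotheses you intend to reuse still hold — in particular that the characteristic polynomial of $(f^m)_*$ remains irreducible (equivalently, by Theorem \ref{DoubleFibration}~2), that $X$ has no $f^m$-equivariant fibration), since Theorem \ref{TheoremGaloisGroupProjectiveTori} is stated only for special (hence irreducible) polynomials. This is implicit in the paper's one-line proof, but it is the kind of point worth making explicit if you are reconstructing the argument from scratch.
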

\begin{proof}
By Proposition 14.5 in \cite{Ue}, after replacing $f$ by an iterate, we may assume that $f^*:H^{1,0}(X)\rightarrow H^{1,0}(X)$ has eigenvalues $\alpha ,\beta ,\gamma $ with $|\alpha |>|\beta |=1>|\gamma |$ and $\alpha \beta \gamma =1$. Then we list the roots of the characteristic polynomial $p(t)$ of $f^*:H^1(X,\mathbf{Z})\rightarrow H^1(X,\mathbf{Z})$ in the following order $\beta ,\overline{\beta},\alpha ,\overline{\gamma}, \gamma ,\overline{\alpha}$. By Theorem \ref{TheoremGaloisGroupProjectiveTori}, we see that the Galois group of the splitting field of $p(t)$ over $\mathbf{Q}$ is either $S_3$ or $G_{12}$. Then the argument in the proof of Theorem \ref{TheoremProjective3Torus} concludes that $X$ has Picard number $9$.
\end{proof}

\subsection{Non-projective examples}

We first show that the Picard numbers of non-projective examples must be $0$ or $3$, then show that these two cases can be realized.

\begin{theorem}
Let $X$ be a complex $3$-torus having an automorphism $f:X\rightarrow X$ with $\lambda _1(f)=\lambda _2(f)>1$ yet $f$ has no non-trivial equivariant holomorphic fibration. If $X$ is non-projective then the Picard number of $X$ is $0$ or $3$.
\label{TheoremPicardNumberNonProjective}\end{theorem}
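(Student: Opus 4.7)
The plan is to compute $\rho(X)$ by Galois descent on $H^2(X,K)$, parallel to the proof of Theorem \ref{TheoremPicardNumberProjective} but with the subtler orbit analysis demanded by non-projectivity. First, Theorem \ref{DoubleFibration}~2) and Remark \ref{Special}~3) will show that the characteristic polynomial $\Phi(t)$ of $f_{*}\vert L$ is a special polynomial, irreducible with all roots $\mathcal{S}_1 = \{\alpha,\beta,\gamma,\overline\alpha,\overline\beta,\overline\gamma\}$ non-real and $\vert\alpha\vert>\vert\beta\vert=1>\vert\gamma\vert$. Let $K$ denote the splitting field and $G := \mathrm{Gal}(K/\mathbb{Q})$. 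Since $G$ acts transitively on $\mathcal{S}_1$ and $\beta\overline\beta=1$, picking $\sigma\in G$ with $\sigma(\beta)=\alpha$ forces $\sigma(\overline\beta)=1/\alpha\in\mathcal{S}_1$; by the modulus constraint, either (AR) $\alpha\overline\gamma=1$ or its twin (AR$'$) $\alpha\gamma=1$ must hold, so $\Phi$ is always reciprocal.

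Next I would set up the orbit-counting. Using an eigenvector basis $\{v_\zeta\}_{\zeta\in\mathcal{S}_1}$ of $H^1(X,K)$ with $v_\zeta\in H^{1,0}$ for $\zeta\in\{\alpha,\beta,\gamma\}$, the wedges $e_I := v_\zeta\wedge v_{\zeta'}$ diagonalize $f^*$ on $H^2(X,K)$, indexed by the $15$-element set $P := \binom{\mathcal{S}_1}{2}$, and $e_I\in H^{1,1}$ precisely when $I$ is \emph{mixed}, meaning it contains one element from each of $\{\alpha,\beta,\gamma\}$ and $\{\overline\alpha,\overline\beta,\overline\gamma\}$. Writing $P^{\mathrm{mix}}$ for the $9$-element set of mixed pairs, Galois descent will give
\[
\rho(X) \;=\; \sum_{\substack{O\;G\text{-orbit on }P\\ O\subset P^{\mathrm{mix}}}}\,\lvert O\rvert,
\]
the essential point being that any $G$-orbit $O$ meeting both $P^{\mathrm{mix}}$ and its complement contributes $0$: a hypothetical Galois-invariant element supported on $O\cap P^{\mathrm{mix}}$ would, under a $\sigma\in G$ moving those components into $P\setminus P^{\mathrm{mix}}$, acquire a non-mixed coefficient incompatible with Hodge type $(1,1)$.

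For case (AR), Theorem \ref{TheoremGaloisGroupNonProjectiveTori} classifies $G$ up to $S_6$-conjugacy as one of $G_{48},G_{24},H_{24},G_{12},H_6$. I would first dispose of $G\in\{H_6,G_{12}\}$ by an orbit enumeration parallel to Theorem \ref{TheoremProjective3Torus}'s proof, which yields $\rho(X)=9$ and hence $X$ projective by Kodaira, a contradiction. For $G\in\{G_{48},G_{24},H_{24}\}$, the $G$-orbit of $\{\beta,\overline\beta\}$ equals $\{\{\beta,\overline\beta\},\{\alpha,\overline\gamma\},\{\overline\alpha,\gamma\}\}$, all mixed of size $3$ and contributing $3$; direct enumeration with the generators of Theorem \ref{TheoremGaloisGroupNonProjectiveTori} should then verify that every other $G$-orbit picks up a non-mixed pair such as $\{\alpha,\beta\}$, contributing $0$. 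This will give $\rho(X)=3$.

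For case (AR$'$), the reciprocal pairs are $\{\beta,\overline\beta\},\{\alpha,\gamma\},\{\overline\alpha,\overline\gamma\}$, of which the last two are non-mixed, so the orbit of $\{\beta,\overline\beta\}$ automatically absorbs them and contributes $0$. A parallel wreath-product analysis---adapting Theorem \ref{TheoremGaloisGroupNonProjectiveTori} to the (AR$'$) pairing---should give a small list of possibilities for $G$, and for each the only remaining candidate for an all-mixed orbit is the one containing $\{\alpha,\overline\alpha\}$: it is all-mixed of size $3$ precisely when $\vert\alpha\vert^2$ and $1$ are not Galois-conjugate in $K$ (equivalently, no $\sigma\in G$ sends $(\alpha,\overline\alpha)$ to $(\beta,\overline\beta)$ in a conjugation-preserving way), yielding $\rho(X)=3$; otherwise this orbit merges with $\{\alpha,\gamma\}$ and contributes $0$, yielding $\rho(X)=0$. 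Both subcases should occur, matching Theorem \ref{TheoremExamples}~2). The main obstacle will be this last step: setting up the (AR$'$)-analogue of the Galois classification and checking case-by-case that no further all-mixed orbits exist beyond those involving $\{\beta,\overline\beta\}$ and $\{\alpha,\overline\alpha\}$, so that the dichotomy is genuinely between $\rho(X)=0$ and $\rho(X)=3$.
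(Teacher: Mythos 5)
Your overall strategy matches the paper's: use Galois descent on $H^2(X,K)$ to compute $\rho(X)$ as the sum of sizes of $G$-orbits consisting entirely of ``mixed'' wedges, and split the analysis by the reciprocal pairing type and the Galois group. But there is a genuine gap at the step where you ``dispose of $G\in\{H_6,G_{12}\}$ by an orbit enumeration... which yields $\rho(X)=9$.'' Theorem~\ref{TheoremGaloisGroupNonProjectiveTori} classifies $G$ only up to conjugacy in $S_6$, whereas the orbit decomposition of the mixed/non-mixed pairs depends on the actual embedding $G \subset S_6$ relative to the fixed Hodge labeling $\{\alpha,\beta,\gamma\}$ versus $\{\overline\alpha,\overline\beta,\overline\gamma\}$. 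Concretely, with the paper's $\zeta$-indexing (under (AR), with $\zeta_3=\alpha$, $\zeta_4=\overline{\gamma}$), the group $H_6' := \langle (146)(235),\, (12)(36)(45)\rangle$ is a transitive $S_3$ inside the wreath product preserving the reciprocal pairing and containing the complex conjugation, so it is a legitimate candidate for $G$ in case (AR); yet a direct computation shows only the orbit of $\{\zeta_1,\zeta_2\}$ is all-mixed, giving $\rho(X)=3$, not $9$. So conjugacy class alone does not pin down $\rho$. The paper's proof addresses exactly this: in Case~1 ($G\cong S_3$) it analyzes the explicit $3$-cycle $\sigma$ (splitting into $\sigma_1,\dots,\sigma_4$) rather than just the abstract group, and in Case~2 it tracks the conjugating element $\kappa$ explicitly, checking where $\kappa Z_1$ and $\kappa Z_2$ land and exploiting the fact that the three singleton orbits $Z_3,Z_4,Z_5$ (corresponding to products equal to $1$) are $\kappa$-independent.

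A second, smaller issue is that your (AR$'$) branch is left as ``a parallel wreath-product analysis ... should give a small list.'' The paper's Subcase~2.2 works through this in full, with a sub-possibility tree ruling out several candidate orbit configurations via explicit $3$-cycles and Sylow's theorem, and that case-by-case check (which you correctly identify as ``the main obstacle'') is where most of the actual work of the proof lives; as written, the proposal would not compile into a complete argument without supplying it.
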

\begin{proof}
Let $p(t)$ be the characteristic polynomial of $f^*:H^1(X,\mathbb{Z})\rightarrow H^1(X,\mathbb{Z})$. By Remark \ref{StadardRemark},  if we choose an appropriate order for the roots of $p(t)$, then $X$ is one of the examples constructed in Proposition \ref{Standard}.  Let $G$ be the Galois group over $\mathbf{Q}$ of $p(t)$. Then from Theorem \ref{TheoremGaloisGroupNonProjectiveTori}, $G$ is conjugate in $S_6$ to $H$, where $H$ is one of the groups $G_{48}$, $G_{24}$, $H_{24}$, $G_{12}$, $S_3$. Note that either $H=S_3$, or it contains $G_{12}$, or it contains $G_{24}$. Recall that $\tau =(\alpha \overline{\alpha})(\beta \overline{\beta})(\gamma \overline{\gamma})\in G$ is the complex conjugation.  

1) First consider the case $H=S_3$, hence $G$ is isomorphic to $S_3$. By Sylow's theorem, there is an element $\sigma$ of order $3$ of $G$, and $G$ is generated by $\sigma$ and the complex conjugation $\tau$. After composing on the left or right with the complex conjugation or replacing $\sigma $ with $\sigma ^2$, we may assume that $\sigma$ is one of the following $\sigma _1=(\beta \alpha \gamma )(\overline{\beta}\overline{\gamma}\overline{\alpha})$, $\sigma _2=(\beta \alpha \overline{\gamma})(\overline{\beta}\gamma \overline{\alpha})$, $\sigma _3=(\beta \alpha \overline{\alpha})(\overline{\beta} \gamma \overline{\gamma})$, and $\sigma _4=(\beta \alpha \overline{\alpha})(\overline{\beta}\overline{\gamma}\gamma )$.  

In the case $\sigma =\sigma _1$, we can argue as in Theorem \ref{TheoremProjective3Torus} to see that $X$ has Picard number $9$ and hence must be projective. But we assumed that $X$ is non-projective hence this case can not happen. 

In the case $\sigma =\sigma _2$, we find that the $G$-orbits are $Z_1=\{\alpha \overline{\alpha},\overline{\gamma}\overline{\beta},\beta \gamma\}$, $Z_2=\{\gamma \overline{\gamma},\beta \overline{\alpha},\overline{\beta}\alpha \}$, $Z_3=\{\alpha \beta ,\alpha \overline{\gamma},\beta \overline{\gamma},\overline{\alpha}\overline{\beta}, \overline{\alpha}\gamma ,\overline{\beta}\gamma \}$, and $Z_4=\{\beta \overline{\beta},\alpha \gamma ,\overline{\alpha}\overline{\gamma}\}$. From this, argue as in the proof of Theorem \ref{TheoremNonProjectiveTorus}, we find that the maximal $G$-stable set all of its numbers have the form $\lambda _i\overline{\lambda j}$ is $\{\gamma \overline{\gamma},\beta \overline{\alpha},\overline{\beta}\alpha ,\beta \overline{\beta}\}$. None of these $4$ numbers can belong to $Z_1$, $Z_2$ or $Z_3$. Hence $X$ has Picard number $\leq 4$. We next show that the Picard number of $X$ is exactly $3$. In fact, let $dz_1$, $dz_2$, $dz_3$, $d\overline{z_1}$, $d\overline{z_2}$, $d\overline{z_3}$ be the eigenvectors of $f^*:H^1(X,K)\rightarrow H^1(X,K)$ with eigenvalues $\beta$, $\alpha$, $\gamma$, $\overline{\beta}$, $\overline{\alpha}$, $\overline{\gamma}$. Since $\sigma =(\beta \alpha \overline{\gamma})(\overline{\beta}\gamma\overline{\alpha})$ it follows that $\sigma $ acts on the eigenvectors as follows $\{Kdz_1,Kdz_2,Kd\overline{z_3}\},~\{Kd\overline{z_1},Kdz_3,Kd\overline{dz_2}\}$. Therefore it follows that $\{Kdz_1\wedge d\overline{z_1},Kdz_2\wedge dz_3,Kd\overline{z_3}\wedge d\overline{z_2}\}$ is a $\sigma$-orbit, and in fact is a $G$-orbit (e.g. by considering the eigenvalues, which are all $1$). Hence $Kdz_1\wedge d\overline{z_1}$ is not in $NS_K(X)$, which implies that the Picard number of $X$ is $\leq 3$. It remains to show that the Picard number of $X$ is $\geq 3$. To see this we consider the $K$-vector space $W=W_K(\gamma \overline{\gamma})\oplus W_K(\beta \overline{\alpha} )\oplus W_K(\overline{\beta}\alpha )$. This vector space is $G$-stable, hence by the theory of Galois descend we have that there is $W'\in H^2(X,\mathbf{Q})$ such that $W=W'\otimes K$. Moreover, $W_{\mathbf{C}}=W\otimes _K\mathbf{C}$ is in $H^{1,1}(X)$. Therefore $W_{\mathbf{C}}\subset NS_{\mathbf{C}}(X)$. This implies that $NS_{\mathbf{C}}$ has Picard number at least $3$, and hence it has Picard number exactly $3$. 

The case $\sigma =\sigma _3$ can not happen, because $\sigma \circ \tau$ is the permutation $(\alpha \beta \gamma \overline{\beta}\alpha )$ thus can not belong to a group of order $6$. Similarly the case $\sigma =\sigma _4$ can not happen. 

Therefore in case 1) we have the Picard number of $X$ is $3$.

2) $G$ is not isomorphic to $S_3$. We consider two subcases:

a) Subcase 2.1: $\alpha \overline{\gamma}=\gamma\overline{\alpha}$. We order the roots as follows $\zeta _1=\beta$, $\zeta _2=\overline{\beta}$, $\zeta _3=\alpha $, $\zeta _4=\overline{\gamma}$, $\zeta _5=\gamma$, $\zeta _6=\overline{\alpha}$. 

We divide still into two further subcases:

Subcase 2.1.1: $G=\kappa H\kappa ^{-1}$ with $H<S_6$ contains $G_{12}$. In this case from Lemma \ref{LemmaOrbitGroupActions}, the $G_{12}$-orbits are

\begin{eqnarray*}
Z_1&=&\{\alpha\overline{\alpha},\gamma\overline{\beta},\beta\overline{\alpha},\overline{\gamma}\gamma,\beta\overline{\gamma},\overline{\beta}\alpha\},\\
Z_2&=&\{\beta\alpha,\alpha\gamma,\beta\gamma,\overline{\alpha}\overline{\beta},\overline{\alpha}\overline{\gamma},\overline{\beta}\overline{\gamma}\},\\
Z_3&=&\{\beta \overline{\beta}\}, Z_4=\{\alpha\overline{\gamma}\}, Z_5=\{\gamma \overline{\alpha}\}.
\end{eqnarray*}

Then any $G$-stable set $S$  must contain one of the following $\kappa Z_1$, $\kappa Z_2$, $Z_3$, $Z_4$, $Z_5$ (since the three numbers in $Z_3,Z_4,Z_5$ are $1$, these are $G$-orbits). Note that no element of $Z_1$ can belong to $Z_2$ and vice versa. Therefore no element of $\kappa Z_1$ can belong to $\kappa Z_2$ and vice versa. 

If both $\kappa Z_1$ and $\kappa Z_2$ belong to a same $G$-orbit, then argue as in 1) we see that $X$ has Picard number $3$. 

Consider now the case $\kappa Z_1$ and $\kappa Z_2$ are different $G$-orbits. If $\kappa Z_1\cap Z_2=\emptyset$ then $\kappa Z_1 =Z_1$, then using the $G$-stable set $Z_1,Z_3,Z_4,Z_5$ and arguing as in the proof of Theorem \ref{TheoremProjective3Torus}, we conclude that $X$ has Picard number $9$ and is projective which contradicts to our assumption. Similarly, if $\kappa Z_2\cap Z_2=\emptyset$ then we obtain a contradiction. Thus we must have $\kappa Z_i\cap Z_2\not= \emptyset$ for $i=1,2$. This implies that for any $G$-stable set $S$ which contains only numbers of the form $\lambda _i\overline{\lambda _j}$  must belong to the union of $Z_3,Z_4,Z_5$, and argue as in 1) we see that $X$ has Picard number $3$.

Subcase 2.1.2: $G=\kappa H\kappa ^{-1}$ with $H<S_6$ contains $G_{24}$. In this case we can argue as in the above to find that $X$ has Picard number $3$.

b) Subcase 2.2: $\alpha \gamma =\overline{\gamma}\overline{\alpha}=1$. Here we switch the roles of $\gamma$ and $\overline{\gamma}$ in the Subcase 2.1, and find the corresponding further subcases of Subcase 2.2

+ Subcase 2.2.1: $G=\kappa H\kappa ^{-1}$ with $H<S_6$ contains $G_{12}$. The $G_{12}$-orbits are 
\begin{eqnarray*}
Z_1&=&\{\alpha\overline{\alpha},\overline{\gamma}\overline{\beta},\beta\overline{\alpha},\overline{\gamma}\gamma,{\beta}{\gamma},\overline{\beta}\alpha\},\\
Z_2&=&\{\beta\alpha,\alpha\overline{\gamma},\beta\overline{\gamma},\overline{\alpha}\overline{\beta},\overline{\alpha}{\gamma},\overline{\beta}{\gamma}\},\\
Z_3&=&\{\beta \overline{\beta}\}, Z_4=\{\alpha{\gamma}\}, Z_5=\{\overline{\gamma} \overline{\alpha}\}.
\end{eqnarray*}

Again we find that $Z_1\cap Z_2=\emptyset$. Therefore $\kappa Z_1\cap \kappa Z_2=\emptyset$. Also, if $\kappa Z_1$ and $\kappa Z_2$ belong to the same $G$-orbit then we see that $X$ has Picard number $\leq 1$. Now we assume that $\kappa Z_1$ and $\kappa Z_2$ are different $G$-orbits. We can also assume that either $\kappa Z_1\cap \{\alpha \beta ,\overline{\alpha}\overline{\beta},\alpha \gamma , \overline{\alpha}\overline{\gamma}\}=\emptyset$ or $\kappa Z_2\cap \{\alpha \beta ,\overline{\alpha}\overline{\beta},\alpha \gamma , \overline{\alpha}\overline{\gamma}\}=\emptyset$, since otherwise if $S$ is a $G$-stable set whose any number is of the form $\lambda _i\overline{\lambda _j}$ then $S\cap \kappa Z_1=S\cap \kappa Z_2=\emptyset$, and hence $S$ has at most $1$ element $\beta \overline{\beta}$. Argue as in 1) we see that $X$ has Picard number $0$.

Thus we now consider two possibilities: 

i) Possibility 1: $\kappa Z_2\cap \{\alpha \beta ,\overline{\alpha}\overline{\beta},\alpha \gamma , \overline{\alpha}\overline{\gamma}\}=\emptyset$. It then follows that $\{\alpha \beta ,\overline{\alpha}\overline{\beta},\alpha \gamma , \overline{\alpha}\overline{\gamma}\}$ is contained in $\kappa Z_1$. Since any number in $\{\alpha ,\overline{\alpha},\beta ,\overline{\beta},\gamma ,\overline{\gamma}\}$ appears exactly twice as a factor of a number in $Z_1$, the same is true for $\kappa Z_1$. Thus we have two sub-possibilities:

-Sub-possibility 1.1: $\kappa Z_1=\{\alpha \beta ,\overline{\alpha}\overline{\beta},\beta \gamma ,\overline{\beta}\overline{\gamma}, \alpha \overline{\gamma},\overline{\alpha}\gamma\}$ and $\kappa Z_2=\{\alpha \overline{\alpha}, \overline{\alpha}\beta ,\overline{\gamma}\gamma ,\alpha \overline{\beta},\overline{\beta}\gamma ,\beta \overline{\gamma}\}$. We exclude this sub-possibility as follows. Let $\sigma \in G$ be an element of order $3$, which exists by Sylow's theorem. Since $\beta \overline{\beta}=\alpha \gamma =\overline{\alpha}\overline{\gamma}=1$, after composing $\sigma$ on the left and right with the complex conjugation as needed we can assume that $\sigma$ is either $\sigma _1=(\beta\alpha \overline{\alpha})(\overline{\beta}\gamma \overline{\gamma})$ or $\sigma _2=(\beta\alpha\overline{\gamma})(\overline{\beta}\gamma\overline{\alpha} )$. 

If $\sigma =\sigma _1$ then we find that $\sigma (\alpha \beta )=\alpha \overline{\alpha}$ which is a contradiction since $\alpha\beta$ and $\alpha \overline{\alpha}$ belong to different $G$-orbits. 

If $\sigma =\sigma _2$ then we find that $\sigma (\beta\gamma )=\alpha \overline{\alpha}$ and again obtain a contradiction.

Hence  sub-possibility 1.1 can not happen. 

-Sub-possibility 1.2: $\kappa Z_1=\{\alpha \beta ,\overline{\alpha}\overline{\beta},\beta \gamma ,\overline{\beta}\overline{\gamma}, \alpha \overline{\alpha},\overline{\gamma}\gamma\}$ and $\kappa Z_2=\{\alpha \overline{\gamma}, \overline{\alpha}\beta ,\overline{\alpha}\gamma ,\alpha \overline{\beta},\overline{\beta}\gamma ,\beta \overline{\gamma}\}$. We exclude this sub-possibility as follows. Let $\sigma \in G$ be an element of order $3$, which exists by Sylow's theorem. Since $\beta \overline{\beta}=\alpha \gamma =\overline{\alpha}\overline{\gamma}=1$, after composing $\sigma$ on the left and right with the complex conjugation as needed we can assume that $\sigma$ is either $\sigma _1=(\beta\alpha \overline{\alpha})(\overline{\beta}\gamma \overline{\gamma})$ or $\sigma _2=(\beta\alpha\overline{\gamma})(\overline{\beta}\gamma\overline{\alpha} )$. 

If $\sigma =\sigma _1$ we obtain the contradiction $\sigma (\alpha \overline{\alpha})=\overline{\alpha}\beta$, but these numbers belong to different $G$-orbits.

If $\sigma =\sigma _2$ we obtain the contradiction $\sigma (\gamma \overline{\gamma})=\overline{\alpha}\beta$. 

Hence sub-possibility 1.2 can not happen. 

ii) Possibility 2: $\kappa Z_1\cap \{\alpha \beta ,\overline{\alpha}\overline{\beta},\alpha \gamma , \overline{\alpha}\overline{\gamma}\}=\emptyset$. In this case we proceed as in Possibility 1. 

+ Subcase 2.2.2: $G=\kappa H\kappa ^{-1}$ with $H<S_6$ contains $G_{24}$. In this case we can argue as in the above to find that $X$ has Picard number $0$.
\end{proof}

\begin{theorem}
For $r=0$ or $3$, there is a non-projective $3$-torus $X$ of Picard number $r$ having an automorphism $f:X\rightarrow X$ with $\lambda _1(f)=\lambda _2(f)>1$, yet $f$ has no non-trivial equivariant holomorphic fibrations.
\label{TheoremNonProjectiveTorus}\end{theorem}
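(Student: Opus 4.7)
The plan is to apply the standard construction of Proposition \ref{Standard} to two explicit special polynomials $p(t) \in \mathbf{Z}[t]$, one for each value of $r \in \{0,3\}$, whose Galois groups realize the two regimes identified in the proof of Theorem \ref{TheoremPicardNumberNonProjective}. In both cases non-projectivity will be automatic from the chosen Galois structure, because some Galois conjugate of $\alpha\beta\gamma$ (the eigenvalue of $f^{*}$ on $H^{3,0}(X)$) will fail to lie on the unit circle, so $\alpha\beta\gamma$ cannot be a root of unity, and Lemma \ref{basic} 5) then forbids projectivity.

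For $r = 3$, I look for an irreducible special $p(t)$ whose Galois group $G$ is isomorphic to $S_{3}$ and such that, with the natural ordering $(\beta, \overline{\beta}, \alpha, \overline{\gamma}, \gamma, \overline{\alpha})$ of the roots, the unique order-$3$ subgroup of $G$ is generated by the $3$-cycle $\sigma_{2} = (\beta\alpha\overline{\gamma})(\overline{\beta}\gamma\overline{\alpha})$; equivalently, $\beta$, $\alpha$ and $\overline{\gamma}$ lie in a single $G$-orbit. The Picard number then equals $3$ by the Case 1 ($\sigma = \sigma_{2}$) analysis in the proof of Theorem \ref{TheoremPicardNumberNonProjective}, and non-projectivity is automatic since $\sigma_{2}(\alpha\beta\gamma) = \alpha\overline{\alpha}\cdot\overline{\gamma}$ has absolute value $|\alpha| > 1$. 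A concrete source is to start with an irreducible cubic $h(x) \in \mathbf{Z}[x]$ whose splitting field has Galois group $S_{3}$, with one real and two non-real roots, and form $p(t)$ as the minimal polynomial of a judicious algebraic combination of its roots, tuned so that the induced $3$-cycle on the roots of $p(t)$ is $\sigma_{2}$ rather than the $\sigma_{1}$ appearing in Theorem \ref{TheoremProjective3Torus}.

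For $r = 0$, I target Subcase 2.2 of the same proof: an irreducible special $p(t)$ satisfying $\alpha\gamma = \overline{\alpha}\overline{\gamma} = 1$, with Galois group $G$ conjugate in $S_{6}$ to a subgroup containing $G_{12}$ or $G_{24}$, and with the $G$-orbit decomposition on $\wedge^{2}\mathcal{S}$ realizing Sub-possibility 1.1 or 1.2 of that analysis. The relation $\alpha\gamma = 1$ gives $\alpha\beta\gamma = \beta$; since $|\beta| = 1$ but $\beta$ is a root of an irreducible non-cyclotomic sextic (no cyclotomic $\Phi_{n}(t)$ of degree $6$ satisfies (A), because all its roots have modulus $1$), $\beta$ is not a root of unity and $X$ is non-projective. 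The chosen orbit structure forces every non-zero $G$-stable rational subspace of $H^{2}(X, \mathbf{Q})$ to leave $H^{1,1}(X, \mathbf{C})$ under the Galois descent argument, yielding Picard number $0$.

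The main obstacle is the explicit exhibition of such integer sextics: the combinatorial Galois conditions, the absolute-value pattern (A), and the reciprocity structure of Subcase 2.2 in the $r = 0$ case are each delicate individually, and their simultaneous satisfaction is tight. Following the strategy of the proof of Theorem \ref{TheoremProjective3Torus}, this is handled by a finite computer search (in Maple, say) over integer sextic polynomials of small height: for each candidate I check irreducibility, compute the roots numerically to verify (A), identify the Galois group via discriminant and cubic resolvent computations in the style of Hagedorn \cite{hagedorn}, and inspect the orbit of $\alpha\beta\gamma$ to confirm non-projectivity. Any polynomial passing all checks in each regime then feeds directly into Proposition \ref{Standard} to produce the desired non-projective $3$-torus with the prescribed Picard number.
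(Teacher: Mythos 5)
Your overall architecture matches the paper's: produce a special sextic $p(t)$, feed it into the standard construction of Proposition \ref{Standard}, and read off the Picard number from the Galois-orbit analysis of Theorem \ref{TheoremPicardNumberNonProjective}. However, there are two substantive problems with your plan.

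First, for $r=0$ you say you will target ``Sub-possibility 1.1 or 1.2 of that analysis.'' But in the proof of Theorem \ref{TheoremPicardNumberNonProjective}, Sub-possibilities 1.1 and 1.2 are precisely the configurations that are \emph{shown to be impossible} (each is eliminated by exhibiting an order-$3$ element of $G$ that would have to carry an element of $\kappa Z_1$ into $\kappa Z_2$ or vice versa). The configurations in Subcase 2.2 that actually occur and yield Picard number $0$ are the earlier ones: either $\kappa Z_1$ and $\kappa Z_2$ fall in one $G$-orbit, or both $\kappa Z_i$ meet $\{\alpha\beta,\overline{\alpha}\overline{\beta},\alpha\gamma,\overline{\alpha}\overline{\gamma}\}$. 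A search tuned to 1.1 or 1.2 would terminate empty. The paper's actual $r=0$ example is completely explicit: start from $q(t)=t^{3}+t^{2}-1$, set $p(t)=t^{3}q(t+1/t)=t^{6}+t^{5}+3t^{4}+t^{3}+3t^{2}+t+1$, observe (via Maple) that its Galois group has order $48$, so $G$ conjugates to $G_{48}\supset G_{24}$ and one lands in Subcase 2.2.2, which forces Picard number $0$.

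Second, for $r=3$ your plan is to search for a fresh special sextic whose order-$3$ Galois element is $\sigma_2$ in the fixed labeling. The paper's construction is simpler and worth noting: it \emph{reuses} the very polynomial $p(t)=t^{6}+3t^{5}+5t^{4}+5t^{3}+5t^{2}+3t+1$ from the projective example of Theorem \ref{TheoremProjective3Torus}, but relabels which roots are $\alpha,\beta,\gamma$ so that condition (AP) fails, i.e.\ $\alpha\beta\gamma\neq 1$. The standard construction then produces a \emph{different} complex structure on the same lattice, and in this ordering $G$ is isomorphic to $S_3$ but is not the subgroup $H_6$ of $S_6$, so the Case 1 ($\sigma=\sigma_2$) analysis gives Picard number $3$. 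This exploitation of Remark \ref{StadardRemark}'s observation --- that the $8$ possible choices of $(\alpha,\beta,\gamma)$ from $\mathcal S$ give (a priori) $8$ standard tori from a single $p(t)$ --- is the key idea you are missing; it replaces a delicate new Galois search with a relabeling of a known example. Your non-projectivity argument via Lemma \ref{basic} 5) and the Galois orbit of $\alpha\beta\gamma$ is correct in spirit, but the paper instead deduces non-projectivity indirectly from Theorem \ref{TheoremPicardNumberProjective} (projective would force Picard number $9$). Finally, your proposal never exhibits an actual polynomial in either case; as written, it is a plan for a proof rather than a proof.
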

\begin{proof}
1) An example with Picard number $0$. Let $q(t)=t^3+t^2-1$. Since $-t^3q(1/t)=t^3-t-1$, it follows that $q(t)$ is irreducible, it has one real root $x$ and two non-real roots $y$ and $z=\overline{y}$. Moreover $|x|<2$ and $x\not= 0,\pm 1$. We let $$p(t)=t^3q(t+1/t)=t^6+t^5+3t^4+t^3+3t^2+t+1.$$

Then $p(t)$ has coefficients in $\mathbf{Z}$, $p(0)=1$, and we denote the roots of $p(t)$ by $\beta $ and $\overline{\beta}$ (the roots of $t^2-xt+1$), $\alpha ,\gamma $ (the roots of $t^2-yt+1$), and $\overline{\alpha},\overline{\gamma}$ (the roots of $t^2-zt+1$). We arrange such that $|\alpha |=|\overline{\alpha}|>|\beta |=|\overline{\beta}|=1>|\gamma |=|\overline{\gamma}|$. We can see that all of these numbers are non-real. Hence we can use the construction in Proposition \ref{Standard} for the polynomial $p(t)$ to obtain a complex $3$-torus $X$ and an automorphism $f:X\rightarrow X$ such that $\lambda _1(f)=\lambda _2(f)>1$, yet $f$ has no non-trivial equivariant holomorphic fibration. Using the software Maple we find that the Galois group $G$ of the splitting field over $\mathbf{Q}$ of $p(t)$ has order $48$. Proceed as in part 2b) of the proof of Theorem \ref{TheoremPicardNumberNonProjective}, we conclude that the Picard number of $X$ is $0$.

2) An example with Picard number $3$. We choose again the polynomial 
$$p(t) := t^6+3t^5+5t^4+5t^3+5t^2+3t+1$$
in  the proof of Theorem \ref{TheoremProjective3Torus}. Let again the zeros of $p(t)$ be $${\mathcal S} := \{\alpha\,\, ,\,\, \beta\,\, ,\,\, \gamma\,\, , \overline{\alpha}\,\, ,
\,\, \overline{\beta}\,\, ,\,\, \overline{\gamma}\,\,\}\,\, ,$$
such that
$$\vert \alpha \vert > \vert \beta \vert = 1 > \vert \gamma \vert\,\, .$$
Unlike in the proof of Theorem \ref{TheoremProjective3Torus},  we order the zeros of $p(t)$ as $\beta ,\overline{\beta},\alpha ,\overline{\gamma},\gamma ,\overline{\alpha}$ such that the condition $(AP)$ is {\it not} satisfied, i.e. here we order such that $\beta \alpha \gamma \not=1$. In this case, the Galois group of $K/\mathbf{Q}$ is not exactly $S_3$ but only isomorphic to $S_3$. The construction in Proposition \ref{Standard} for the polynomial $p(t)$ gives again a complex $3$-torus $X$ and an automorphism $f:X\rightarrow X$ such that $\lambda _1(f)=\lambda _2(f)>1$, yet $f$ has no non-trivial equivariant holomorphic fibration. In this case, part 1) of the proof of  Theorem \ref{TheoremPicardNumberNonProjective} gives that the Picard number of $X$ is $3$.

\end{proof}



\begin{thebibliography}{xx}


\bibitem{bedford-kim4}{E. Bedford and K.-H. Kim,} \textit{Pseudo-automorphisms without dimension-reducing factors,} Manuscript, 2012. An extension of this is the next paper.  

\bibitem{bedford-cantat-kim}{E. Bedford, S. Cantat and K.-H. Kim,} \textit{Pseudo-automorphisms with no invariant foliation}, arXiv: 1309.3695.

\bibitem{bedford-kim}{E. Bedford and K.-H. Kim,} \textit{Pseudo-automorphisms of $3$-space: periodicities and positive entropy in linear fractional recurrences,} arXiv: 1101.1614.







\bibitem{demailly-hwang-peternell}{J.-P. Demailly, J.-M. Hwang and T. Peternell,} \textit{Compact manifolds covered by a torus,} J. Geom. Anal. 2008 (18), 324--340.




\bibitem{diller-favre}{J. Diller and C. Favre,} \textit{Dynamics of bimeromorphic maps of surfaces,} Amer. J. Math. 123 (2001), no. 6, 1135--1169.

\bibitem{dinh-nguyen}{T.-C. Dinh and V.-A. Nguyen,} \textit{Comparison of dynamical degrees for semi-conjugate meromorphic maps,} Comment. Math. Helv. 86, (2011), no. 4, 817--840.

\bibitem{dinh-nguyen-truong}{T.-C. Dinh, V.-A. Nguyen and T. T. Truong,} \textit{On the dynamical degrees of meromorphic maps preserving a fibration,} Commun. Contemp. Math., accepted . arXiv: 1108.4792.

\bibitem{dinh-sibony10}{T-C Dinh and N. Sibony,} \textit{Une borne sup\'erieure de l'entropie topologique d'une application rationnelle,} Ann. of Math., 161, 2005, 1637--1644. 

\bibitem{dinh-sibony1}{T-C Dinh and N. Sibony,} \textit{Regularization of currents and entropy,} Ann. Sci. Ecole Norm. Sup. (4), 37 (2004), no 6, 959--971.

\bibitem{dolgachev-ortland}{I. Dolgachev and D. Ortland,} \textit{Point sets in projective spaces and theta functions,} Ast\'erisque, Vol 165 (1988).

\bibitem{fornaess-sibony2}{J. E. Fornaess and N. Sibony,} \textit{Oka's inequality for currents and applications,} Math. Ann. 301 (1995), 399--419. 

\bibitem{fornaess-sibony1}{J. E. Fornaess and N. Sibony,} \textit{Complex dynamics in higher dimensions,} Notes partially written by Estela A. Gavosto. NATO Adv. Sci. Inst. Ser. C Math. Phys. Sci. 459, Complex potential theory (Montreal, PQ, 1993), 131--186, Kluwer Acad. Publ., Dordrecht, 1994. 

\bibitem{gille-szamuelly}{P. Gille and T. Szamuelly,} \textit{Central simple algebras and Galois cohomology,} Cambridge studies in advanced mathematics, Cambridge University Press, 2006.

\bibitem{griffiths-harris}{P. Griffiths and J. Harris,} \textit{Principles of algebraic geometry,} 1978, John Wiley and Sons, Inc.

\bibitem{gromov}{M. Gromov,} \textit{On the entropy of holomorphic maps,} Enseign. Math. (2), 49 (3--4), 217--235. Manuscript (1997).

\bibitem{gross-mcmullen}{B. Gross and C. McMullen,} \textit{Automorphisms of even unimodular lattices and unramified Salem numbers,} J. Algebra 257 (2002), no. 2, pp. 269--290.

\bibitem{hagedorn}{T. Hagedorn,} \textit{General formulas for solvable sextic equations,} J. Algebra 233 (2000), 704--757.





\bibitem{nakayama-zhang}{N. Nakayama and D.-Q. Zhang, } \textit{Building blocks of étale endomorphisms of complex projective manifolds,} Proc. Lond. Math. Soc. (3) 99 (2009), no. 3, 725--756.








\bibitem{reschke}{P. Reschke,} \textit{Salem numbers and automorphisms of complex surfaces,}  Math Research Letters, to appear. arXiv:1202.5245.


\bibitem{sibony}{N. Sibony,} \textit{Dynamique des applications de $\mathbb{P}^k$,} In Dynamique et Ge'ome'trie Complexes, Lyon 1997. Panor. Synthese, vol 8, 97--185. Soc. Math. France, Paris, 1999. 

\bibitem{truong3}{T. T. Truong,}\textit{The simplicity of the first spectral radius of a meromorphic map,} arXiv:1212.1019.

\bibitem{voisin}{C. Voisin,}\textit{On the homotopy types of compact K\"ahler and complex projective manifolds,} Inventiones Math., Vol 157, Number 2 (2004), pp. 329-343.

\bibitem{Ue}{K. Ueno,} \textit{Classification theory of algebraic varieties and compact complex spaces,} Lecture Notes in Mathematics, Edited by A. Dold and B. Eckman, Vol 439, Springer-Verlag, Berlin-Heidelberg-New York, 1975.


\bibitem{yomdin}{Y. Yomdin,}\textit{Volume growth and entropy,} Israel J. Math. 57 (3), 285--300 (1987).




\end{thebibliography}
\end{document}